\def\cF{\mathcal{F}}
\def\cG{\mathcal{G}}
\newcommand{\cH}{\mathcal{H}}
\def\cL{\mathcal{L}}
\def\cM{\mathcal{M}}
\def\cO{\mathcal{O}}
\def\cR{\mathcal{R}}
\def\cS{\mathcal{S}}
\def\cT{\mathcal{T}}
\def\al{\alpha}
\def\be{\beta}
\def\de{\delta}
\def\De{\Delta}
\def\la{\lambda}
\def\Sig{{\Sigma}}
\newtheorem{thm}{Theorem}
\newtheorem{lem}[thm]{Lemma}
\newtheorem{prop}[thm]{Proposition}
\newtheorem{remark}[thm]{Remark}
\theoremstyle{definition}
\newtheorem{definition}[thm]{Definition}
\newtheorem{problem}[thm]{Problem}
\def \pxn {\frac{\partial}{\partial x_n}}
\def \ni {\noindent}
\def \pxone {\frac{\partial}{\partial x_1}}
\def \pxn {\frac{\partial}{\partial x_n}}
\def \ank {\mathbb A_n^k}
\def \ans {\mathbb A_n^s}
\def \an {\mathbb A_n}
\numberwithin{equation}{section}
\numberwithin{thm}{section}
\begin{document}

\author{L. Stolovitch\thanks{CNRS-Laboratoire J.-A. Dieudonn\'e U.M.R. 7351, Universit\'e de Nice - Sophia Antipolis, Parc Valrose 06108 Nice Cedex 02, France, email : {\tt stolo@unice.fr}. Research of L. Stolovitch was supported by ANR grant ``ANR-10-BLAN 0102'' for the project DynPDE}}

\title{Big denominators and analytic normal forms\\{\large with an appendix by M. Zhitomirskii\thanks{Department of Mathematics, Technion, 32000 Haifa, Israel, email : {\tt mzhi@techunix.technion.ac.il}. Research of M. Zhitomirskii was supported by the Israel Science Foundation grant 1383/07}}}
\maketitle
\begin{abstract}
We study the {\it regular} action of an analytic pseudo-group of transformations on the space of germs of various analytic objects of local analysis and local differential geometry. We fix a homogeneous object $F_0$ and we are interested in an analytic normal form for the whole affine space $\{F_0 + h.o.t.\}$. We prove that if the cohomological operator defined by $F_0$ has the big denominators property and if a formal normal form is well chosen then this formal normal form holds in analytic category.
We also define big denominators in systems of nonlinear PDEs and prove a theorem on local analytic solvability of systems of nonlinear PDEs with big denominators. 
Moreover, we prove that if the denominators grow ``relatively fast", but not fast enough to satisfy the big denominator property, then we have a normal form, respectively local solvability of PDEs, in a formal Gevrey category. We illustrate our theorems by explanation of known results and by new results in the problems of local classification of singularities
of vector fields, non-isolated singularities of functions, tuples of germs of vector fields, local Riemannian metrics and conformal structures.

\end{abstract}


\vfill\break

\tableofcontents


\section{Introduction}
\label{sec-introduction}

This article is mainly concerned with local analytic classification of various analytic objects, like
 tuples of vector fields, tuples of maps, Riemannian metrics, conformal structures, under the actions of various pseudo-groups  of germs of analytic objects, the simplest one is the pseudo-group of local diffeomorphisms.
 We fix ``the leading part" $F_0$ which is usually a well-understood object and which is
 homogeneous, say of degree $q$, with respect to some grading, and we work in the affine space consisting of analytic objects of the
 form $F_0 + {\mathcal V}_{>q}$, where ${\mathcal V}_{>q}$ is a neighborhood of $0$ in the space of germs of analytic objects of order greater then $q$. An element of that space will be denoted by $F_0 + h.o.t.$.
  A natural way to compare $f$ and  $F_0$ is to construct a normal form $\mathcal N\subset \{F_0 + h.o.t.\}$, defined by $F_0$ and  serving for
  the whole affine space  $\{F_0 + h.o.t.\}$. Constructing a formal normal form (on the level of formal power series)
  can be reduced to linear algebra. A formal normal form is enough for certain applications, but
  in many cases we need an analytic normal form and one should deal with the following question:
   under which conditions the chosen formal normal $\mathcal N$ holds in analytic category, i.e.
   any analytic object $f\in \{F_0 + h.o.t.\}$ can be brought to $\widetilde f\in \mathcal N$
   by an {\it analytic} transformation of the given pseudo-group, and consequently $\widetilde f$ is also
   analytic.   This question
   is precisely the problem we want to address in this article.

\medskip

In this paper we define the big denominators property in general case (for an arbitrary local classification problem),
we show that this property holds in a number of problems, and we prove that if we have big denominators
and a formal normal form is well-chosen then this formal normal form holds in analytic category.

\medskip

The classical obstacles for transition from formal to analytic category are {\it small divisors} such as those encountered in celestial mechanics or local dynamical systems. There are other obstacles. For example,
in the problem of local classification of vector fields of the form $\dot x = Ax + h.o.t. $ with a fixed matrix $A$
there are the following cases: (a) small divisors, (b) no small divisors,
but there are infinitely many resonant relations and (c) no small divisors, the tuple of the eigenvalues of $A$ belongs to the Poincar\'e domain (and consequently there are not more than a finite number of resonant relations).
The resonant, or Poincar\'e-Dulac formal normal form does not hold in analytic category not only in case (a), but also, as it was proved by A. Brjuno, in  case (b),
 see \cite{Encyclo1,Bruno}. In case (c) it holds in analytic category. One of the explanation of this classical theorem,
 the explanation which is the {\it starting point for this paper}, is as follows:
  instead of having small divisors, in case (c) one has {\it big denominators}.

\medskip

We claim that there are many other significant local classification problems where Theorem \ref{main1}
works and allows obtain new results. The main idea of big denominators is that the {\it big denominators property will overcome the factorial divergence} provided by the derivatives  in the nonlinear equations expressing the analytic equivalence of an object to
an object having a formal normal form.

\medskip

In section \ref{sec-big-denom-classif-problems} we define a very wide class of local classification problems
we deal with, we present a simple way for constructing a formal normal form, and we define the big denominator
property. Our first main Theorem \ref{main1} (see below) states that in the case of big denominators and uniformly bounded
formal normal form, this normal form holds in the analytic category.

\medskip

In section \ref{sec-PDEs} we formulate our second main theorem, Theorem \ref{main-thm} on local solvability of non-linear systems of
PDEs. We define big denominators for such systems and prove the local analytic solvability in the case of big denominators. We explain that Theorem \ref{main1} is a simple corollary of Theorem \ref{main-thm}.
It means that a ``right place" of Theorem \ref{main1} is the local theory of analytic non-linear PDEs rather than
local classification problems. Nevertheless the applications of Theorem \ref{main-thm} that we know concern
namely local classification problems. For instance, the conjugacy of two germs of vectors fields $X,Z$ by the mean of a germ of a diffeomorphism $\Psi$ is given by $\Psi_*X=Z$ and can written as a non-linear PDE's satisfied by $\Psi$.

\medskip

Theorems \ref{main1} and \ref{main-thm} are proved in section \ref{sec-proofs}.

\medskip

What happens if there are the big denominators, but they are not big enough to overcome the growth of the derivatives? Divergence of the solution is to be expected as it is well known for germs of vector fields. Can this divergence be very wild ? This question is studied in section \ref{sec-Gevrey}. We prove that in this case a formal normal form holds in $\al$-Gevrey category for some $\al >0$ (the loss of growth).  It means that the norm of the homogeneous degree $i$ part of the normalizing transformation grows as $(i!)^{\al}$. This shows that even if the solution diverges, this divergence is not too wild. We recover recent results by Bonckaert-De Maesschalck and also by Iooss-Lombardi. This is the same phenomenon as in problem of nonlinear singular ordinary differential equation with irregular singularity. Our method of proof of this result is inspired by  Malgrange's version of Maillet theorem \cite{malgrange-maillet}. Existence of smooth Gevrey solution such as \cite{stolo-gevrey} or the existence of ``sectorial'' holomorphic solution such as in \cite{Ram-Mart1,Stolo-classif} are actually out of reach in that general context.

\medskip

Our claim that there are many significant local classification problems with big denominators
so that we can apply our main theorems to explain both some classical and some recently obtained results,
as well as to obtain new results on analytic normal forms, is confirmed in
section \ref{sec-appl} called ``Applications" and in the appendix section. In these sections we show what our main theorems
give for concrete local classification problems: of singular vector fields, of functions (including non-isolated singularities), of $n$-tuples of linearly independent vector fields on $\mathbb R^n$,
of Riemannian metrics, and of conformal structures.

\section{Big denominators in local classification problems}
\label{sec-big-denom-classif-problems}

\subsection{Classification problems with filtering action of a pseudo-group}

In order to describe a very wide class of local classification problems we will deal with, we need the following
notations:

\medskip

\noindent $\ank $ (resp. $\widehat{\ank}$) is the space of $k$-tuples of germs at $0\in \mathbb R^n$ (or $\Bbb C^n$) of analytic functions (resp. formal power series maps) of $n$ variables;

\medskip

\noindent $\left(\ank \right)^{(i)}$ is the homogeneous part of $\ank $ of degree $i\geq 0$;

\medskip

\noindent  $\left(\ank \right)_{>d}$ is the subspace of $\ank $ consisting of germs with zero $d$-jet at $0\in \mathbb R^n$;

We recall that 
$$
\widehat{\ank}=\bigoplus_{i\geq 0}\left(\ank \right)^{(i)}.
$$
\begin{definition}
The order at the origin of the formal power series $F=\sum_{i\geq 0}F^{(i)}\in \widehat{\ank}$ is the largest integer $k$, such that $F^{(i)}=0$, for all $i<k$ and $F^{(k)}\neq 0$. It will be denoted by $\text{ord}_0F$.
\end{definition}
\begin{definition}
\label{def-norms}
Let $i\ge 0$ and let $F = (F_1,..., F_k)\in \left(\mathbb A_n^k\right)^{(i)}$. Let
$F_j = \sum F_{j, \alpha }
x^\alpha $ where the sum is taken over all $j=1,...,k$ and all
multiindexes $\alpha = (\alpha _1,..., \alpha _n)$ such that $\vert \alpha \vert = \alpha _1+\cdots + \alpha _n = i$. Then
$$\vert \vert F_j \vert \vert = \sum _{\vert \alpha \vert = i}\vert F_{j,\alpha }\vert, \ \ \vert \vert F \vert \vert = max \left(\vert \vert F_1\vert \vert , \cdots ,  \vert \vert F_k\vert \vert \right).$$
\end{definition}
We recall that $F=\sum_{i\geq 0}F^{(i)}\in \widehat{\ank}$ defines a germ of analytic map if and  only if there exist, $M>0$ and $r>0$ such that $\|F^{(i)}\|\leq M (1/r)^i$. It is well known (see \cite{Grauert-L1} for instance) that the space
\begin{equation}\label{topo}
\cH_r:=\left\{F\in \widehat{\ank},\; \|F\|_r:=\sum_{i\geq 0}\|F^{(i)}\|r^i<+\infty\right\}
\end{equation}
is a Banach space. All these spaces define a basis of neighborhoods for $\Bbb A_n^k$.\\

Let us fix
\begin{equation}
\label{space}
P^{(q)} \in \left(\ans \right)^{(q)}, \ \ q\ge 0 \ \
\text{and the affine space} \ \
\mathcal A_{P^{(q)}} = P^{(q)} + \left(\ans \right)_{> q}.
\end{equation}
Let $r\in\Bbb N^*$ and let ${\bf m} = (m_1,\ldots,m_r)\in \Bbb N^r$ be a multiindex.
\medskip

\ni Let $\cG $ be a pseudo-group acting on $\mathcal A_{P^{(q)}}$.
We assume that $\cG$ has the form

\begin{equation}
\label{group}
\cG = id + {\mathcal F}_{r,\bf m}^{>0}, \ \ {\mathcal F}_{r,\bf m}^{>0} = \left(\mathbb A_n\right)_{> m_{1}} \times \left(\mathbb A_n\right)_{> m_{2}}
\times \cdots \times \left(\mathbb A_n\right)_{> m_{r}}
\end{equation}
The space ${\mathcal F}_{r,\bf m}^{>0}$ is filtered by the subspaces 
$$
{\mathcal F}_{r,\bf m}^{>i}:=\left(\mathbb A_n\right)_{> i+m_{1}} \times \left(\mathbb A_n\right)_{> i+m_{2}}
\times \cdots \times \left(\mathbb A_n\right)_{> i+m_{r}},\quad i\geq 0
$$
\begin{remark}
The infinite-dimensional vector space ${\mathcal F}$ parameterizes the {\it Lie algebra} of the group $\cG$.\
\end{remark}
\begin{definition}
We shall say that $F\in {\mathcal F}_{r,\bf m}^{>0}$ has order $>i$ at the origin if $F\in {\mathcal F}_{r,\bf m}^{>i}$ but $F\not\in {\mathcal F}_{r,\bf m}^{>i+1}$.
\end{definition}


\begin{problem}
\label{problem-construct-analytic}
To find as simple as possible  normal form $\mathcal N\subset \mathcal A_{P^{(q)}}$ serving for the whole $\mathcal A_{P^{(q)}}$, i.e. any $f\in \mathcal A_{P^{(q)}}$ is equivalent to some $\widetilde f\in \mathcal N$
with respect to the action of $\cG $.
\end{problem}

Certainly we need some properties of the action of $\cG $ . At first we assume that the action is
filtering which means the following. Denote
$${\mathcal F}^{(i)}_{r,\bf m} =  \left(\mathbb A_n\right)^{(m_{1}+i)} \times \left(\mathbb A_n\right)^{(m_{2}+i)}
\times \cdots \times \left(\mathbb A_n\right)^{(m_{r}+i)}, \ \ i\ge 1.
$$

\begin{definition}
\label{def-filtering}
The action of $\cG $ on $\mathcal A_{P^{(q)}}$ is filtering
 if for any $P^{(q)}+R\in \mathcal A_{P^{(q)}}$ and any $F\in {\mathcal F}_{r,\bf m}^{>0}$ one has
\begin{equation}
\label{eq-conjugacy}
(id + F)_*(P^{(q)}+R) = P^{(q)}+R + {\mathcal S}_{P^{(q)}}(F) + \mathcal T(R; F)
\end{equation}
where ${\mathcal S}_{P^{(q)}}$ is a {\it linear operator defined by $P^{(q)}$ only}, \ $\mathcal T(R,0) = 0$ and
\begin{equation}\label{eq-filtering1}
{\mathcal S}_{P^{(q)}}\left({\mathcal F}^{(i)}_{r,\bf m}\right)
\subseteq \left(\mathbb A_{n}^{s}\right)^{(q+i)},
\end{equation}
\begin{equation}\label{eq-filtering2}
\text{ord}_0\left(\mathcal T\left(R; F\right)- \mathcal T\left(R; G\right)\right))> \text{ord}_0\left(F- G\right)+q.
\end{equation}
\end{definition}

\begin{remark} 
The linear operator ${\mathcal S}_{P^{(q)}}$ is the linearization
of the action of $\cG$ at identity evaluated at $P^{(q)}$.
In the case where $\cG$ is the group of germs of diffeomorphisms at $0$, then ${\mathcal S}_{P^{(q)}}$ maps a germ of vector field $F$ to the Lie derivative of $F$ along $P^{(q)}$: ${\mathcal S}_{P^{(q)}}(F) =[\cS_{P^{(q)}}, F]= \frac{d}{dt} (\exp(tF)_*P^{(q)})_{\vert _{t=0}}$ where $\exp(tF)$ denotes the flow at time $t$ of the vector field $F$.
\end{remark}

\medskip

\noindent{\sc Notation}. By $\widehat{\mathcal G}=Id+\widehat{{\mathcal F}}$ where $\widehat{{\mathcal F}}=\left(\widehat{\mathbb A}_n\right)_{> d_{1}} \times \cdots \times \left(\widehat{\mathbb A}_n\right)_{> d_{r}}$ we will denote the group of formal transformations
corresponding to the group $\cG$ .

\subsection{Formal normal form}

The solution of Problem \ref{problem-construct-analytic} in the formal category is given by the following simple
statement.

\begin{prop}[Formal normal form]
\label{prop-formal-nf}
Assume that the action of $\cG$ on $\mathcal A_P$ is filtering.
Let $${\mathcal S}_{P^{(q)}}^{(i)}: \ {\mathcal F}^{(i)}_{r,\bf m} \ \to \left(\ans \right)^{(q+i)}$$ be the restriction of ${\mathcal S}_{P^{(q)}}$ to ${\mathcal F}^{(i)}$.
Fix any complementary subspaces $\mathcal N^{q+i}\subset \left(\ans \right)^{(q+i)}$
to the image of the operators ${\mathcal S}_{P}^{(i)}$:
\begin{equation}
\label{direct-sum}
\left(\ans \right)^{(q+i)} = Image \ {\mathcal S}_{P}^{(i)} \oplus \mathcal N^{q+i}, \ \ i\ge 1.
\end{equation}
Let $$\widehat{\mathcal N}= \mathcal N^{(q+1)} \oplus \mathcal N^{(q+2)}\oplus \cdots .$$
Then the affine space $P^{(q)} + \widehat{\mathcal N}$ is a formal normal form with respect to the action of $\cG$
serving for the whole affine space $\mathcal A_{P^{(q)}}$.
That is, for each $\widehat R\in \left(\widehat{\ans }\right)_{> q}$, there exists a formal transformation $\hat\Phi\in \hat {\mathcal G}$ such that
$$
{\hat\Phi}_*(P^{(q)}+R) -P^{(q)}\in \widehat{\mathcal N}.
$$
\end{prop}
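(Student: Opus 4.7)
The plan is to build $\hat\Phi$ as a formal infinite composition $\hat\Phi = \lim_{k\to\infty}(id + F_k)\circ\cdots\circ(id + F_1)$, where each $F_k \in {\mathcal F}^{(k)}_{r,\bf m}$ is chosen to kill, modulo $\mathcal N^{q+k}$, the degree-$(q+k)$ homogeneous component of the current remainder. The filtering hypothesis guarantees that each newly applied transformation $(id+F_k)$ disturbs only homogeneous parts of degree $\geq q+k$, so the normalization performed at earlier steps is preserved; this is the mechanism that makes the induction close.

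First, I set up the induction on $k\ge 1$. For $k=0$, take $\hat\Phi_0 = id$, so that $(\hat\Phi_0)_*(P^{(q)}+R) = P^{(q)} + R$ with $\text{ord}_0 R > q$. Assume that after $k-1$ steps a formal transformation $\hat\Phi_{k-1}\in \hat{\mathcal G}$ has been built with
$$
(\hat\Phi_{k-1})_*(P^{(q)}+R) = P^{(q)} + \sum_{j=1}^{k-1} N^{(q+j)} + R_k,
$$
where $N^{(q+j)}\in \mathcal N^{q+j}$ and $\text{ord}_0 R_k \ge q+k$.

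Second, I apply the filtering identity (\ref{eq-conjugacy}) to $P^{(q)} + \sum_{j=1}^{k-1}N^{(q+j)} + R_k$ with a yet-to-be-chosen $F_k \in {\mathcal F}^{(k)}_{r,\bf m}$. By (\ref{eq-filtering1}), ${\mathcal S}_{P^{(q)}}(F_k)\in (\ans)^{(q+k)}$, and by (\ref{eq-filtering2}) applied with $G=0$ (using $\mathcal T(\cdot;0)=0$), the term $\mathcal T(\sum N^{(q+j)}+R_k;F_k)$ has order strictly greater than $q+k$. Consequently the degree-$(q+k)$ homogeneous part of $(id+F_k)_*\circ(\hat\Phi_{k-1})_*(P^{(q)}+R)$ equals exactly
$$
R_k^{(q+k)} + \mathcal S_{P^{(q)}}^{(k)}(F_k),
$$
while the degrees $q+1,\dots,q+k-1$ are untouched. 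The splitting (\ref{direct-sum}) lets me choose $F_k\in{\mathcal F}^{(k)}_{r,\bf m}$ so that this sum lies in $\mathcal N^{q+k}$; set $N^{(q+k)}$ to be its value, define $\hat\Phi_k := (id+F_k)\circ\hat\Phi_{k-1}$, and collect the surviving contributions of order $>q+k$ into the new remainder $R_{k+1}$.

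Finally, I pass to the formal limit. Since every component of $F_k$ has order at least $m_j+k$, the formal transformation $\hat\Phi_k$ agrees with $\hat\Phi_{k-1}$ on all jets of degree $\le k-1 + \min_j m_j$. Hence the sequence $(\hat\Phi_k)$ stabilizes degree by degree in $\widehat{\mathcal F}$ and defines a formal element $\hat\Phi\in \hat{\mathcal G}$. By construction $\hat\Phi_*(P^{(q)}+R) - P^{(q)} = \sum_{k\ge 1} N^{(q+k)} \in \widehat{\mathcal N}$, as desired. I do not expect any genuine obstacle here: this is the standard ``solve a homological equation at each degree'' induction, and the filtering hypothesis (\ref{eq-filtering1})--(\ref{eq-filtering2}) has been crafted precisely so that the nonlinear contribution $\mathcal T$ pushes to strictly higher degrees and the step at each level reduces to the purely algebraic splitting (\ref{direct-sum}).
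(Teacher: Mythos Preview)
Your proof is correct and follows exactly the same approach as the paper's, which is a two-sentence sketch of the same degree-by-degree induction using the filtering property. You have simply supplied the details the paper omits: the explicit use of (\ref{eq-filtering2}) with $G=0$ to see that $\mathcal T$ contributes only in degree $>q+k$, and the verification that the sequence $(\hat\Phi_k)$ stabilizes formally.
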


\begin{proof}The filtering property allows to normalize the terms of order $q+1$, i.e. to bring them to $\mathcal N^{(q+1)}$,  by a transformation of form $id + \lambda $, $\lambda \in \mathcal F^{(1)}_{r,\bf m}$. Assume now that terms of order $\le q+p$ are normalized. The filtering property allows to normalize the terms of order $(q+p+1)$ by a
transformation of form $id + \lambda $, $\lambda \in \mathcal F^{(p+1)}_{r,\bf m}$ without changing the terms of order $\le q+p$. \end{proof}

\subsection{Theorem on analytic normal forms}

\begin{definition}
\label{def-formal-to-analytic}
 A formal normal form $P^{(q)} + \widehat{\mathcal N}$ holds in analytic category if for any $R\in \left(\ans \right)_{> q}$ there exists
$\Phi\in  {\mathcal G}$ such that the formal series of the analytic germ ${\hat\Phi}_*(P^{(q)}+R) -P^{(q)}$
belongs to $\widehat{\mathcal N}\cap \left(\ans \right)_{> q}$.
\end{definition}

Now Problem  \ref{problem-construct-analytic} can be specified as follows.

\begin{problem}
\label{problem-when-formal-is-analytic}
What has to be assumed in order to state that the normal form $P^{(q)} + \widehat{\mathcal N}$ holds in analytic category?
\end{problem}

\medskip

\ni We will give an answer involving the following definition.

\medskip

\ni {\bf Notation}. 
Given $F=(F_1,...,F_r)\in \mathcal F_{r,\bf m} $
and $x\in \mathbb R^n$ denote
$$j_x^{{\bf m}}F  =
\left(j^{m_{{1}}}_x F_1, \ \cdots , \ j^{m_{{r}}}_x F_r\right), \ \
J^{{\bf m}}\mathcal F_{r,\bf m} =
\left\{\left(x, \ j^{{\bf m}}_x F\right), \ \ x\in \mathbb R^n, \ F\in  \mathcal F_{r,\bf m}\right\}.$$

\begin{definition}
\label{def-diff-action}
The action of $\cG$ on $\mathcal A_{P^{(q)}}$ is an {\bf analytic differential action} of
order ${\bf m} = (m_1,...,m_r)$  if there exists a linear map $\cS$ depending only on $P^{(q)}$ and, for any $X\in \mathcal A_{P^{(q)}}$, there exists an analytic map germ $$W: \left(J^{{\bf m}}\mathcal F_{r,\bf m} , 0\right) \to \left(\mathbb R^s, 0\right)$$
such that, $W(x,0)=0$ and for any $x$ close to $0$ and any $F\in {\mathcal F}_{r,\bf m} $ with $j_x^{{\bf m}}F$ close to $0$,
$$
(id+F)_*X= X +\cS(F)+ W(x, j^{{\bf m}}_xF).
$$
\end{definition}
\begin{definition}\label{def-regular}
An analytic differential action of order ${\bf m} = (m_1,...,m_r)$ is said to be {\bf regular} if, for any formal map $F=(F_1,\ldots, F_r)$ with $\text{ord}_0F_i\geq m_i+1$, then
$$
\text{ord}_0\left(\frac{\partial W_i}{\partial u_{j,\alpha}}(x,\partial F)\right)\geq p_{j,|\alpha|},
$$
where
\begin{equation}\label{def-p}
p_{j,|\alpha|}=\max(0, |\alpha|+q+1-m_j).
\end{equation}
Here, we have set $\partial F:=\left(\frac{\partial^{|\alpha|} F_i}{\partial x^{\alpha}},\;1\leq i\leq r,\,0\leq |\alpha|\leq m_i  \right)$.
\end{definition}

\ni Given a formal normal form $P^{(q)} + \widehat{\mathcal N}$ denote by
$$\pi _N^{(q+i)}: \ \left(\ans \right)^{(q+i)} \ \to \text{Image } {\mathcal S}_{P^{(q)}}^{(i)}$$
the projection corresponding to the direct sum (\ref{direct-sum}).
Then the equation
\begin{equation}
\label{eq-with-pi}
{\mathcal S}_{P^{(q)}}^{(i)}(F^{(i)}) =
\pi _{N}^{(q+i)}(A^{(q+i)}), \ \ A^{(q+i)}\in \left(\ans \right)^{(q+i)}, \ \ i\ge 1
\end{equation}
has a solution
\begin{equation}
\label{solution-of-eq-with-pi}
F^{(i)} = \left( F^{(m_1+i)}_1,..., F^{(m_r+i)}_r \right)
\in  {\mathcal F}^{(i)}_{r,\bf m}
\end{equation}
for any $A^{(q+i)}\in \left(\ans \right)^{(q+i)}$.

\medskip

To formulate our main theorem on analytic normal forms we need to fix norms in the
spaces of homogeneous vector functions. 

\begin{thm}[Main theorem on analytic normal form]\label{main1}
Assume that the action of a pseudo-group $\cG $ of form
(\ref{group}) on an affine space of form (\ref{space}) is {\bf analytic differential} of order ${\bf m} = (m_1,\ldots, m_r)$, {\bf filtering} and {\bf regular}. Let $P^{(q)}+\widehat{\mathcal N}$ be a formal normal
form as constructed in Proposition \ref{prop-formal-nf}.
Assume there exists $C>0$ which depends neither on $i$ nor on
$A^{(q+i)}\in \left(\ans \right)^{(q+i)}$
such that equation (\ref{eq-with-pi}) has a solution (\ref{solution-of-eq-with-pi})
satisfying the estimates
\begin{equation}
\label{estimates-big-denom}
\vert \vert F_{1}^{(m_1+i)}\vert \vert < C\frac{\vert \vert A^{(q+i)}\vert \vert }{i^{m_{1}}}, \ \
\cdots , \ \ \vert \vert F_{r}^{(m_r+i)}\vert \vert < C\frac{\vert \vert A^{(q+i)}\vert \vert }{i^{m_{r}}}.
\end{equation}
The formal normal form $P^{(q)}+\widehat{\mathcal N}$ holds in analytic category.
\end{thm}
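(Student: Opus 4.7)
The plan is to recast Proposition \ref{prop-formal-nf} as a single analytic fixed-point problem and to exploit the precise balance between the gain $1/i^{m_j}$ supplied by the big denominators bound (\ref{estimates-big-denom}) and the at most polynomial loss $\lesssim i^{m_j}$ coming from the derivatives that enter the analytic differential term $W(x,\partial F)$ in Definition \ref{def-diff-action}.

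First, I would combine Definition \ref{def-diff-action} with the filtering property to rewrite the requirement $(id+F)_*(P^{(q)}+R)\in P^{(q)}+\widehat{\mathcal N}$ as
\begin{equation*}
\mathcal{S}_{P^{(q)}}(F)+W(x,\partial F)+R \in \widehat{\mathcal N}.
\end{equation*}
Using the decomposition (\ref{direct-sum}) and denoting by $\mathcal L$ the right-inverse of $\mathcal{S}_{P^{(q)}}$ furnished by the bound (\ref{estimates-big-denom}), this splits into the pair
\begin{equation*}
F \;=\; -\mathcal{L}\bigl(R+W(x,\partial F)\bigr), \qquad N \;=\; \pi_{\mathcal N}\bigl(R+W(x,\partial F)\bigr),
\end{equation*}
where $\|\mathcal L(A)_j^{(m_j+i)}\|\le C\|A^{(q+i)}\|/i^{m_j}$ on each homogeneous component, and $\pi_{\mathcal N}$ denotes the projection onto $\widehat{\mathcal N}$ dual to (\ref{direct-sum}).

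Second, I would introduce on $\widehat{\mathcal F}_{r,\mathbf m}^{>0}$ the family of weighted norms
\begin{equation*}
\|F\|_\rho \;:=\; \max_{1\le j\le r}\sum_{i\ge 1}\|F_j^{(m_j+i)}\|\,\rho^{m_j+i},
\end{equation*}
which by (\ref{topo}) are compatible with the Banach structure on ${\cH}_\rho$, and show that the nonlinear operator $\Phi(F):=-\mathcal L(R+W(x,\partial F))$ is a strict contraction on a small ball of the associated Banach space $\mathcal B_\rho$ for all sufficiently small $\rho$. The crucial pointwise estimate is that, for $|\alpha|\le m_j$, the derivative $\partial^\alpha F_j^{(m_j+i)}$ is a homogeneous polynomial of degree $m_j+i-|\alpha|$ with norm bounded by $(m_j+i)^{|\alpha|}\|F_j^{(m_j+i)}\|\lesssim i^{m_j}\|F_j^{(m_j+i)}\|$, and this $i^{m_j}$ growth is exactly killed by the $1/i^{m_j}$ factor coming from $\mathcal L$. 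When $|\alpha|>m_j$, the vanishing order $p_{j,|\alpha|}=|\alpha|+q+1-m_j$ of $\partial W/\partial u_{j,\alpha}$ at $x=0$, guaranteed by the regularity assumption (Definition \ref{def-regular}), provides the extra powers of $x$ that absorb the surplus $i^{|\alpha|-m_j}$.

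Third, combining these two ingredients shows that $\Phi$ maps a small ball of $\mathcal B_\rho$ into itself and is a strict contraction there, provided $\rho$ is small enough. Since $R\in\left(\ans\right)_{>q}$ is analytic of order strictly greater than $q$, its norm $\|R\|_\rho$ is finite on some $\mathcal B_{\rho_0}$ and tends to $0$ as $\rho\to 0$, so this small-data regime is always reached by taking $\rho$ sufficiently small. The Banach fixed-point theorem then produces an analytic $F\in\mathcal F_{r,\mathbf m}^{>0}$; by the formal uniqueness inherent in the degree-by-degree procedure of Proposition \ref{prop-formal-nf}, it agrees with the formal normalizing transformation, and the associated $P^{(q)}+N$ is therefore an analytic germ lying in $P^{(q)}+\widehat{\mathcal N}$, proving the theorem. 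The main obstacle is step two: the verification that $F\mapsto W(x,\partial F)$ is an analytic map from $\mathcal B_\rho$ to an appropriate target space with constants uniform in the order $|\alpha|$ of the derivatives appearing in the Taylor expansion of $W$. The tight matching between the $1/i^{m_j}$ gain from $\mathcal L$, the $i^{m_j}$ loss from $\partial^\alpha$, and the $x$-vanishing supplied by regularity is precisely the place where the three hypotheses---analytic differential, filtering, regular---must work in concert, and where the big denominators property does its essential job.
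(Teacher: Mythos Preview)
Your route is genuinely different from the paper's: you attempt a direct Banach contraction in the weighted spaces $\mathcal H_\rho$, whereas the paper first reduces to a one-variable majorant ODE (Proposition~\ref{prop-to-construct-G}) and then applies the implicit function theorem in the anisotropic spaces $H^{m_j}=\{f=\sum f_it^i:\sum_i|f_i|\,i^{m_j}<\infty\}$ after a Malgrange-type rescaling $t\mapsto\lambda t$.

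There is a real gap in your contraction step when the $m_j$ are not all equal. You claim the derivative loss $i^{m_j}$ is ``exactly killed'' by the $1/i^{m_j}$ gain from $\mathcal L$, but these two indices need not match: the gain $1/i^{m_j}$ in (\ref{estimates-big-denom}) is attached to the $j$-th \emph{output} component of $\mathcal L$, while the loss $i^{|\alpha|}\le i^{m_{j'}}$ comes from $|\alpha|\le m_{j'}$ derivatives of the $j'$-th \emph{input} component $F_{j'}$, and $W$ mixes all input components into every output component. When $m_{j'}>m_j$ the surplus factor $i^{m_{j'}-m_j}$ is not absorbed by your norm $\|F\|_\rho=\max_j\sum_i\|F_j^{(m_j+i)}\|\rho^{m_j+i}$, and regularity (Definition~\ref{def-regular}) does not rescue you, since $p_{j',|\alpha|}$ depends only on the input index $j'$ and not on the output index $j$. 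This already occurs in the Riemannian-metric application of the appendix, where $\mathbf m=(1,\dots,1,0,\dots,0)$ and the $Q$-output (gain $1/i^{0}=1$) depends on $D\phi$ (loss $i^{1}$). The paper's anisotropic spaces are precisely what repair this: placing $f_j$ in $H^{m_j}$ and landing all equations in the Banach algebra $H^0$ makes the linearisation diagonal and invertible, while every $\frac{d^l z^{m_{j'}+k}f_{j'}}{dz^l}$ for $l\le m_{j'}$ lies in $H^0$. Your scheme would go through with this modification, and would then amount to applying the implicit function theorem directly rather than via the majorant reduction. A separate minor slip: the case ``$|\alpha|>m_j$'' you discuss never arises, since an action of order $\mathbf m$ involves only $j^{m_j}_xF_j$; the role of regularity is that $p_{j,|\alpha|}>0$ already for $m_j-q\le|\alpha|\le m_j$.
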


\begin{remark}
In section \ref{sec-appl} and in the Appendix, we present several natural local classification problems for which the assumptions of Theorem 2.13 hold 
true.
\end{remark}

The main assumption $(\ref{estimates-big-denom})$ of the previous Theorem means that we have the following two properties:

\medskip


\noindent $\bullet$ {\it The big denominators property.} It is a property of
the linear operators $\mathcal S_{P^{(q)}}^{(i)}$ and it has nothing to do with
the formal normal form. This property is as follows:

\medskip

\noindent there exists $C>0$ such that for any $i\ge 1$ and any
 $A^{(q+i)}\in \text{Image } \mathcal S_{P^{(q)}}^{(i)}$
 the equation ${\mathcal S}_{P^{(q)}}^{(i)}(F^{(i)}) =
A^{(q+i)}$ has a solution (\ref{solution-of-eq-with-pi})
 satisfying (\ref{estimates-big-denom}).

\medskip

\noindent $\bullet $ {\it Uniformly bounded formal normal form}.
This property concerns the choice of the complementary spaces in Proposition \ref{prop-formal-nf} which
defines the formal normal form $P^{(q)}+\widehat{\mathcal N}$. We will say that this formal normal form is uniformly bounded if for any $i\ge 1$ and any $A^{(q+i)}\in \left(\ans \right)^{(q+i)}$ one has
 $ \vert \vert \pi _{N}^{(q+i)}(A^{(q+i)}) \vert \vert < C\vert \vert A^{(q+i)}\vert \vert $ for some constant $C>0$ which depends neither on $i$ nor on
 $A^{(q+i)}\in \left(\ans \right)^{(q+i)}$.

\medskip

So, the assumption $(\ref{estimates-big-denom})$ of Theorem \ref{main1} is equivalent to the assumption that
the operator $\mathcal S_{P^{(q)}}$ has big denominator property along with the assumption that the formal normal form
$P^{(q)}+\widehat{\mathcal N}$ is uniformly bounded.

\begin{remark}
One may consider a subspace ${\cal B}_{P^{(q)}}$ of ${\cal A}_{P^{(q)}}$ of higher order perturbations of $P^{{(q)}}$ that preserve a property. It could be, for instance, commuting with an involution or leaving invariant a differential form. Then, we will use the subgroup $\tilde {\cal G}$ of ${\cal G}$ that leave the property invariant. Then, one has to consider a subspace $\tilde {\cal F}^{>0}_{r,m}$ of ${\cal F}^{>0}_{r,m}$, namely the ``Lie algebra'' of $\tilde{\cal G}$. The operator ${\cal S}$ has to be considered as a map from $\tilde {\cal F}^{>0}_{r,m}$ to ${\cal B}_{P^{(q)}}$ with the induced topology. We refer to \cite{stolo-lombardi}[section 4.3] for a similar discussion in the case of vector fields (the map ${\cal S}$ is denoted there $d_0$).
\end{remark}

\begin{remark}
The theorem is also true if instead using the norms of definition \ref{def-norms}, we use a norm that satisfies the following requirements~: 
\begin{enumerate}
\item $f\in \mathbb A_n$ if and only if $\widehat f (t):= \sum_{i\geq 0}\|f^{(i)}\|t^i$ is analytic at $0\in \Bbb C$, where $f^{(i)}$ denotes the homogeneous part of degree $i$ of the formal power series $f$.
\item $\|\bar f^{(i)}\|=\|f^{(i)}\|$ ($\bar f^{(i)}$ is the polynomial obtained by replacing the coefficients of $f^{(i)}$ by their absolute value)
\item 
$$
\widehat{\frac{\partial^Q f}{\partial x^{|Q|}}}\prec \frac{\partial^{|Q|}\widehat{f}}{\partial t^{|Q|}}.
$$
\item
$$
\widehat{fg} \prec \widehat{f}.\widehat{g}.
$$
\end{enumerate}
Besides, characterization of convergent power series, these requirements are those of lemma \ref{lem-deriv}.
%
For instance, the modified Belitskii scalar product defined as follow~:
$$
\left<f,g\right>_{MB} = \sum _{\vert \alpha \vert = i}\frac{\alpha !}{|\al|!}f_\alpha \bar g_\alpha
$$
satisfied these requirements (as shown in \cite{stolo-lombardi} and remark \ref{belit-lemma}).
\end{remark}

\section{Big denominators in non-linear systems of PDEs}
\label{sec-PDEs}

In this section we generalize Theorem \ref{main1} to a theorem on local analytic solvability of non-linear systems of
PDEs. We define big denominators for such systems and prove the local analytic solvability in the case of big denominators. We explain that Theorem \ref{main1} is a simple corollary of Theorem \ref{main-thm} in this section.
It means that a ``right place" of Theorem \ref{main1} is the theory of non-linear PDEs rather than
local classification problems. Nevertheless the applications of Theorem \ref{main-thm} that we know concern
namely local classification problems. We refer to \cite{treves-Ovcyannikov} for analytic theory of PDE's. For the study of some singularities analytic of PDE's, we refer to \cite{baouendi-goulaouic-fuchs,gerard-tahara-book}. Our references for singularity analytic theory of ODE's are \cite{wasow, sibuya-textbook}. 

\subsection{The problem}

Let ${\bf m} = (m_1,\ldots,m_r)\in \Bbb N^r$ be a fixed multiindex. Given $F=(F_1,...,F_r)\in \mathcal F_{r,\bf m}^{>0}$
and $x\in \mathbb R^n$ denote 
$$
j_x^{{\bf m}}F  =\left(j^{m_{{1}}}_x F_1, \ \cdots , \ j^{m_{{r}}}_x F_r\right), \ \
J^{{\bf m}}\mathcal F_{r,\bf m}^{>0} =\left\{\left(x, \ j^{{\bf m}}_x F\right), \ \ x\in (\mathbb R^n,0), \ F\in  \mathcal F_{r,\bf m}^{>0}\right\}.
$$
%
\begin{definition}
\label{def-diff-map}
A map $\mathcal T: \cF_{r,\bf m}^{>0}\to \mathbb A_n^s$ is a {\bf differential analytic map of order ${\bf m}$} at the point
$0\in \mathbb A_n^k$ if there exists an analytic map germ $$W: \left(J^{{\bf m}}\mathcal F_{r,\bf m}^{>0}, 0\right) \to \left(\mathbb R^s, 0\right)$$
such that $\mathcal T(F)(x) = W(x, j^m_xF)$ for any $x\in \mathbb R^n$ close to $0$ and any function germ $F\in \cF_{r,\bf m}^{>0}$ such that $j^m_0F$ is close to $0$.
\end{definition}

Denote by $$v = \left(x_1,...,x_n, u_{j, \alpha }\right), \ \
  1\leq j\leq r, \ \alpha = (\alpha _1,..., \alpha _n)\in\Bbb N^n, \ \vert \alpha \vert \le m_j$$
   the local coordinates in $J^m\mathbb A_n^r$, where $u_{j, \alpha }$ corresponds to the partial
   derivative $\partial^{|\alpha|} /\partial x_1^{\alpha _1}\cdots \partial x_n^{\alpha _n}$ of the $j$-th component
   of a vector function $F\in \mathbb A_n^r$. 
%

\begin{definition}
Let $\mathcal T: \cF_{r,\bf m}^{>0}\to \mathbb A_n^s$ be a map. 
\begin{itemize} 
\item We shall say that it {\bf increases the order at the origin} (resp. strictly) by $q$ if  
for all $(F,G)\in (\cF_{r,\bf m}^{>0})^2$ then 
$$
\text{ord}_0\left({\mathcal T}(F)-{\mathcal T}(G)\right)\geq \text{ord}_0(F-G)+q,
$$
(resp. $>$ instead of $\geq$).
\item Assume that ${\cT}$ is an analytic differential map of order $\bf m$
defined by a map germ $W:  \left(J^{{\bf m}}\mathcal F_{r,\bf m}^{>0}, 0\right) \to \left(\mathbb R^s, 0\right)$ as in Definition \ref{def-diff-map}. 
We shall say that it is {\bf regular} if, for any formal map $F=(F_1,\ldots, F_r)\in \widehat\cF_{r,\bf m}^{>0}$, then
$$
\text{ord}_0\left(\frac{\partial W_i}{\partial u_{j,\alpha}}(x,\partial F)\right)\geq p_{j,|\alpha|},
$$
where $p_{j,|\alpha|}=\max(0, |\alpha|+q+1-m_j)$. As above, we have set $\partial F:=\left(\frac{\partial^{|\alpha|} F_i}{\partial x^{\alpha}},\;1\leq i\leq r,\,0\leq |\alpha|\leq m_i  \right)$.
\end{itemize}
\end{definition}

Let us consider linear maps\footnote{see remark \ref{pi-formal}}~:
\begin{enumerate}
\label{triple}
\item 
$$
\mathcal S: \cF_{r,\bf m}^{>0}\ \to \ \mathbb A_n^s,
$$
that increases the order by $q$.
\item 
$$
\pi : \mathbb A_n^s \to Image \hskip .05cm (\mathcal S) \subset \mathbb A_n^s
$$
is projection onto $ Image \hskip .05cm (\mathcal S)$.
\end{enumerate}

Let us consider a differential analytic map of order $\bf m$, $\mathcal T:\cF_{r,\bf m}^{>0}\ \to \ \mathbb A_n^s$. 

We consider the equation
\begin{equation}
\label{eq-main}
\mathcal S(F) = \pi \left( \mathcal T(F)\right)
\end{equation}
The problem is to find a sufficient condition on the triple $\left(\mathcal S, \mathcal T, \pi \right)$ under which
equation (\ref{eq-main}) has a solution $F\in \cF_{r,\bf m}^{>0}$.
In what follows we will prove that one of sufficient conditions is the ``big denominators property"
of the triple $(\mathcal S,\mathcal T,\pi )$ defined in this section below.

\subsection{Big denominators. Main theorem}

\begin{definition}
Let $k\in\Bbb N^*$. Given $F\in \mathbb A_n^k$ we define an analytic function germ of one complex variable $z\in \mathbb C$ by 
$$\widehat F(z) := \sum _{i\ge 0} \vert \vert F^{(i)}\vert \vert z^i, \ \ \ \ z\in \mathbb C,$$
where $F^{(i)}$ denotes the homogeneous degree $i$ part of $F$ of the Taylor expansion at the origin. The norms in the spaces of homogeneous vector functions are those of Definition \ref{def-norms}.
\end{definition}

\begin{definition}\label{domination}
Given two formal power series $F=\sum F_\alpha z^\alpha $ and $G=\sum G_\alpha z^\alpha $ of $n$ complex variables
 $z = (z_1,...,z_n)$ we will say that $G$ dominates $F$ ($F\prec G)$ if
 $$  G_\alpha \geq 0 \ \text{and}\  |F_\alpha |\leq G_\alpha  \ \
\text{ for all multi-indexes}\  \alpha = (\alpha _1,...,\alpha _n).$$
We also denote
$$\overline F = \sum \vert F_\alpha \vert x^\alpha .$$
\end{definition}

Now we can define the big denominators property of the triple $(\mathcal S, \mathcal T, \pi )$ in equation (\ref{eq-main}).

\begin{definition}
\label{def-big-denominators}
The triple of maps $(\mathcal S, \mathcal T, \pi )$ of form (\ref{triple})
has {\bf big denominators property of order $\bf m$}
if there exists an nonnegative integer $q$ such that the following holds:

\begin{enumerate}

\item \ $\mathcal T$ is an regular analytic differential map of order $\bf m$ that strictly increases the order by $q$ and $j^{q}_0\mathcal T(0)=0$.

\item $\cS: \cF_{r,\bf m}^{>0}\ \to \ \mathbb A_n^s$ is linear and increases the order by $q$.

\item the linear map $\pi : \mathbb A_n^s \to Image \hskip .05cm (\mathcal S) \subset \mathbb A_n^s$ is a projection.

\item \ the map $\mathcal S$ admits right-inverse $\mathcal S^{-1}: Image (S)\to \mathbb A_n^r$ such that
the composition $\mathcal S^{-1}\circ \pi $ satisfies:

there exists $C>0$ such that for any $G\in \mathbb A_n^s$ of order $\geq q+1$, one has for all $1\leq i\leq r$
\begin{equation}
\label{BD}
\frac{d^{m_i} z^{q}\widehat{\cS_i^{-1}\circ \pi (G)}}{dz^{m_i}} \prec \ C\widehat{G}.
\end{equation}
where ${\cal S}_i^{-1}$ denotes the $i$th component of ${\cal S}^{-1}$, $1\leq i\leq r$.
\end{enumerate}
\end{definition}

Our main theorem is as follows.

\begin{thm}[Main theorem on non-linear systems of PDEs]
\label{main-thm}
Let us consider a system of analytic non-linear pde's such as equation (\ref{eq-main})~:
$$
\mathcal S(F) = \pi \left( \mathcal T(F)\right).
$$
If the triple $(\mathcal S, \mathcal T,\pi )$  has big denominators property of order $\bf m$,
according to definition \ref{def-big-denominators}, then the equation has an analytic solution $F\in \cF_{r,\bf m}^{>0}$.
\end{thm}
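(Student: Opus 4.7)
My plan is to proceed in three stages: build a formal solution $\hat F$ by Picard iteration, dominate it term-by-term by a scalar majorant series $\Psi$, and verify that $\Psi$ is in fact analytic by majorant estimates in the Banach space $\cH_r$ of (\ref{topo}).

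First I would set $F_0 = 0$ and $F_{k+1}=\mathcal{S}^{-1}\pi\,\mathcal{T}(F_k)$. Since $\mathcal{T}$ strictly increases the order by $q$ and $j_{0}^{q}\mathcal{T}(0)=0$, each $\mathcal{T}(F_k)$ has order $\ge q+1$, and the big-denominators inequality (\ref{BD}) then produces an output whose $i$-th component lies in $(\mathbb{A}_n)_{>m_i}$; the iteration therefore stays in $\mathcal{F}_{r,\mathbf{m}}^{>0}$. If $\text{ord}_0(F_k-F_{k-1})\ge d$, strict increase upgrades this to $\text{ord}_0(\mathcal{T}(F_k)-\mathcal{T}(F_{k-1}))\ge d+q+1$, and a second use of (\ref{BD}) yields $\text{ord}_0(F_{k+1,i}-F_{k,i})\ge d+m_i+1$. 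Hence $(F_k)$ is Cauchy in the Krull topology and converges to a formal solution $\hat F$ of (\ref{eq-main}).

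Next I would put $\Phi_j(z):=\widehat{\hat F_j}(z)$ and, integrating the big-denominators estimate applied to $G=\mathcal{T}(\hat F)$ term-by-term, derive
$$
\Phi_i(z)\;\prec\;C\,z^{-q}\!\int_0^z\!\cdots\!\int_0^z\widehat{\mathcal{T}(\hat F)}(z)\,dz^{m_i}.
$$
Writing $\mathcal{T}(F)(x)=W(x,j_x^{\mathbf{m}}F)$ and combining the norm properties (majorants of products and of derivatives, coefficient-wise positivity) with $\widehat{\partial^\alpha F_j}\prec d^{|\alpha|}\Phi_j/dz^{|\alpha|}$, I would then bound
$$
\widehat{\mathcal{T}(\hat F)}(z)\;\prec\;\overline{W}\bigl(z,\ldots,z,\Phi_1,\Phi_1',\ldots,\Phi_r^{(m_r)}\bigr),
$$
where $\overline{W}$ is $W$ with every coefficient replaced by its absolute value. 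The regularity of $\mathcal{T}$ forces every monomial of $\overline W$ containing $\Phi_j^{(|\alpha|)}$ with $|\alpha|>m_j-q-1$ to carry an extra factor $z^{|\alpha|+q+1-m_j}$, exactly the power needed to compensate the operator $z^{-q}\!\int_0^z\!\cdots\!\int_0^z$ above.

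Finally I would define inductively $\Psi_0=0$ and
$$
\Psi_{k+1,i}(z)\;=\;C\,z^{-q}\!\int_0^z\!\cdots\!\int_0^z\overline{W}_i\bigl(z,\ldots,z,\Psi_{k,1},\ldots,\Psi_{k,r}^{(m_r)}\bigr)\,dz^{m_i}.
$$
Monotonicity of $\overline W$ in its non-negative arguments, together with the previous majorant inequality, would give inductively the term-by-term domination $\widehat{F_k}\prec \Psi_k$, hence $\Phi\prec\lim_k\Psi_k$. A standard Cauchy majorant estimate in $\cH_r$ shows that, for $r$ small enough, the map on the right-hand side is a contraction on a small ball; therefore $\Psi_k$ converges in $\cH_r$ to an analytic limit $\Psi$, and the analyticity of $\hat F$ follows. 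The hard part will be Stage 3: setting up the right Banach space and verifying the contraction estimate rest crucially on the precise matching between the integration order $m_i$, the factor $z^{-q}$, and the vanishing order $\max(0,|\alpha|+q+1-m_j)$ of $\partial W_i/\partial u_{j,\alpha}$ enforced by the regularity hypothesis; any slack in this balance would break the majorant argument.
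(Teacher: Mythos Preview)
Your three-stage outline matches the paper's architecture, and Stages~1--2 are essentially what the paper does. Stage~3, however, has a genuine gap: the claim that the iteration is a contraction on a ball of $\cH_r$ is false. The space $\cH_r$ is not stable under differentiation, and the map $\Psi\mapsto C\,z^{-q}\!\int^{m_i}\overline W(z,\ldots,\Psi_r^{(m_r)})$ is not even locally Lipschitz there. Take $n=r=1$, $m_1=1$, $q=0$, $W=az^2+z\,u_{1}^{2}$ (regular, strictly order-increasing); on $\Psi=\varepsilon z^{\ell}$ the quadratic part of one iterate has $\cH_r$-norm $\frac{\varepsilon^2\ell}{2}r^{2\ell}$, so the Lipschitz ratio on any ball of radius $\rho$ is at least $\rho\ell/2$, unbounded in $\ell$. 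Your reading of regularity is also too strong: the condition bounds the order of $\partial W_i/\partial u_{j,\alpha}(x,\partial F)$ \emph{after substitution}, and in a product $u_{j,\alpha}u_{j',\alpha'}$ the required vanishing may be supplied by the other factor, not by a power of $x$; so it is not true that every monomial of $\overline W$ containing $\Phi_j^{(|\alpha|)}$ carries an explicit factor $z^{|\alpha|+q+1-m_j}$.

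The paper repairs exactly this point with two devices you omit. First, it fixes $k\ge q+1$, writes $F=F^{\le k}+F^{>k}$, Taylor-expands $W$ about $\partial F^{\le k}$, and solves only for $F^{>k}$; the extra $k$ units of vanishing are what force the quadratic remainder to carry enough powers of $z$ (this is condition~(4.10)). Second, it does \emph{not} use a contraction: it reduces to a scalar ODE system $\frac{d^{m_i}}{dz^{m_i}}(z^{m_i+q+k}f_i)=CG(z,\ldots)$ with a regular singularity, rescales $z\mapsto\lambda z$, and applies the analytic implicit function theorem to a map $(\lambda,f)\mapsto L(\lambda,f)$ from $(\mathbb R,0)\times H^{m_1}\times\cdots\times H^{m_r}$ to $(H^0)^r$, where $H^m=\{\sum a_\ell z^\ell:\sum|a_\ell|\ell^m<\infty\}$. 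In these weighted spaces the linear part $f_i\mapsto\frac{d^{m_i}}{dz^{m_i}}(z^{m_i+q+k}f_i)$ is an isomorphism $H^{m_i}\to H^0$, and the rescaling makes the nonlinear part $O(\lambda)$; the IFT at $\lambda=0$ then gives the analytic majorant. Neither the truncation nor the choice of $H^m$ over $\cH_r$ is optional.
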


\begin{remark}\label{rem-top-bd}
Condition $(\ref{BD})$ means that, for all $i\geq 1$, 
\begin{equation}\label{BD-expand}
\left\|\left(\cS_j^{-1}\circ \pi (G)\right)^{(i+m_j)}\right\|\leq C\frac{\|G^{(i+q)}\|}{(i+m_j+q)\cdots (i+q+1)}.
\end{equation}
Hence, if $G\in \mathbb A_n^s$ if of order $\geq q+1$, and $r>0$ then 
$$
\vert \vert \mathcal S_j^{-1}\circ \pi (G)\vert \vert_r = \sum_{i\geq 1}\vert \vert \left(\mathcal S_i^{-1}\circ \pi (G)\right)^{(i+m_j)}\vert \vert r^{m_j+i}\leq C \sum_{i\geq 1} \vert \vert G^{(i+q)}\vert \vert r^{i+m_j} = Cr^{m_j-q}\|G\|_r.
$$
Hence, the maps $\mathcal S_j^{-1}\circ \pi$, $1\leq j\leq r$, are continuous with respect to the topology defined by $\cH_r$ (see $(\ref{topo})$).
\end{remark}
\begin{remark}\label{pi-formal}
The linear maps $\cS$ and $\pi$ do not need to be continuous with respect to the topology induced by ${\cH}_r$. In fact they can to be thought just as formal maps $\widehat{\cS}: \widehat{\cF_{r,\bf m}^{>0}}\ \to \ \widehat{\mathbb A_n^s}$, $\pi : \widehat{\mathbb A_n^s}\to Image \hskip .05cm (\widehat{\mathcal S}) \subset \widehat{\mathbb A_n^s}$ ($\widehat{\cF_{r,\bf m}^{>0}}$ denotes the formal completion of $\cF_{r,\bf m}^{>0}$). As mentioned above, the big denominators property leads to the fact $\mathcal S^{-1}\circ \pi$ is continuous with respect to that topology.
\end{remark}

\begin{thm}[Variation of the main theorem on non-linear systems of PDEs]
\label{var-main-thm}
Let ${\bf m} = (m_1,\ldots,m_r)\in \Bbb N^r$ be a fixed multiindex. Let $q$ be a nonnegative integer and let $d\geq -\min_i m_i$ be an integer. Let us consider a triple $(\mathcal S, \mathcal T,\pi )$  such that 
\begin{enumerate}

\item \ $\mathcal T$ is an analytic differential map of order $\bf m$ that strictly increases the order by $q-d$ and $j^{q}_0\mathcal T(0)=0$.
\item $\cT$ is  {\bf $d$-regular} that is~: for any formal map $F=(F_1,\ldots, F_r)$ with $\text{ord}_0F_i\geq m_i+1+d$, then
$$
\text{ord}_0\left(\frac{\partial W_i}{\partial u_{j,\alpha}}(x,\partial F)\right)\geq p_{j,|\alpha|},
$$
where
\begin{equation}\label{d-def-p}
p_{j,|\alpha|}=\max(0, |\alpha|+q+1-m_j-d).
\end{equation}
Here, we have set $\partial F:=\left(\frac{\partial^{|\alpha|} F_i}{\partial x^{\alpha}},\;1\leq i\leq r,\,0\leq |\alpha|\leq m_i  \right)$.
\item $\cS: \cF_{r,\bf m}^{>d}\ \to \ \mathbb A_n^s$ is linear and increases the order by $q-d$.

\item the linear map $\pi : \mathbb A_n^s \to Image \hskip .05cm (\mathcal S) \subset \mathbb A_n^s$ is a projection.

\item \ the map $\mathcal S$ admits left-inverse $\mathcal S^{-1}: Image (S)\to \mathbb A_n^r$ such that
the composition $\mathcal S^{-1}\circ \pi $ satisfies:


there exists $C>0$ such that for any $G\in \mathbb A_n^s$ of order $\geq q+1$, one has for all $1\leq i\leq r$
\begin{equation}
\label{d-BD}
\frac{d^{m_i} z^{q-d}\widehat{\cS_i^{-1}\circ \pi (G)}}{dz^{m_i}} \prec \ C\widehat{G}.
\end{equation}
\end{enumerate}
Then the equation ~:
$$
\mathcal S(F) = \pi \left( \mathcal T(F)\right).
$$
has an analytic solution $F\in \cF_{r,\bf m}^{>d}$.
\end{thm}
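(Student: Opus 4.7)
The plan is to parallel the proof of Theorem \ref{main-thm}, shifting the order bookkeeping by the integer $d$. With the substitution $q':=q-d$, the hypotheses of Theorem \ref{var-main-thm} become, formally, those of Theorem \ref{main-thm} with $q'$ in place of $q$: the $d$-regularity of $\cT$ is exactly regularity at order $q'$ (since $p_{j,|\alpha|}=\max(0,|\alpha|+q'+1-m_j)$), the strict order-increase by $q-d$ becomes strict order-increase by $q'$, and $(\ref{d-BD})$ is $(\ref{BD})$ with $q'$ substituted. What is genuinely new is that the solution is sought in $\cF_{r,{\bf m}}^{>d}$ rather than $\cF_{r,{\bf m}}^{>0}$, and that $q'$ may be negative when $d>q$; the condition $d\ge -\min_i m_i$ keeps the indices $m_j+d$ nonnegative so that everything remains well defined.

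First, I would construct the formal solution by induction on homogeneous degree. Starting from $F_j^{(k)}=0$ for $k\le m_j+d$, the strict order increase of $\cT$ by $q-d$ ensures that the degree-$(q+i)$ component of $\cT(F)$ depends only on components $F_j^{(k)}$ with $k<m_j+i+d$; thus applying $\cS^{-1}\circ\pi$ to this component gives $F_j^{(m_j+i+d)}$ uniquely and the induction proceeds. That the input to $\cS^{-1}\circ\pi$ has order $\ge q+1$ is guaranteed jointly by the $d$-regularity of $\cT$ and the condition $\text{ord}_0 F_j\ge m_j+1+d$.

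Second, convergence is established by the majorant method, as in the proof of Theorem \ref{main-thm}. Passing to the one-variable avatars $\widehat{F}_j(z)$ and using that $\widehat{\partial^{|\alpha|}F_j/\partial x^\alpha}(z)\prec d^{|\alpha|}\widehat{F}_j(z)/dz^{|\alpha|}$ together with the $d$-regularity bound on $\text{ord}_0\,\partial_{u_{j,\alpha}}W_i$, one majorises $\cT(F)$ term-by-term to obtain $\widehat{\cT(F)}(z)\prec \Theta(z,\widehat{F}_1(z),\ldots,\widehat{F}_r(z))$ for some analytic $\Theta$ vanishing to order $q+1$ in $z$. Applying $(\ref{d-BD})$ to $G=\cT(F)$ then turns the fixed-point equation $F=\cS^{-1}\circ\pi\circ\cT(F)$ into a system of majorant inequalities
$$\widehat{F}_j(z)\ \prec\ \Psi_j\bigl(z,\widehat{F}_1(z),\ldots,\widehat{F}_r(z)\bigr),\qquad 1\le j\le r,$$
with each $\Psi_j$ analytic at the origin and vanishing there. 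The implicit function theorem applied to the analytic system $w_j=\Psi_j(z,w)$ then yields analytic germs $w_j^\ast(z)$ majorising $\widehat{F}_j(z)$, whence each $F_j$ lies in $\cH_r$ for $r>0$ sufficiently small.

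The main obstacle is to verify that the power $z^{q-d}$ in $(\ref{d-BD})$ and the vanishing orders $p_{j,|\alpha|}$ from the $d$-regularity balance precisely so that each $\Psi_j(z,w)$ is genuinely analytic near the origin (no spurious negative powers of $z$) and vanishes there. This is the crucial point where the numerical data $q$, $d$, $m_j$ must fit together: the exponent $q-d$ in $(\ref{d-BD})$ is exactly what compensates for the extra vanishing order of $F\in\cF_{r,{\bf m}}^{>d}$, while the shifted formula for $p_{j,|\alpha|}$ controls the singular powers introduced by differentiation of $\widehat{F}_j$ in the majorant inequality.
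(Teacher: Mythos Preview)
Your overall strategy---reduce to the proof of Theorem \ref{main-thm} by the shift $q\mapsto q':=q-d$ and redo the formal-solution and majorant arguments with the order bookkeeping shifted by $d$---is correct and is exactly what the paper does. The paper's own proof is a two-line sketch: it states that the formal solution is built as before up to a shift by $d$, writes down the modified majorizing ODE system with $z^{m_j+d+k}f_j$ in place of $z^{m_j+k}f_j$, and checks that $d$-regularity gives $m_j+p_{j,l}+k+d-l-(q+k)\ge 1$, so that after the rescaling $t\mapsto\lambda t$ the nonlinear part is divisible by $\lambda^{q+k+1}$ and the implicit-function-theorem argument of section \ref{sec-sol-maj} goes through unchanged.

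Where your write-up is too optimistic is the convergence step. Applying $(\ref{d-BD})$ to $G=\cT(F)$ does \emph{not} yield a majorant inequality of the form $\widehat F_j(z)\prec\Psi_j(z,\widehat F_1,\ldots,\widehat F_r)$ with $\Psi_j$ depending only on the values $\widehat F_i(z)$. What $(\ref{d-BD})$ gives is
\[
\frac{d^{m_j}}{dz^{m_j}}\bigl(z^{q-d}\widehat F_j(z)\bigr)\ \prec\ C\,\widehat{\cT(F)}(z),
\]
and since $\cT$ is a differential operator of order $\mathbf m$, the right-hand side involves $d^l\widehat F_i/dz^l$ for $0\le l\le m_i$. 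The resulting majorant system is therefore a system of nonlinear ODEs with a (regular) singularity at $z=0$, not an algebraic fixed-point equation, and the ordinary implicit function theorem for analytic germs does not apply directly. This is precisely the difficulty that section \ref{sec-sol-maj} of the paper addresses: one rescales by $\lambda$, divides by $\lambda^{q+k}$, and then applies the implicit function theorem in the Banach spaces $H^{m_1}\times\cdots\times H^{m_r}\to (H^0)^r$. The $d$-regularity condition is exactly what makes the rescaled nonlinearity acquire the extra factor of $\lambda$ needed for that argument. Your final paragraph correctly flags that the balancing of exponents is the crux, but the resolution you propose (an algebraic $w_j=\Psi_j(z,w)$) skips this step; you should instead point to the ODE majorant system and the rescaling argument, noting only that the exponent check becomes $m_j+p_{j,l}+k+d-l\ge q+k+1$.
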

\subsection{From theorem \ref{main-thm} to theorem \ref{main1}}

Let us prove that theorem \ref{main1} is a corollary of theorem \ref{main-thm}. 
We are given an analytic differential action of order ${\bf m}$. Hence, given $R_{>q}\in \Bbb A_n^s$ of order $>q$ and $F\in \cF_{r,\bf m}^{>0}$, the action is defined, according to $(\ref{eq-conjugacy})$, by
$$
(id+F)_*(P^{(q)}+R_{>q})= P^{(q)}+R_{>q} +\cS(F)+ \cT(R_{>q}, F).
$$
and $\cT(R_{>q}, F)= W(x, j^{{\bf m}}_xF)$ for some analytic map $W$ at the origin in the jet-space.
We assume that $\mathcal S$ and $\pi $ are linear operators such that, one has
\begin{equation*}
\mathcal S\left(\cF_{r,\bf m}^{(i)}\right) \subset
(\mathbb A_n^s)^{(q+i)}, \ \ \pi \left((\mathbb A_n^s)^{(i)}\right) \subset (\mathbb A_n^s)^{(i)} \cap \ Image (\mathcal S)
\end{equation*}
The projection operator $Id-\pi$ is supposed to define the formal normal form space. Therefore, that $(Id+F)$ conjugates $P^{(q)}+R_{>q}$ to a normal form means that
$$
\pi\left((id+F)_*(P^{(q)}+R_{>q})- P^{(q)}\right)=0.
$$
In other words, $F$ is solution to the problem
$$
\cS(F)+ \pi\left(R_{>q} +W(x, j^{{\bf m}}_xF)\right)=0.
$$
Let us set $\cT(F):=-(R_{>q} +W(x, j^{{\bf m}}_xF))$. According the the filtering property $(\ref{eq-filtering1})$ (resp. $(\ref{eq-filtering2})$), $\cS$ (resp. $\cT$ strictly) increases the order by $q$. Moreover, $\cT$ is regular since the action is, so that $W$ is regular. Moreover, $\cT(0)=-R_{>q}$ has order $>q$ at the origin. Hence, the first three points of definition \ref{def-big-denominators} are satisfied.

The operator ${\mathcal S}$ can have a kernel. So each space $\cF_{r,\bf m}^{(i)}$ of ``homogeneous'' polynomial mapping degree $i$, can be decomposed as a direct sum into
$$
\cF_{r,\bf m}^{(i)}= \ker {\mathcal S}^{(i)}_{|\cF_{r,\bf m}^{(i)}}\oplus C^{(i)}.
$$
for some {\bf chosen} subspace $C^{(i)}$. This choice defines a right inverse of ${\mathcal S}^{(i)}$~: 
$$({\mathcal S}^{(i)})^{-1}:\text{Image } {\mathcal S}^{(i)}\rightarrow C^{(i)}.$$
And ${\mathcal S}^{(i)}$ is injective on $C^{(i)}$. 

Estimates $(\ref{estimates-big-denom})$ can rephrased as follows~: for any any $i\ge 1$ and $A^{(i+q)}\in (\mathbb A_n^s)^{(i+q)}$, we have
$$
\left\|\left(\mathcal S_j^{-1}\circ \pi (A^{(i+q)})\right)^{(i+m_j)}\right\| \le C\frac {\vert \vert A^{(i+q)}\vert \vert }{i^{m_j}},\quad j=1,\ldots, r \ \ 
$$
with a constant $C>0$ which does not depend on $i$ and on $G$. Hence, $(\ref{BD})$ is satisfied.

\section{Proof of the main theorem \ref{main-thm}}
\label{sec-proofs}

The proof goes as follow~: We first prove to that there exists a unique formal solution to the problem. Then, we show that this formal solution is dominated by a solution of an analytic system of nonlinear ordinary differential equations that has a a kind of {\it regular singularity} at the origin. We then prove that this last solution is indeed analytic in a neighborhood of the origin.

\subsection{Existence of a formal solution}

Given equation (\ref{eq-main}), with $(\mathcal S, \mathcal T, \pi)$ having the big denominators property of order $\bf m$, let us first show that there exists a {\bf formal solution} $F$ to 
$$F = \mathcal S^{-1}(\pi ( \mathcal T(F))), $$

We look for an $r$-tuple of formal power series $F=\sum_{k\geq 1} F^{(k)}$ where $F^{(k)}:=(F_1^{(m_1+k)}, \ldots, F_r^{(m_r+k)})$, $F_j^{(m_j+k)}$ is an homogeneous polynomial of degree $m_j+k$. We shall say such a r-tuple $F^{(k)}$ is ``homogeneous of degree $k$'' when it does not lead to confusion. The order of an $r$-tuple of formal power series is the greatest integer $i$ such that $F^{(i)}\neq 0$.

Let us prove by induction on the degree $k\geq 1$, that $F^{(k)}$ is uniquely determined by
$$
F^{(1)}:= \left({\mathcal S}^{-1}\circ\pi({\mathcal T}(0))\right)^{(1)},\quad F^{(k)}:= \left({\mathcal S}^{-1}\circ\pi({\mathcal T}(F_{m+1}+\cdots+F_{k-1}))\right)^{(k)},\; k>1.
$$ 
By assumption, we have $\text{ord}_0\mathcal T(0)\geq q+1$. Since $\mathcal T$ strictly increases the order by $q$, then 
$$
\text{ord}_0({\mathcal T}(F)-{\mathcal T}(0))>q+1,\;\text{i.e.}\;j_0^{q+1}({\mathcal T}(F)-{\mathcal T}(0))=0.
$$
According to the big denominators properties, we have
$$
\text{ord}_0\left(\left(\mathcal S^{-1}(\pi ( \mathcal T( F)))- \mathcal S^{-1}(\pi ( \mathcal T(0)))\right)\right)>1.
$$
On the other hand, we have
$$
F= {\mathcal S}^{-1}\circ\pi({\mathcal T}(0))+ \left(\mathcal S^{-1}(\pi ( \mathcal T( F)))- \mathcal S^{-1}(\pi ( \mathcal T(0)))\right).
$$
Therefore, $F^{(1)}$ is uniquely defined by
$$
F^{(1)}:= \left(\mathcal S^{-1}(\pi ( \mathcal T(0)))\right)^{(1)}.
$$
Assume that $F^{(1)},\ldots, F^{(k-1)}$ are known by induction. Let us set $S_{k-1}:= \sum_{i=1}^{k-1}F^{(i)}$. Let us show that $F^{(k)}=\left({\mathcal S}^{-1}\circ\pi({\mathcal T}(S_{k-1}))\right)^{(k)}$. Indeed, we have 
$$
F= {\mathcal S}^{-1}\circ\pi({\mathcal T}(S_{k-1}))+ \left(\mathcal S^{-1}(\pi ( \mathcal T( F)))- \mathcal S^{-1}(\pi ( \mathcal T(S_{k-1})))\right)
$$
We have 
$$
\text{ord}_0\left(\mathcal S^{-1}(\pi ( \mathcal T( F)))- \mathcal S^{-1}(\pi ( \mathcal T(S_{k-1})))\right)\geq \text{ord}_0\left( \mathcal T(F)- {\mathcal T(S_{k-1}})\right)-q
$$
Since ${\mathcal T}$ increases the order by $q$, we have 
$$
\text{ord}_0\left( \mathcal T(F)- {\mathcal T(S_{k-1}})\right)>\text{ord}_0\left( F-S_{k-1}\right)+q.
$$
Hence, we have 
$$
\text{ord}_0\left(\mathcal S^{-1}(\pi ( \mathcal T( F)))- \mathcal S^{-1}(\pi ( \mathcal T(S_{k-1})))\right)> k
$$
and we are done.

\subsection{Existence of an analytic majorant}

In the previous section, we proved that the problem has a formal solution. We shall prove that this solution is dominated by a germ of analytic map. This germ is defined to the solution of some system of analytic nonlinear ordinary differential equations.
The key statement in the proof of Theorem \ref{main-thm} is as follows.
\begin{prop}
\label{prop-to-construct-G}
Under assumptions of theorem \ref{main-thm}, there exists a nonnegative integer $k$, a germ $G(t,u_{j,l},\;1\leq i\leq r,\; 0\leq l\leq m_j)$ of analytic function of $(m_1+1)+\cdots +(m_r+1)+1$ variables and there exists germs of analytic functions $f_1,\ldots, f_r$ of one variable vanishing at the origin such that, for all $1\leq j\leq r$,
\begin{enumerate}
\item 
\begin{equation}\label{diff-equ}
\frac{d^{m_j}z^{m_j+q+k}f_j}{dz^{m_j}} (z) = CG\left(z,\frac{d^lz^{m_j+k}f_j(z)}{dz^l},\;1\leq j\leq r,\; 0\leq l\leq m_j\right). 
\end{equation}
Here, $C$ is the constant involved in the big denominators property. 
\item 
\begin{equation}\label{domination}
\widehat{F_j(z)-F_j^{\leq m_j+k}}  \prec   z^{m_j+k}f_i(z),\quad j=1,\ldots, r.
\end{equation}
\end{enumerate}
\end{prop}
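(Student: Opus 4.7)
The plan is to convert the fixed-point equation $F=\cS^{-1}\circ\pi\circ\cT(F)$ satisfied by the formal solution constructed in the previous subsection into a nonlinear ordinary differential inequality by combining the big denominators estimate $(\ref{BD})$ with the classical majorant method applied to the analytic map $W$ that defines $\cT$. Upgrading this majorant inequality to equality defines the ODE system $(\ref{diff-equ})$ with an analytic right-hand side $G$; the $f_j$ are then the unique analytic solution of this regular-singular system.

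First, the formal solution $F$ built in the previous subsection satisfies $F_j=\cS^{-1}_j\circ\pi\circ\cT(F)$, so $(\ref{BD})$ applied with argument $\cT(F)$ gives
\[
\frac{d^{m_j}z^{q}\widehat{F_j}}{dz^{m_j}}\ \prec\ C\,\widehat{\cT(F)},\qquad j=1,\dots,r.
\]
Since $W$ is an analytic germ with $W(x,0)=0$, replacing the coefficients of its Taylor expansion by their absolute values produces a convergent majorant $\widetilde W(t,u_{i,\alpha})$, and the regularity hypothesis forces every monomial of $\widetilde W$ touching $u_{i,\alpha}$ to carry a factor $t^{p_{i,|\alpha|}}$. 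Using the majorant inequalities satisfied by $\|\cdot\|$, namely $\widehat{fg}\prec\widehat f\cdot\widehat g$ and $\widehat{\partial^{|\alpha|}f/\partial x^\alpha}\prec d^{|\alpha|}\widehat f/dz^{|\alpha|}$, one obtains
\[
\widehat{\cT(F)}(z)\ \prec\ \widetilde W\!\left(z,\,\frac{d^{|\alpha|}\widehat{F_i}}{dz^{|\alpha|}}\right).
\]

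Next, I look for a majorant of the form $\widehat{F_j}\prec P_j+z^{m_j+k}f_j$, where $P_j$ is an explicit polynomial of degree $\le m_j+k$ dominating the Taylor polynomial $F_j^{\le m_j+k}$ and $f_j$ is an analytic germ with $f_j(0)=0$. Substituting this ansatz into $\widetilde W$, each occurrence of $u_{i,\alpha}$ contributes $z^{p_{i,|\alpha|}}$ from the $t$-factor and at least $z^{m_i+k-|\alpha|}$ from differentiating $z^{m_i+k}f_i$ (recall $|\alpha|\le m_i$); in both regimes $|\alpha|\ge m_i-q-1$ and $|\alpha|<m_i-q-1$ one verifies that the net $z$-order is at least $q+k$. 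Thus $z^{q+k}$ divides the majorant expression term by term, and dividing it out yields an analytic function $G(t,u_{j,\ell})$ such that turning the majorant inequality into an equality produces exactly the ODE $(\ref{diff-equ})$; the domination $(\ref{domination})$ is then built into the construction.

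Finally, $(\ref{diff-equ})$ is an analytic nonlinear ODE system with a regular singularity at $z=0$ whose leading behaviour is governed by the prefactor $z^{m_j+q+k}$ on the left; a standard Cauchy--majorant / contracting fixed-point argument on a Banach space of analytic germs (to be carried out in the next subsection) produces the unique analytic solution $(f_1,\dots,f_r)$ with $f_j(0)=0$. The main obstacle is the combinatorial bookkeeping in the preceding step: the regularity exponents $p_{i,|\alpha|}$ must combine with the $z$-orders from derivatives of $z^{m_i+k}f_i$ so that, after division by $z^{q+k}$, no negative powers of $z$ survive in the expression defining $G$. Choosing $k$ large enough to absorb the contribution of the polynomial remainder $P_j$ (whose size is controlled by finitely many iterations of the fixed-point map applied to $\cT(0)$) is what makes this accounting close up cleanly; once in place, everything else is classical majorant theory.
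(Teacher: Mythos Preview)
Your overall strategy is right, but there is a genuine gap in how you use the regularity hypothesis. You write that ``the regularity hypothesis forces every monomial of $\widetilde W$ touching $u_{i,\alpha}$ to carry a factor $t^{p_{i,|\alpha|}}$''. That is not what regularity says. The definition requires only that $\text{ord}_0\big(\partial W_i/\partial u_{j,\alpha}(x,\partial F)\big)\geq p_{j,|\alpha|}$ \emph{after substituting} a map $F$ of the prescribed order; it says nothing about the bare monomials of $W$. For instance $W=u_{1,0}^2$ has no $t$-factor at all, yet $\partial W/\partial u_{1,0}(x,\partial F)=2F_1$ has order $\geq m_1+1$, which can easily exceed $p_{1,0}$. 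So your majorant $\widetilde W$ obtained by taking absolute values of coefficients does not inherit the $t^{p_{i,|\alpha|}}$ weights, and your power-counting ``the net $z$-order is at least $q+k$'' breaks down.

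The paper repairs this by Taylor expanding $W_i(x,\partial F)$ at the point $(x,\partial F^{\leq k})$, not at $0$, to order $2$: the constant term $W_i(x,\partial F^{\leq k})-\cS_i(F^{\leq k})$ then has order $\geq k+q+1$ (using that $F$ is the formal solution together with the filtering properties of $\cS$ and $\cT$); the first-order coefficients $\partial W_i/\partial u_{j,\alpha}(x,\partial F^{\leq k})$ carry the weight $p_{j,|\alpha|}$ precisely by regularity; and the second-order remainder involves a product of two factors $\partial^{\alpha}F_j^{>m_j+k}\cdot\partial^{\alpha'}F_{j'}^{>m_{j'}+k}$, whose combined order is $\geq q+k+1$ once $k\geq q+1$. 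This three-piece decomposition is what produces the explicit analytic $G$ in $(\ref{def-G})$ and makes the division by $z^{q+k}$ legitimate. Correspondingly the big-denominator estimate is applied not to $\cT(F)$ but to $\cT(F)-\cS(F^{\leq k})$, yielding the inequality $(\ref{BD-action})$ for $\widehat{F_j^{>m_j+k}}$ rather than for $\widehat{F_j}$. Your outline of the final step (rescaling plus an implicit-function-theorem argument on a Banach space of analytic germs) matches the paper.
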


We shall first construct the majorant function $G$ and we shall then prove that if the $f_j$'s are formal solutions of the system $(\ref{diff-equ})$, then the estimates $(\ref{domination})$ hold. On the other hand, we shall prove that the system $(\ref{diff-equ})$ has indeed a germ of analytic solution vanishing at the origin. The regularity assumption will say that the system has only a regular singularity at the origin.

\subsubsection{Construction of majorant system of differential equations with regular singularity}\label{proof-main}

The function $G$ of $m+2$ variables in Proposition \ref{prop-to-construct-G} can be constructed as follows,
from the map $W: \ J^m\mathbb A_n^r \to \mathbb R^s$ in Definition \ref{def-diff-map} and the constant $C$
in item 2.2 of Definition \ref{def-big-denominators}. Denote by $$v = \left(x_1,...,x_n, u_{j, \alpha }\right), \ \
  j\in \{1,...,k\}, \ \alpha = (\alpha _1,..., \alpha _n), \ \vert \alpha \vert \le m$$
   the local coordinates in $J^m\mathbb A_n^r$, where $u_{j, \alpha }$ corresponds to the partial
   derivative $\partial^{|\alpha|} /\partial x_1^{\alpha _1}\cdots \partial x_n^{\alpha _n}$ of the $j$-th component
   of a vector function $F\in \mathbb A_n^r$. 

The mapping $W = (W_1,...,W_s)$ is analytic in neighborhood of $0$ and $W(0)=0$. We recall that the equation has a unique formal solution $F=(F_1,\ldots, F_r)$. 
Let $k\geq 1$ and let us define
$$
F^{\leq k}:=(F_1^{\leq k+m_1},\ldots , F_r^{\leq k+m_r}).
$$
In other words, $F^{\leq k}$ is the vector which coordinates are the $(k+m_j)$-jet a the origin of $F_j$, $1\leq j\leq r$.

In order to simplify the notation, we shall also write
$$
\partial F^{\leq k}:=\left(\frac{\partial^{|\alpha|} F_i^{\leq m_i+k}}{\partial x^{\alpha}}, 1\leq i\leq r,\; |\alpha|\leq m_i\right)
$$

Let us write the formal solution as a sum $F:=F^{\leq k}+F^{>k}$ where $F^{>k}=(F_1^{>m_1+k},\ldots, F_r^{>m_r+k})$ and $\text{ord}_0(F_i^{>m_i+k})> m_i+k$. Let us Taylor expand $\cT(F)=W(x,\partial F)$ at the point $(x,\partial F^{\leq k})$ at order 2 with integral rest. We have
\begin{eqnarray*}
W_i\left(x,\partial F\right)& =& W_i(x,\partial F^{\leq k})+\sum_{j=1}^r\sum_{|\alpha|\leq m_j}\frac{\partial W_i}{\partial u_{j,\alpha}}(x,\partial F^{\leq k})\frac{\partial^{|\alpha|} F_j^{>m_j+k}}{\partial x^{\alpha}}\\
& &+ \sum_{j,j'=1}^r\sum_{|\alpha|,|\alpha'|\leq m_j}H_{i,j,j',\alpha,\alpha'}(x,\partial F^{\leq k}, \partial F^{>k} )\frac{\partial^{|\alpha|} F_j^{>m_j+k}}{\partial x^{\alpha}}\frac{\partial^{|\alpha'|} F_{j'}^{>m_j'+k}}{\partial x^{\alpha'}}.
\end{eqnarray*}
Here, $H_{i,j,j',\alpha,\alpha'}$ is analytic in all its variables at the origin. The equation to solve is $\cS(F) = \pi(W(x,\partial F))$.
{\bf We assume $\cS$ to be linear and increases the order by $q$. We also assume that $\pi$ is a linear projection}. This leads to 
\begin{equation*}
F^{>k}=\cS^{-1}\circ\pi\left(W\left(x,\partial F\right)\right)-F^{\leq k}.
\end{equation*}
Since $F^{>k}$ is of order $>k$ at the origin, then 
\begin{equation}\label{equation-k}
\cS(F^{>k})=\pi(W\left(x,\partial F\right))-\cS(F^{\leq k})=\pi\left(W\left(x,\partial F\right)-\cS(F^{\leq k})\right)
\end{equation}
has order $\geq k+q+1$ at the origin.
Since $\cS$ (resp. $\cT$) increases (resp. strictly) the order by $q$, we have
\begin{eqnarray*}
\text{ord}_0\left(\cS(F^{\leq k})-\cS(F)\right)&\geq &\text{ord}_0\left(F^{\leq k}-F\right)+q\geq k+1+q\\
\text{ord}_0\left(\cT(F^{\leq k})-\cT(F)\right)& > &\text{ord}_0\left(F^{\leq k}-F\right)+q> k+1+q.
\end{eqnarray*}
Therefore, we have 
\begin{equation}\label{ini-equ}
\text{ord}_0\left(\cS(F^{\leq k})-\cT( F^{\leq k})\right)\geq k+1+q.
\end{equation}

Therefore, we can majorize as follow~:
\begin{eqnarray}
\overline{W_i\left(x,\partial F\right)-\cS_i(F^{\leq k})} & =& w_{i,k}+\sum_{j=1}^r\sum_{|\alpha|\leq m_j}\overline{\frac{\partial W_i}{\partial u_{j,\alpha}}}(x,\partial \bar F^{\leq k})\frac{\partial^{|\alpha|} \bar F_j^{>m_j+k}}{\partial x^{\alpha}}\\
& &+ \sum_{j,j'=1}^r\sum_{|\alpha|,|\alpha'|\leq m_j}\bar H_{i,j,j',\alpha,\alpha'}(x,\partial \bar F^{\leq k}, \partial \bar F^{>k} )\frac{\partial^{|\alpha|} \bar F_j^{>m_j+k}}{\partial x^{\alpha}}\frac{\partial^{|\alpha'|} \bar F_{j'}^{>m_{j'}+k}}{\partial x^{\alpha'}}.\nonumber
\end{eqnarray}
where we have set $w_{i,k}:=\overline{W_i(x,\partial F^{\leq k})-\cS_i(F^{\leq k})}$.

We recall that 
\begin{equation}\label{def-p}
p_{j,|\alpha|}=\max(0, |\alpha|+q+1-m_j).
\end{equation}
According to $\ref{ini-equ}$ and by assumption, we have
\begin{eqnarray*}
\text{ord}_0(w_{i,k})&\geq &k+q+1\\
\text{ord}_0\left(\frac{\partial W_i}{\partial u_{j,\alpha}}(x,\partial F^{\leq k})\right)&\geq &p_{j,|\alpha|}\\
\end{eqnarray*}

Let us fix the inter $k>0$.
Since each function $H_{i,j,j',\alpha,\alpha'}(x,\partial F^{\leq k}, u_{p,\beta}, 1\leq p\leq r, 0 \leq |\beta|\leq m_p )$ is analytic in a neighborhood of the origin in $\Bbb R^t$, then there exists positive constants $M, c$ such that
$$
H_{i,j,j',\alpha,\alpha'}(x,\partial F^{\leq k}, u_{p,\beta}, 1\leq p\leq r, 0 \leq |\beta|\leq m_p )\prec\frac{M}{1-c\left(x_1+\ldots + x_n+\sum_{p=1}^r\sum_{k=0}^{m_r}\sum_{|\alpha|=k}u_{p,\alpha}\right)}.
$$
Furthermore, each function $w_{i,k}$ and $\frac{\partial W_i}{\partial u_{j,\alpha}}(x,\partial F^{\leq k})$ is analytic in same neighborhood of the origin in $\Bbb R^n$. Hence, we have
\begin{eqnarray*}
w_{i,k}(x)&\prec &\frac{M(x_1+\cdots x_n)^{k+q+1}}{1-c(x_1+\cdots x_n)},\\
\frac{\partial W_i}{\partial u_{j,\alpha}}(x,\partial F^{\leq k})&\prec& \frac{M(x_1+\cdots x_n)^{p_{j,|\alpha|}}}{1-c(x_1+\cdots x_n)}
\end{eqnarray*}
As a consequence, we have
\begin{eqnarray}\label{majorant}
\overline{W_i\left(x,\partial F\right)-\cS_i(F^{\leq k})} & \prec & \frac{M}{1-c(x_1+\cdots x_n)}\left[ (x_1+\cdots x_n)^{k+q+1}+\right.\\
&&\left.\sum_{j=1}^r\sum_{|\alpha|\leq m_j}(x_1+\cdots x_n)^{p_{j,|\alpha|}}\frac{\partial^{|\alpha|} \bar F_j^{>m_j+k}}{\partial x^{\alpha}}\right]\nonumber\\
& &+\frac{M\left( \sum_{j,j'=1}^r\sum_{|\alpha|,|\alpha'|\leq m_j}\frac{\partial^{|\alpha|} \bar F_j^{>m_j+k}}{\partial x^{\alpha}}\frac{\partial^{|\alpha'|} \bar F_{j'}^{>m_{j'}+k}}{\partial x^{\alpha'}}\right)}{1-c\left(x_1+\ldots +x_n+\sum_{p=1}^r\sum_{k=0}^{m_r}\sum_{|\alpha|=k}u_{p,\alpha}\right)}.\nonumber
\end{eqnarray}

We will show that, if $k$ is large enough,  then we can find germs of analytic functions $f_{j,k}(z)$, $1\leq j\leq r$, vanishing at the origin, such that the formal solution to equation $(\ref{eq-main})$, $F=(F_1^{\leq m_1+k}+F_1^{>m_1+k},\ldots, F_r^{\leq m_r+k}+F_r^{>m_r+k})$ satisfies $\widehat F_j^{>m_j+k}(z)\prec z^{m_j+k}f_{j,k}(z)$ for $1\leq j\leq r$. 
\begin{lem}\label{lem-deriv}
Let $Q = (q_1,..., q_n)$ multiindex such that $\vert Q \vert = l$. Then, there exits a positive constant $c_l$ such that for any power series $f,g$ of $n$ variables
\begin{enumerate} 
\item 
$\widehat{\frac{\partial ^lf}{\partial x^Q }}\prec \frac{d^l\widehat f}{dz^i}$, 
$\widehat{\sum_{|Q|=l}\frac{\partial^l f}{\partial x^{Q}}}\prec c_l \frac{d^l\widehat f}{dz^l}$, 
\item $\widehat{fg}(z)\prec \widehat{f}(z)\widehat{g}(z)$.
\end{enumerate}
\end{lem}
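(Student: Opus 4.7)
The whole lemma rests on a single identification: for any formal series $h=\sum_\alpha h_\alpha x^\alpha$ in $n$ variables, the series $\widehat h(z)$ coincides with $\bar h(z,\dots,z)$, where $\bar h=\sum_\alpha |h_\alpha|x^\alpha$. Indeed, substituting $x_1=\dots=x_n=z$ into $\bar h$ collects monomials by total degree, which is exactly how $\|h^{(i)}\|$ is defined. In particular $\widehat h$ has nonnegative coefficients, and domination $\prec$ between such functions is exactly pointwise inequality of Taylor coefficients.

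\textbf{Item 1 (derivatives).} I would compute $d^l/dz^l$ of $\bar f(z,\dots,z)$ via the multivariable chain rule. Applying $\partial/\partial z=\sum_{i=1}^n \partial/\partial x_i$ iteratively $l$ times and expanding by the multinomial theorem gives
\begin{equation*}
\frac{d^l\widehat f}{dz^l}(z)=\sum_{|Q|=l}\binom{l}{Q}\bigl(\partial^Q \bar f\bigr)(z,\dots,z)=\sum_{|Q|=l}\binom{l}{Q}\,\widehat{\tfrac{\partial^l f}{\partial x^Q}}(z),
\end{equation*}
where the second equality uses that $\partial^Q \bar f$ has nonnegative coefficients and that $\overline{\partial^Q f}=\partial^Q\bar f$ since the factorial weights $\alpha!/(\alpha-Q)!$ are positive. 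Every summand on the right has nonnegative coefficients, so for any fixed $Q_0$ with $|Q_0|=l$ one has $\binom{l}{Q_0}\widehat{\partial^l f/\partial x^{Q_0}}\prec \tfrac{d^l\widehat f}{dz^l}$, which (since $\binom{l}{Q_0}\ge 1$) yields the first estimate. Summing over $Q$ and using that $\widehat{\,\cdot\,}$ is sub-additive ($\widehat{g_1+g_2}\prec \widehat{g_1}+\widehat{g_2}$ by the triangle inequality on coefficients) gives the second estimate with the constant $c_l=\sum_{|Q|=l}\binom{l}{Q}^{-1}$.

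\textbf{Item 2 (products).} For the Cauchy product one has $(fg)_\gamma=\sum_{\alpha+\beta=\gamma}f_\alpha g_\beta$, hence
\begin{equation*}
\|(fg)^{(i)}\|=\sum_{|\gamma|=i}|(fg)_\gamma|\le \sum_{|\alpha|+|\beta|=i}|f_\alpha||g_\beta|=\sum_{j+k=i}\|f^{(j)}\|\,\|g^{(k)}\|,
\end{equation*}
and the right-hand side is exactly the coefficient of $z^i$ in $\widehat f(z)\,\widehat g(z)$. This is the content of $\widehat{fg}\prec \widehat f\cdot\widehat g$.

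\textbf{Main (non-)obstacle.} There is no real obstacle; the statement is a bookkeeping lemma. The only point deserving care is the chain-rule step in item 1, where one must track the multinomial weights $\binom{l}{Q}$ to guarantee that the constants $c_l$ are finite and do not depend on $f$. Everything else is a direct comparison of coefficients of nonnegative series.
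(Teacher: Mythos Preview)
Your proof is correct and rests on the same combinatorial identity as the paper's, but you package item~1 more cleanly. The paper proves the coefficient inequality
\[
\frac{\alpha!}{(\alpha-Q)!}\;\le\;\frac{|\alpha|!}{(|\alpha|-|Q|)!}\cdot\frac{Q!}{|Q|!}
\]
directly from Vandermonde's identity $\sum_{i+j=k}\binom{m}{i}\binom{n-m}{j}=\binom{n}{k}$, and then sums to get $c_l=\sum_{|Q|=l}Q!/l!$. You instead expand the operator $\bigl(\sum_i\partial_i\bigr)^l$ by the multinomial theorem and apply it to $\bar f$ on the diagonal, obtaining the \emph{exact} decomposition $\tfrac{d^l\widehat f}{dz^l}=\sum_{|Q|=l}\binom{l}{Q}\widehat{\partial^Q f}$ with nonnegative summands; both inequalities then follow by dropping terms. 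This is the same Vandermonde content viewed from the other side, and your $c_l=\sum_{|Q|=l}\binom{l}{Q}^{-1}$ is literally the paper's constant. Your treatment of item~2 via the Cauchy-product triangle inequality is identical to the paper's (which just cites $\overline{fg}\prec\bar f\,\bar g$).
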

\begin{proof}
Let us write $f=\sum f_{\al}x^{\al}$. We recall that $\|f^{(k)}\|=\sum_{|\alpha|=k}|f_{\alpha}|$. Hence, $\hat f(z)=\bar f(z,\ldots, z)$.
So, the last point follow from the fact that $\overline{fg}\prec \bar f\bar g$.
We have
$$
\frac{\partial^l f}{\partial x^{Q}}= \sum \frac{\al_1!\cdots\al_n!}{(\al_1-q_1)!\cdots (\al_n-q_n)!}f_{\al}x^{\al-Q}.
$$
We have, if $0\leq m\leq n$ and $0\leq k \leq n$, then 
$$
(1+t)^m(1+t)^{n-m}=\left(\sum_{i=0}^mC_m^it^i\right)\left(\sum_{j=0}^{n-m}C_{n-m}^jt^j\right)=\sum_{k=0}^n\left(\sum_{\scriptstyle{i+j=k\atop 0\leq i\leq m,\; 0\leq j\leq n-m}} C_m^iC_{n-m}^j\right)t^k,
$$
where $C_n^k=\frac{n!}{k!(n-k)!}$. Therefore, we have 
$$
C_n^k=\sum_{\scriptstyle{i+j=k\atop 0\leq i\leq m,\; 0\leq j\leq n-m}} C_m^iC_{n-m}^j. 
$$
As a consequence, we obtain by induction on $n\geq 1$, that
\begin{equation}\label{factorial}
\frac{\al_1!\cdots\al_n!}{(\al_1-q_1)!\cdots (\al_n-q_n)!}\leq \frac{|\al|!}{(|\al|-|Q|)!}\frac{q_1!\cdots q_n!}{|Q|!}\leq  \frac{|\al|!}{(|\al|-|Q|)!}.
\end{equation}
Therefore for any $p\ge 0$ one has
$$
\left\|\left(\frac{\partial^l f}{\partial x^{Q}}\right)^{(p)}\right\|\leq \left\|f^{(p+i)}\right\|\frac{(p+l)!}{p!}=\left(\frac{d^{l}\widehat{f}}{dz^{l}}\right)^{(p)}.
$$
which proves the first part of the first point. 
The second part is obtain as follow
\begin{eqnarray*}
\widehat{\sum_{|Q|=l}\frac{\partial^l f}{\partial x^{Q}}}(z)=\sum_{|Q|=l}\frac{\partial^l \bar f}{\partial x^{Q}}(z,\ldots,z) & = &\sum_{\alpha}\sum_{|Q|=l} \frac{\al_1!\cdots\al_n!}{(\al_1-q_1)!\cdots (\al_n-q_n)!}|f_{\al}|z^{|\al|-l}\\
&\prec &\sum_{p\geq l}\sum_{|\alpha|=p}\sum_{|Q|=l} \frac{p!}{(p-l)!}\frac{q_1!\cdots q_n!}{l!}|f_{\al}|z^{p-l}\\
&\prec &\sum_{p\geq l} \frac{p!}{(p-l)!}\left(\sum_{|Q|=l}\frac{q_1!\cdots q_n!}{l!}\right)\sum_{|\alpha|=p}|f_{\al}|z^{p-l}\\
&\prec &c_l\sum_{p\geq l} \frac{p!}{(p-l)!}\|f_{\al}^{(p)}\|z^{p-l}\\
&\prec &c_l\frac{d^{l}\widehat{f}}{dz^{l}}(z)
\end{eqnarray*}
Here we have set $c_l:=\left(\sum_{|Q|=l}\frac{q_1!\cdots q_n!}{l!}\right)$ if $l>0$ and $c_0:=1$.
\end{proof}
\begin{remark}\label{belit-lemma}
The previous lemma holds true if the norm of definition \ref{def-norms} is replaced by
the modified Belitskii norm~: indeed, we have
\begin{eqnarray*}
\left\|\frac{\partial^l f^{(p+l)}}{\partial x^{Q}}\right\|^2&= &\sum \left(\frac{\al_1!\cdots\al_n!}{(\al_1-q_1)!\cdots (\al_n-q_n)!}\right)^2|f_{\al}|^2\|x^{\al-Q}\|^2\\
&=&\sum_{\al} \left(\frac{\al_1!\cdots\al_n!}{(\al_1-q_1)!\cdots (\al_n-q_n)!}\right)^2|f_{\al}|^2\frac{(\al-Q)!}{(|\al|-|Q|)!}\\
&= & \sum \frac{\al_1!\cdots\al_n!}{|\al|!}|f_{\al}|^2\frac{\al_1!\cdots\al_n!|\al|!}{(\al-Q)!(|\al|-|Q|)!}\\
&\leq & \|f^{(p+l)}\|^2 \left(\frac{|\al|!}{(|\al|-|Q|)!}\right)^2.
\end{eqnarray*}
The last inequality is due to $(\ref{factorial})$. The product inequality is proved in \cite{stolo-lombardi}[proposition 3.6].

\end{remark}

Therefore, if $\widehat F_j^{>m_j+k}(z)\prec z^{m_j+k}f_{j,k}(z)$ and $0\leq l\leq m_j$, then 
$$
\widehat{\sum_{|\alpha|=l}\frac{\partial^{|\alpha|}F_j^{>m_j+k}}{\partial x^{\alpha}}}\prec c_l \frac{d^l  z^{m_j+k}f_{j,k}}{dz^l}(z).
$$
This leads us to define the following germ, at the origin, of analytic function of $(m_1+1)+\cdots +(m_r+1)+1$ variables~:
\begin{eqnarray}
G(z,z_{j,l},\,1\leq j\leq r,\;0\leq l\leq m_p)&:=& \frac{M}{1-cnz}\left[(nz)^{k+q+1}+\sum_{j=1}^r\sum_{l=0}^{m_p}(nz)^{p_{j,l}}c_l z_{j,l} \right]\nonumber\\
&& + \frac{M\left( \sum_{j,j'=1}^r\sum_{\scriptsize 1\leq l\leq m_j\atop 1\leq l'\leq m_{j'}}c_lc_{l'}z_{j,l}z_{p',l'}\right)}{1-c\left(nz+\sum_{j=1}^r\sum_{l=0}^{m_r}c_l z_{j,l}\right)}\label{def-G}
\end{eqnarray}

Let us consider the following system of analytic nonlinear differential operators
\begin{equation}\label{equ-f}
{\mathcal L}_i(f):=\frac{d^{m_i}z^{q+m_i+k}f_{i}}{dz^{m_i}} (z) - CG\left(z,\frac{d^lz^{m_j+k}f_{j}}{dz^l}(z),\;1\leq j\leq r,\; 0\leq l\leq m_j\right). 
\end{equation}
Here, the constant $C$ is the one defined by the big denominators property and $f=(f_{1},\ldots, f_r)$ is the unknown function, vanishing at the origin. 

Let $F$ be the formal solution of $F={\mathcal S}^{-1}\circ \pi ({\mathcal T}(F))$ as defined previously. Let us prove that, for all $1\leq j\leq r$, $\widehat F_j^{> m_j+k}\prec z^{m_j+k}f_j$. We recall that $f_j$ vanishes at the origin.
 
Let us prove by induction on the degree $i\geq m_j+k+1$, $1\leq j\leq r$, of the Taylor expansion that $\|F^{(i)}_j\|\leq f_{i-m_j-k}$. We recall that 
$F^{>k}=(F_1^{>m_1+k},\ldots, F_r^{>m_r+k})$. According to equation $(\ref{equation-k})$, we have
$$
\cS(F^{>k})= \pi\left(\cT(F)-\cS(F^{\leq k})\right).
$$

According to the big denominators property, we have
\begin{equation}\label{BD-action}
\frac{d^{m_j} \left(z^{q}\widehat F_j^{>m_j+k}\right)}{dz^{m_j}}\prec C\widehat{\mathcal T(F)-\cS(F^{\leq k})},
\end{equation}
where
$$
\widehat{\mathcal T(F)}=\sum_{i\geq 0}\|\left(\mathcal T(F)\right)^{(i)}\|z^i.
$$
We recall that $\left(\mathcal T(F)\right)^{(i)}$ denotes the homogeneous (vector) component of degree $i$ in the Taylor expansion at $0$ of $\mathcal T(F)$. Its the norm is the maximum of the norms of its coordinates component (see definition \ref{def-norms}).
According to $(\ref{majorant})$, we have
\begin{eqnarray*}
\overline{W_i\left(x,\partial F\right)-\cS_i(F^{\leq k})} & \prec & \frac{M}{1-c(x_1+\cdots x_n)}\left[ (x_1+\cdots x_n)^{k+q+1}+\right. \\
&&\left.\sum_{j=1}^r\sum_{|\alpha|\leq m_j}(x_1+\cdots x_n)^{p_{j,|\alpha|}}\frac{\partial^{|\alpha|} \bar F_j^{>k}}{\partial x^{\alpha}}\right]\nonumber\\
& &+\frac{M\left( \sum_{j,j'=1}^r\sum_{|\alpha|,|\alpha'|\leq m_j}\frac{\partial^{|\alpha|} \bar F_j^{>k}}{\partial x^{\alpha}}\frac{\partial^{|\alpha'|} \bar F_{j'}^{>k}}{\partial x^{\alpha'}}\right)}{1-c\left(x_1+\ldots +x_n+\sum_{p=1}^r\sum_{k=0}^{m_r}\sum_{|\alpha|=k}u_{p,\alpha}\right)}.
\end{eqnarray*}


Let us set, for $\leq j\leq r$, $k+1\leq K$~:
$$
\widehat F_j^{m_j+k+1,m_j+K}(z) := \sum_{i=m_j+k+1}^{m_j+K}\|F_j^{(i)}\|z^i,\quad f^{\leq K}(z) := \sum_{i=1}^{K}f_iz^i
$$
Furthermore, we shall set, for convenience 
$$
\widehat F^{k+1, K}:=(\widehat F_1^{m_1+k+1,m_1+K},\ldots, \widehat F_r^{m_r+k+1,m_r+K} ).
$$
Let us assume that, by induction on $m\geq 1$, that we have 
$$
\widehat F_j^{m_j+k+1,m_j+k+m}\prec z^{m_j+k}f^{\leq m}.
$$
Let us prove that $\widehat F_j^{m_j+k+1,m_j+k+m+1}\prec z^{m_j+k}f^{\leq m+1}$.
Hence, we have for all $1\leq j\leq r$ and nonnegative integer $p\leq m_j$, 
$$
\frac{d^p \widehat F_j^{m_j+k+1,m_j+k+m}}{dz^p}(z)\prec \frac{d^p z^{m_j+k}f^{\leq m}}{dz^p}(z).
$$ 
According to lemma \ref{lem-deriv}, we have
$$
\sum_{|\alpha|=p}\frac{\partial^{|\alpha|} \bar F_{j }^{m_j+k+1,m_j+k+m}}{\partial x^{\alpha}}(z,\ldots,z)\prec c_p\frac{d^p \widehat F_j^{m_j+k+1,m_j+k+m}}{dz^p}(z)\prec c_p \frac{d^p z^{m_j+k}f^{\leq m}}{dz^p}(z).
$$
According to the definition $(\ref{def-G})$ of $G$ and estimate $(\ref{majorant})$, we obtain, for $1\leq i\leq r$~:
\begin{equation}
\overline{W_i\left(x,\partial F^{k+1,k+m}\right)-\cS_i(F^{\leq k})}(z,\ldots,z)\prec G\left(z,\frac{d^lz^{m_j+k}f_{j}^{\leq m}}{dz^l}(z),\;1\leq j\leq r,\; 0\leq l\leq m_j\right)
\end{equation}
On the other hand, according to $(\ref{BD-action})$ and taking the (k+m+q+1)-jet at the origin, we obtain
$$
\frac{d^{m_j} \left(z^{q}\widehat F_j^{m_j+k+1, m_j+k+m+1}\right)}{dz^{m_j}}=\left(\frac{d^{m_j} \left(z^{q}\widehat F_j^{>m_j+k}\right)}{dz^{m_j}}\right)^{\leq k+m+q+1}\prec C\left(\widehat{\mathcal T(F)-\cS(F^{\leq k})}\right)^{\leq k+m+q+1}.
$$
Since both $\mathcal T$ and $G$ strictly increase the order by $q$, then we have
$$
\left(\mathcal T(F)\right)^{\leq k+m+q+1}=\left(\mathcal T(F^{\leq k+m})\right)^{\leq k+m+q+1}.
$$
Indeed, the order at the origin of $\cT(F)-cT(F^{\leq k+m})$ is strictly greater than $k+m+1+q$.
Furthermore, the $(k+q+m+1)$-jet of $G\left(z,\frac{d^lz^{m_j+k}f_{j}}{dz^l}(z),\;1\leq j\leq r,\; 0\leq l\leq m_j\right)$ at the origin is nothing but $(k+q+m+1)$-jet of $G\left(z,\frac{d^lz^{m_j+k}f_{j}^{\leq m}}{dz^l}(z),\;1\leq j\leq r,\; 0\leq l\leq m_j\right)$ at the origin.

Combining all these last estimates, we obtain
\begin{eqnarray*}
\frac{d^{m_j} \left(z^{q}\widehat F_j^{m_j+k+1, m_j+k+m+1}\right)}{dz^{m_j}}&\prec &C\left(\widehat{\mathcal T(F)-\cS(F^{\leq k})}\right)^{\leq k+m+q+1}\\
&= & C\left(\widehat{T(F^{\leq k+m})-\cS(F^{\leq k})}\right)^{\leq k+m+q+1}\\
& \prec & \left(G\left(z,\frac{d^lz^{m_j+k}f_{j}^{\leq m}}{dz^l}(z),\;1\leq j\leq r,\; 0\leq l\leq m_j\right)\right)^{\leq k+m+q+1}\\
&= &  \left(G\left(z,\frac{d^lz^{m_j+k}f_{j}}{dz^l}(z),\;1\leq j\leq r,\; 0\leq l\leq m_j\right)\right)^{\leq k+m+q+1}\\
&= & \frac{d^{m_i} \left(z^{q+m_i}f^{\leq k+m+1}\right)}{dz^{m_i}}
\end{eqnarray*}
Hence, we have shown that
$$
\frac{d^{m_j} \left(z^{q}\widehat F_j^{m_j+k+1, m_j+k+m+1}\right)}{dz^{m_j}}\prec \frac{d^{m_i} \left(z^{q+m_i}f^{\leq k+m+1}\right)}{dz^{m_i}}.
$$ 
Comparing the coefficient of $z^{k+m+q+1}$, we obtain
$$
\|F_j^{(m_j+k+m+1)}\|\leq f_{k+m+1},
$$
and we are done.

\subsubsection{Existence of an analytic solution of the the majorizing system}\label{sec-sol-maj}

In this section, we shall prove that system of differential equation $(\ref{diff-equ})$ has an analytic solution vanishing a the origin. We shall show that this system has a kind of {\bf regular singularity} at the origin. In fact, if there were only one big denominator, that is $r=1$, then the system reduces indeed to a differential equation with regular singularity at the origin. To prove the existence of an analytic solution, we shall apply a modification of the argument used by B. Malgrange in order to prove the existence of analytic solution of a regular differential equation \cite{malgrange-maillet}. The way to proceed is to rescale the equation by a scalar $\la$ and then to prove that solving the system of differential equation corresponds to find a solution of an implicit function theorem in a suitable Banach space of analytic functions.

Let us rescale these equations by the mean of the map $t\mapsto \la t$. Let us denote $g_{\lambda}(t):=g(\lambda t)$. Then we have $\la(\frac{d g}{dz})_{\la}=\frac{d g_{\la}}{dz}$. Hence, we have 
\begin{equation}\label{equ-lambda}
({\mathcal L}_i(f))_{\la}=\la^{q+k}\frac{d^{m_i}z^{m_i+q+k}(f_i)_{\la}}{dz^{m_i}} (z) - CG\left(\la z,\la^{m_j+k-l}\frac{d^l z^{m_j+k}(f_{j})_{\la}}{dz^l},\;1\leq j\leq r,\; 0\leq l\leq m_j\right)
\end{equation}
We have, with a short notation,
\begin{eqnarray*}
G\left(\la z,\la^{m_j+k-l}\frac{d^l z^{m_j+k}(f_{j})_{\la}}{dz^l}\right) &= & \frac{M}{1-cn\la z}\left[\la^{k+q+1}(nz)^{k+q+1}\right.\\
&&+\left.\sum_{j=1}^r\sum_{l=0}^{m_p}\la^{m_j+p_{j,l}+k-l}(nz)^{p_{j,l}}c_l \frac{d^l z^{m_j+k}(f_{j})_{\la}}{dz^l} \right]\\
&& + A_2(\la)
\end{eqnarray*}
where we have written~:
$$
A_2(\la):=\frac{M\left( \sum_{j,j'=1}^r\sum_{\scriptsize 1\leq l\leq m_j\atop 1\leq l'\leq m_{j'}}c_lc_{l'}\la^{m_{j'}+m_j+2k-l-l'}\frac{d^l z^{m_j+k}(f_{j})_{\la}}{dz^l}\frac{d^{l'} z^{m_{j'}+k}(f_{j'})_{\la}}{dz^{l'}}\right)}{1-c\left(n\la z+\sum_{j=1}^r\sum_{l=0}^{m_r}c_l \la^{m_j+k-l}\frac{d^l z^{m_j+k}(f_{j})_{\la}}{dz^l}\right)}.
$$
According definition $(\ref{def-p})$ of $p_{j,l}$, we have $m_j+p_{j,l}+k-l-(q+k)\geq 1$. Furthermore, assume $k$ is large enough, that is such that
\begin{eqnarray}
m_{j'}+m_j+2k-l-l'-(q+k)&\geq & 1\label{ordre-prod}\\
m_j+k-l&\geq& 0.\nonumber
\end{eqnarray}
It is sufficient that
\begin{equation}
k\geq q+1.\label{cond-k}
\end{equation}
 As a consequence, dividing ${\cL_i(f)}_{\la}$ by $\la^{q+k}$, we obtain the following system of differential operators~:
$$
L_i(f_{\la}):=\frac{d^{m_i}z^{m_i+q+k}(f_i)_{\la}}{dz^{m_i}} -\la \tilde G\left(\la, z,\frac{d^l z^{m_j+k}(f_{j})_{\la}}{dz^l},\;1\leq j\leq r,\; 0\leq l\leq m_j\right)
$$
for some analytic $\tilde G$, in all its variables, at the origin.

Let us consider, for $m\geq 0$, the space $H^m$ of power series $f=\sum_{k\geq 1}a_lz^l$ such that
$$
\|f\|_m:=\sum_{l\geq 1}|a_l|l^m<+\infty.
$$
There exists a constant $C$, such that for any $f\in H^{m_i}$, then
$$
\left\|\frac{d^{m_i}z^{m_i+q+k}f}{dz^{m_i}}\right\|_0=\sum_{l\geq 1}|f_l|(m_i+l+q+k)\cdots (1+l+q+k)\leq C\|f\|_{m_i}.
$$ 
This means that $\frac{d^{m_i}z^{m_i+q+k}f}{dz^{m_i}}\in H^0$. Furthermore, since $H^m\hookrightarrow H^{m'}$ for $m'\leq m$, then $\frac{d^{l}z^{m_i+k}f}{dz^{l}}\in H^0$. On the other hand, according to lemma \ref{lem-gevrey}, $H^0$ is a Banach algebra since $\|fg\|_0\leq \|f\|_0\|g\|_0$. As a consequence, we can consider the analytic mapping 
\begin{eqnarray*}
L(\la, f_1,\ldots,f_r)& :& (\Bbb R,0)\times H^{m_1}\times\cdots\times H^{m_r}\rightarrow  (H^0)^r\\
(\la, f_1,\ldots,f_r)&\mapsto& \left(\frac{d^{m_i}z^{m_i+q+k}f_i}{dz^{m_i}} -\la \tilde G\left(\la, z,\frac{d^l z^{m_j+k}f_{j}}{dz^l},\;1\leq j\leq r,\; 0\leq l\leq m_j\right)\right)_{i=1,\ldots, r}.
\end{eqnarray*}
We have $L(0)=0$. Moreover, the differential of $L$ with respect to $(f_1,\ldots,f_l)$, at the origin, in the direction $v=(v_1,\ldots, v_r)$ is 
$$
D_fL(0)v= \left(\frac{d^{m_i}z^{m_i+q+k}v_i}{dz^{m_i}} \right)_{i=1,\ldots, r}.
$$
It is invertible from $H^{m_1}\times\cdots\times H^{m_r}$ on the subspace of $(H^0)^r$ of elements of order $\geq k+q+1$. According to the analytic implicit function theorem, there exists a map 
$$
\la\in (\Bbb R,0)\mapsto (f_{1}(\la),\ldots,f_r(\la))\in H^{m_1}\times\cdots\times H^{m_r}
$$ 
such that $(f_{1}(0),\ldots,f_r(0))=0$ and $L(\la, f_1(\la),\ldots,f_r(\la))=0$ for $\la$ small enough.

\subsection{Proof of the variation of the main theorem}
Let us sketch briefly how to prove theorem \ref{var-main-thm}. The construction of the formal solution $F=(F_1,\ldots, F_r)$ is the same as above up to shift of the order by $d$. We look for functions $f_j$ vanishing at the origin such that $\widehat F_j\prec z^{m_j+d}f_j$. They will solves the majorizing system
\begin{equation}\label{d-equ-f}
{\mathcal L}_i(f):=\frac{d^{m_i}z^{q+m_i+k}f_{i}}{dz^{m_i}} (z) - CG\left(z,\frac{d^lz^{m_j+d+k}f_{j}}{dz^l}(z),\;1\leq j\leq r,\; 0\leq l\leq m_j\right). 
\end{equation}
Now, equation $\ref{equ-lambda}$ becomes
\begin{equation}\label{d-equ-lambda}
({\mathcal L}_i(f))_{\la}=\la^{q+k}\frac{d^{m_i}z^{m_i+q+k}(f_i)_{\la}}{dz^{m_i}} (z) - CG\left(\la z,\la^{m_j+k+d-l}\frac{d^l z^{m_j+k}(f_{j})_{\la}}{dz^l},\;1\leq j\leq r,\; 0\leq l\leq m_j\right)
\end{equation}
Since $\cT$ is $d$-regular, then we have $m_j+p_{j,l}+k+d-l-(q+k)\geq 1$. Therefore, $G\left(\la z,\la^{m_j+k+d-l}\frac{d^l z^{m_j+k}(f_{j})_{\la}}{dz^l},\;1\leq j\leq r,\; 0\leq l\leq m_j\right)$ is divisible by $\la^{q+k+1}$. We conclude as above.

\section{Relative big denominators and Gevrey classes}\label{sec-Gevrey}

In this section, we investigate what happens when the denominators do not growth fast enough to overcome the divergence generated by the differentials of the unknowns involved. We use the same notation as in section \ref{sec-PDEs}~: we are given ${\bf m}=(m,\ldots,m)\in \Bbb N^r$ and we consider $\mathcal S,\mathcal T,\pi$ be maps as in theorem \ref{main-thm}. The equation to be solved is still $(\ref{eq-main})$. We will assume that all item of the big denominators property holds but $(\ref{BD})$ which is replaced by the weaker condition~: for each $1\leq j\leq r$, there exists $0<\alpha$ such that 
\begin{equation}\label{notsobig}
z^q\widehat{\mathcal S^{-1}\pi G}(z) \prec \ z^{m}C\sum_{i\geq q+1} \frac{\|G^{(i)}\|}{i^{m-\alpha}}z^i.
\end{equation}
for some constant $C$ independent of $G$. In that case, we will say that the triple $(\mathcal S, \mathcal T,\pi )$  has the {\it relative big denominator property of order $m- \alpha<m$}. In that case, estimate $(\ref{BD-expand})$ is replaced by
\begin{equation}\label{notsoBD-expand}
\left\|\left(\cS_j^{-1}\circ \pi (G)\right)^{(i+m)}\right\|\leq C\frac{\|G^{(i+q)}\|}{i^{m-\alpha}}.
\end{equation}

In that situation, we cannot expect convergence of the solution. We will show that the problem has a power series solution that diverges in a controlled way.
\begin{definition}Let $\alpha\geq 0$. We say that a formal power series $f=\sum_{Q\in\Bbb N^n}f_Qx^Q$ is $\alpha$-{\it Gevrey} if there exists constants $M,C$ such that for all $Q\in\Bbb N^n$, $|f_Q|\leq MC^{Q}(|Q|!)^{\alpha}$.
\end{definition}
\begin{thm}
\label{thm-Gevrey}
Let $(\mathcal S,\mathcal T,\pi)$ be maps as in theorem \ref{main-thm}. Assume that the triple $(\mathcal S, \mathcal T,\pi )$ has the {\it relative big denominator property of order $m-\alpha< m$}. 
Then, equation (\ref{eq-main}) has an $\alpha$-Gevrey formal solution $F$.
\end{thm}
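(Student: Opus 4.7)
The plan is to follow the three-step architecture of the proof of Theorem~\ref{main-thm} --- formal solution, analytic majorant, solvability of the majorant system --- and at each stage track how the replacement of $i^{m}$ by $i^{m-\alpha}$ in the denominators of $\mathcal{S}^{-1}\pi$ leaks a controlled factorial growth into the coefficients. The substantive deviation is in the last step: the majorant ODE loses its regular-singular structure and acquires instead an irregular singularity of Gevrey class $\alpha$, so in place of an analytic majorant one should expect an $\alpha$-Gevrey one.

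Step~1 (formal solution) is unchanged: the inductive construction of Section~\ref{proof-main} uses only that $\mathcal{S}^{-1}\circ\pi$ shifts order by $-q$ and that $\mathcal{T}$ strictly increases order by $q$, both of which still hold, so the recursion $F^{(k)} = (\mathcal{S}^{-1}\pi\,\mathcal{T}(S_{k-1}))^{(k)}$ produces a unique formal solution $F$. Step~2 (majorant) is also unchanged in structure: the Taylor expansion of $\mathcal{T}(F)$ around the truncation $F^{\leq k}$ together with the regularity of $\mathcal{T}$ produces the analytic majorant $G$ of (\ref{def-G}) verbatim. What changes is the passage from the raw estimate (\ref{BD-action}) to the majorant ODE (\ref{equ-f}): each application of $\mathcal{S}^{-1}\pi$ now loses a factor $i^\alpha$ by (\ref{notsoBD-expand}), and iterated $k$ times this accumulates to $(k!)^\alpha$. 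To capture this I would pass to the formal $\alpha$-Borel transform
\[
\widehat{F}_j^{[\alpha]}(z) := \sum_{i}\frac{\|F_j^{(i)}\|}{(i!)^\alpha}\,z^i,
\]
rewrite the majorization (\ref{BD-action}) as an inequality for $\widehat F^{[\alpha]}$, and look for a majorant of the form $\widehat F_j^{[\alpha]}\prec z^{m+k} f_j$ with $f_j$ analytic; the resulting ODE has the same nonlinear shape as (\ref{equ-f}), with coefficients twisted by Gevrey weights.

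Step~3 is where the Gevrey scale is forced. The rescaling $z\mapsto \lambda z$ of Section~\ref{sec-sol-maj} no longer factors $\lambda^{q+k+1}$ cleanly out of the right-hand side: the power-counting identities (\ref{ordre-prod})--(\ref{cond-k}) fall $\alpha$ powers short, and the rescaled right-hand side is only $\alpha$-Gevrey in $\lambda$. I would accommodate this either by invoking Malgrange's Gevrey version of Maillet's theorem \cite{malgrange-maillet} directly for the majorant system --- whose irregularity at $z=0$ is precisely of order $\alpha$, matching the loss in (\ref{notsoBD-expand}) --- or, equivalently, by replacing the Banach spaces $H^m$ of Section~\ref{sec-sol-maj} by Gevrey-weighted analogues
\[
H^m_\alpha := \Bigl\{\,\textstyle\sum_{l\geq 1} a_l z^l \,:\, \|f\|_{m,\alpha} := \sum_l |a_l|\, l^m/(l!)^\alpha <\infty\,\Bigr\},
\]
checking that they remain Banach algebras under convolution and that $z^{m+q+k}d^m/dz^m$ remains continuous with continuous right inverse on the high-order subspace, and then running the implicit function theorem as before.

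The main obstacle is the compatibility check between the Gevrey weights and the derivative/product estimates of Lemma~\ref{lem-deriv}: one has to verify that the deficit $\alpha$ in (\ref{notsoBD-expand}) is exactly absorbed by the weights $(l!)^{-\alpha}$ and that no further Gevrey loss is incurred by the nonlinearity after rescaling, so that the rescaled system becomes analytic (in $\lambda$) as an equation in $H^{m_1}_\alpha\times\cdots\times H^{m_r}_\alpha$. Once this bookkeeping is in place, the resulting $\alpha$-Gevrey majorant $f$ dominates $\widehat F^{[\alpha]}$ in the Borel sense, which is equivalent to the $\alpha$-Gevrey bound $\|F_j^{(i)}\|\leq M C^i (i!)^\alpha$ claimed by the theorem.
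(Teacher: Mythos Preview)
Your overall architecture is right, and your second alternative in Step~3 --- replacing the spaces $H^m$ by Gevrey-weighted spaces and running the implicit function theorem there --- is precisely what the paper does: it works in $H^{s,\beta}:=\{f=\sum f_it^i:\sum_i|f_i|\,i^\beta/(i!)^s<\infty\}$ with $s=\alpha$, following Malgrange \cite{malgrange-maillet} as you anticipate.

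Your first alternative, however --- passing to the $\alpha$-Borel transform $\widehat F^{[\alpha]}$ and seeking an \emph{analytic} majorant for it --- is not the paper's route, and your claim that the transformed majorant system ``has the same nonlinear shape as (\ref{equ-f}), with coefficients twisted by Gevrey weights'' glosses over a genuine obstacle: the majorant $G$ of (\ref{def-G}) contains products of the $d^lz^{m+k}f_j/dz^l$, and the Borel transform sends products to convolutions, so the transformed nonlinearity is no longer pointwise and the ODE shape is destroyed. Moreover, dividing (\ref{notsoBD-expand}) by $((i+m)!)^\alpha$ does not cleanly yield an order-$m$ big-denominator estimate for $\widehat F^{[\alpha]}$ in terms of $\widehat{\mathcal T(F)}^{[\alpha]}$, so the promised reduction to (\ref{equ-f}) does not go through as stated. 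The paper avoids all of this by leaving $\widehat F$ untransformed and modifying only the \emph{linear} left-hand side of the majorant system: in place of $d^{m}z^{q+m+k}(\cdot)/dz^{m}$ it uses the weaker operator $L(g)(z):=\sum_i g_i\,i^{m-\alpha}z^i$, which is exactly what (\ref{notsoBD-expand}) gives, while the right-hand side $Cz^{m}G\bigl(z,\,d^lz^{m+k}f_j/dz^l,\ldots\bigr)$ keeps the full derivatives unchanged. The resulting order-$\alpha$ mismatch between the two sides is the irregularity, and the paper shows the system has a solution $f$ in $H^{\alpha,0}$ (resp.\ $H^{\alpha,m-\alpha}$), which immediately yields the $\alpha$-Gevrey bound on $F$. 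The technical backbone is a lemma on the $H^{s,\beta}$ spaces (algebra structure, behaviour under $\delta:=t\partial_t$ and multiplication by $t$), together with a case split $\alpha\le m$ versus $\alpha>m$ governing the choice of the auxiliary index $\beta$.
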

\begin{remark}
For the problem of conjugacy of germs of vector fields vanishing at a point (say $0$), A.D. Brjuno proved that, provided that the (assumed to be) diagonal linear vector field $S$ satisfies a small divisors condition, and if an nonlinear analytic perturbation $R$ satisfies Brjuno's condition A, then $X:=S+R$ is analytically conjugate to a normal form. If condition A is not satisfied then analytic counter examples were constructed and one could see on these counter examples, that divergence of the transformation is of Gevrey type. Theorem \ref{thm-Gevrey} shows that, in a much more general context, the expected divergence is always Gevrey.

This Gevrey character of the formal transformation leads, in the case of vector fields, to the approximation of the dynamics by an analytic normal form up to an exponentially small remainder \cite{stolo-lombardi}[theorem 6.11] (see also \cite{ramis-schafke} for averaging).
\end{remark}
\begin{proof}
Our proof is very inspired by Malgrange's version of Maillet theorem \cite{malgrange-maillet} which proves the Gevrey character of formal solutions of holomorphic nonlinear differential equations of one variable with irregular singularity. Let us define the Banach space
$$
H^{s,\beta}:=\left\{f=\sum_{i\geq 0}f_it^i\in\Bbb C[[t]]\;|\; \|f\|_{s,\beta}:=\sum_{i\geq 0}\frac{|f_i|i^\beta}{(i!)^{s}}<+\infty\right\}.
$$
Let us set
$$
\delta := t\frac{\partial}{\partial t}.
$$
Let us start with an elementary lemma~:
\begin{lem}\label{lem-gevrey}
\begin{enumerate}
\item $\partial_t^k(t^n\psi)=t^{n-k}(\delta+n)\cdots(\delta +n-k+1)\psi$
\item $t\psi\in H^{s,\beta} \Leftrightarrow \psi\in H^{s,\beta-s}$ if $\psi(0)=0$.
\item if $\be\geq s$, then $\|\psi\|_{s, \be-s}\leq \|t\psi\|_{s, \be}$.
\item Let $g\in H^{s,\beta}$ then $t^q\delta^pg\in H^{s,0}$ whenever $q\geq \max(0, \frac{1}{s}(p-\beta))$.
\item Let $g\in H^{s,0}$  and $m\leq s$. Then $t^q\delta^pg\in H^{s,s-m}$ whenever $p\leq (q-1)s+m$.
\item $\|fg\|_{s,0}\leq \|f\|_{s,0}\|g\|_{s,0}$.
\item Let $a>0$ and $f,g\in H^{s,a}$. If $f(0)=g(0)=0$ then $fg\in H^{s,a}$.
\end{enumerate}
\end{lem}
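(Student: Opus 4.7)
The plan is to handle these seven items essentially in order, since each is a concrete elementary estimate on the Banach spaces $H^{s,\beta}$. Most of the work amounts to computing the effect of the operators $t$, $\delta$, and multiplication on the defining series, and then comparing the weights $i^\beta/(i!)^s$. I will write $\psi = \sum_{i\ge 0}\psi_i t^i$ throughout and repeatedly use the key facts $\delta(t^i) = i\, t^i$, so $\delta^p g = \sum i^p g_i t^i$, together with $(i+q)!/i! = (i+1)\cdots(i+q) \asymp i^q$ for $i$ large.

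For item 1, I would induct on $k$: the base case $\partial_t(t^n\psi) = t^{n-1}(n+\delta)\psi$ is immediate, and the step uses the identity $\partial_t \circ t^m = t^{m-1}(m+\delta)$. For items 2 and 3, the point is the explicit formula $\|t\psi\|_{s,\beta} = \sum_{i\ge 0}\frac{|\psi_i|(i+1)^{\beta-s}}{(i!)^s}$ (under $\psi(0)=0$): when $\beta\ge s$, we have $(i+1)^{\beta-s} \ge i^{\beta-s}$, giving item 3, and the reverse equivalence of norms follows from $(i+1)^{\beta-s} \le 2^{|\beta-s|} i^{\beta-s}$ for $i\ge 1$, giving item 2. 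For items 4 and 5, I would expand $\|t^q\delta^p g\|_{s,\gamma}$ directly: in both cases the ratio to be bounded is
\[
\frac{i^p (i+q)^{\gamma}}{((i+q)!/i!)^s}\cdot\frac{1}{i^\beta},
\]
and using $(i+q)!/i!\ge \prod_{\ell=1}^{q}(i+\ell)\ge i^q$ (or $\ge c\, i^q$ uniformly) reduces the estimate to the algebraic inequalities $p-\beta\le qs$ for item 4, and $p+(s-m)\le qs$, i.e.\ $p\le (q-1)s+m$, for item 5.

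For item 6, I expand the Cauchy product $fg = \sum_i (\sum_{j+k=i} f_j g_k) t^i$ and exploit the elementary binomial inequality $(j!)(k!) \le (j+k)!$, which gives $1/(i!)^s \le 1/((j!)^s(k!)^s)$ and, after reindexing, the desired $\|fg\|_{s,0} \le \|f\|_{s,0}\|g\|_{s,0}$. Item 7 is the only place where some small trick is needed, and I expect it to be the mildest obstacle: we must carry the weight $i^a$ across the product. Here I would use that for $j,k\ge 1$ we have $j+k \le 2jk$, hence $(j+k)^a \le 2^a j^a k^a$, so that
\[
\|fg\|_{s,a} \le 2^a \sum_{i\ge 2}\sum_{\substack{j+k=i\\ j,k\ge 1}}\frac{j^a|f_j|}{(j!)^s}\cdot\frac{k^a|g_k|}{(k!)^s} \le 2^a \|f\|_{s,a}\|g\|_{s,a},
\]
using $(j!)(k!)\le i!$ again. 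The vanishing at $0$ hypothesis is essential precisely to guarantee $j,k\ge 1$ so that $j+k\le 2jk$; without it the estimate fails, which explains why item 6 is stated only for $a=0$ in general. Once these building blocks are in place, the rest of the Gevrey estimates in the proof of Theorem \ref{thm-Gevrey} reduce to formal manipulations.
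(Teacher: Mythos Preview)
Your proposal is correct and follows essentially the same approach as the paper: induction for item 1, the explicit rewriting $\|t\psi\|_{s,\beta}=\sum_{i\ge 0}|\psi_i|(i+1)^{\beta-s}/(i!)^s$ for items 2--3, the direct expansion of $\|t^q\delta^p g\|$ together with $(i+1)\cdots(i+q)\ge i^q$ for items 4--5, the binomial inequality $j!\,k!\le (j+k)!$ for item 6, and the key trick $j+k\le 2jk$ for $j,k\ge 1$ in item 7. The only cosmetic difference is that for item 5 the paper splits off one factor $(i+q)^s$ to cancel $(i+q)^{s-m}$ exactly, yielding the bound with constant $1$, whereas your route via $(i+q)^{s-m}\le (1+q)^{s-m}i^{s-m}$ picks up a harmless $q$-dependent constant; the membership conclusion is unaffected.
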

\begin{proof}
\begin{enumerate}
\item
The proof is obtained by induction on $k$ since
$$
\partial_t(t^n\psi)=nt^{n-1}\psi+t^n\partial_t(\psi)= t^{n-1}(\de+n)\psi.
$$
\item Let us first assume that $\be\geq s$. Let $t\psi\in H^{s,\beta}$, then
\begin{equation}\label{inequality-gevrey}
\|\psi\|_{s,\beta-s}\leq\sum_{i\geq 1}\frac{|\psi_i|(i+1)^{\be-s}}{i!^s}=\sum_{i\geq 1}\frac{|\psi_i|(i+1)^{\be}}{(i+1)!^s}=\|t\psi\|_{s,\beta}<+\infty.
\end{equation}
Moreover, for $i\geq 1$,
$$
(1/2)^{\be-s}(i+1)^{\be-s}\leq i^{\be-s}\leq (i+1)^{\be-s}.
$$
Hence,
$$
(1/2)^{\be-s}\|x\psi\|_{s,\be}\leq  \|\psi\|_{s,\be-s}\leq \|x\psi\|_{s,\be}.
$$

In the case, $\be-s<0$, we have
$$
\|t\psi\|_{s,\beta}=\sum_{i\geq 1}\frac{|\psi_i|(i+1)^{\be}}{(i+1)!^s}\leq \sum_{i\geq 1}\frac{|\psi_i|i^{\be-s}}{i!^s}= \|\psi\|_{s,\beta-s}.
$$
On the other hand, if $i\geq 1$ then $1/i\leq 2/(i+1)$. Hence,
$$
\|\psi\|_{s,\be -s}=\sum_{i\geq 1}\frac{|\psi_i|i^{\be-s}}{i!^s}\leq 2^{s-\be}\sum_{i\geq 1}\frac{|\psi_i|(i+1)^{\be}}{(i+1)!^s}=2^{s-\be}\|t\psi\|_{s,\beta}.
$$
\item Inequality $(\ref{inequality-gevrey})$ holds if $\psi(0)\neq 0$ and $\be-s\geq 0$.
\item We have $t^q\de^p(\sum_{i\geq 1}g_it^i)= \sum_{i\geq 1}g_ii^pt^{i+q}$.So,
$$
\|t^q\de^pg\|_{s,0}= \sum_{i\geq 1}\frac{|g_i|i^p}{(i+q)!^{s}}.
$$
Since $\frac{1}{[(i+1)\cdots (i+q)]^s}\leq \frac{1}{i^{qs}}$ then we have $\frac{i^p}{(i+q)!^s}\leq \frac{i^{\be}}{(i)!^s}$ if $q\geq \max(0, \frac{1}{s}(p-\beta))$. Thus, under that condition,
we have $\|t^q\de^pg\|_{s,0}\leq \|g\|_{s,\be}$.
\item Indeed, we have
$$
\|t^q\de^pg\|_{s,s-m}= \sum_{i\geq 1}\frac{|g_i|i^p(i+q)^{s-m}}{(i+q)!^{s}}.
$$
Since
$$
\frac{i^p(i+q)^{s-m}}{(i+1)\cdots(i+q)^{s}}\leq \frac{i^p}{(i+q)^{m}(i+1)\cdots(i+q-1)^{s}}\leq \frac{i^p}{i^{m+(q-1)s}}\leq 1
$$
then we have $\|t^q\de^pg\|_{s,s-m}\leq \|g\|_{s,0}$.
\item We have
\begin{eqnarray*}
\|fg\|_{s,0}&\leq &\sum_{i\geq 0}\sum_{k=0}^{i}\frac{|f_k|}{k!^s}\frac{|g_{k-i}|}{(k-i)!^s}\frac{k!^s(k-i)!^s}{i!^s}\\
&\leq & \sum_{i\geq 0}\sum_{k=0}^{i}\frac{|f_k|}{k!^s}\frac{|g_{k-i}|}{(k-i)!^s}= \|f\|_{s,0}\|f\|_{s,0}.
\end{eqnarray*}
\item Since $i\leq 2k(i-k)$ if $1\leq k\leq i-1$ and $i\geq 2$ then
$$
\|fg\|_{s,a}\leq \sum_{i\geq 2}\sum_{k=1}^{i-1}\frac{|f_k|k^a}{k!^s}\frac{|g_{k-i}|(i-k)^a}{(k-i)!^s}\frac{k!^s(k-i)!^s}{i!^s}.\left(\frac{i}{k(i-k)}\right)^a\leq 2^a\|f\|_{s,a}\|g\|_{s,a}.
$$

\end{enumerate}
\end{proof}

The equation to be solved is equation $(\ref{eq-main})$. As above, that is as in the ``big denominators case'', this equation has a formal solution $F$. As above, we select an integer $k\geq 1$ and we look for a majorant of the solution $F^{>k}=( F_1^{>m_1+k},\ldots, F_r^{>m_r+k})$ of
\begin{equation}\label{equ-k}
\cS(F^{>k})=\pi\left(\cT(F)-\cS(F^{\leq k})\right).
\end{equation}
Let us define the following operator $L$ that maps a formal power series of one variable to~: if $f=\sum_{i\geq 1}f_it^i\in\Bbb C[[t]]$, then
\begin{equation}\label{op-L}
L(f)(z):=\sum_{i\geq 1}f_iz^{i}i^{m-\alpha}.
\end{equation}
According to $(\ref{notsoBD-expand})$, we have
\begin{eqnarray*}
L(z^q\widehat{\cS^{-1}_j\pi(G)})&=&\sum_{i\geq 1}\|(\cS_j^{-1}\pi(G)^{(i+m)}\|z^{i+q+m}(q+i+m)^{m-\alpha}\\
&\prec& C\sum_{i\geq 1}\|G^{(i+q)}\|z^{i+q+m}=z^{m}\widehat G
\end{eqnarray*}
Hence, equation $(\ref{equ-k})$ leads to
\begin{equation}\label{notsoBD-action}
L(z^q\widehat{F_j^{>m_j+k}})\prec Cz^{m}\left(\widehat{\cT(F)-\cS(F^{\leq k})}\right).
\end{equation}
Let $G$ be the function defined by $(\ref{def-G})$. 
As above, we show by induction on the truncation order, that there exists a formal power series $f_j$, vanishing at the origin, such that 
$$
\widehat F_j^{>m+k}\prec z^{m+k}f_j 
$$
and such that the $f=(f_1,\ldots, f_r)$s solves the system
\begin{equation}\label{equ-f-gevrey}
{\mathcal L}_i(f):=L(z^{q+m+k}f_{i}) (z) - Cz^{m}CG\left(z,\frac{d^lz^{m+k}f_{j}}{dz^l}(z),\;1\leq j\leq r,\; 0\leq l\leq m\right)=0. 
\end{equation}
The only change in the proof is that we are using $(\ref{notsoBD-action})$ instead of $(\ref{BD-action})$.

Let us show that such a solution $f_j$ is a ${\alpha}$-Gevrey.

Let us first consider the case $\al\leq m$. Let us show that equation $(\ref{equ-f-gevrey})$ has a unique solution $f_j\in H^{\al,0}$. In order to prove this, we shall show that ${\bf f}=(f_1,\ldots,f_j)\in H^{\al,0}\times\cdots\times H^{\al,0}$ is solution of an analytic implicit equation in the Banach space $H^{\al,0}$.

Let $s,\be\geq (m+k+q)s$ be non-negative numbers. Let us set $g:=t^{m+k+q}f$, where $f$ stands for one the $f_i$'s. According to the third point of the previous lemma, if $g=t^{m+k+q}f\in H^{s,\be}$ then $f\in H^{s, \be-(m+k+q)s}$. Let us write $L(g)=t^{m+k+q}\psi$. We have
$$
\|L(g)\|_{s,\beta-m+\alpha}=\sum_{i\geq m+q+m+1}|g_i|\frac{i^{\beta}}{(i)!^s}=\|g\|_{s,\beta}.
$$
Hence, if $g \in H^{s, \be}$ then $L(g)=t^{q+m+k}\psi\in H^{s,\be-m+\al}$. Again, if $\be-m+\al\geq (m+k+q)s$, then $\psi\in H^{s,\be-(m+q+k)s-m+\al}$. Let us set
\begin{equation}\label{beta}
\be:=(m+k+q)s+m-\al.
\end{equation}
Therefore, if $m\geq \al$, then $\beta\geq (m+k+m)s$. In that case, if $g=t^{m+q+k}f\in H^{s,\be}$ we then have $f\in H^{s, m-\al}$ and $\psi\in H^{s,0}$. According the first point of lemma \ref{lem-gevrey}, there are universal coefficients $c_{p,j}$, $0\leq p\leq m$, $0\leq j\leq p$, such that
\begin{equation}\label{derivees}
\frac{d^p t^{m+k}f}{d t^p}=t^{m+k-p}\sum_{j=0}^pc_{p,j}\de^jf.
\end{equation}
Moreover, we have $t^a\de^jf\in H^{s,0}$ as soon as $a\geq \max(0, \frac{1}{s}(j-(m-\al))$. Let us show that if we set $s:=\al$, then $t^{m+k-p}\de^jf\in H^{s,0}$ for $0\leq j\leq p\leq m$.
Indeed, let $[\al]$ denotes the integer part of $\al$, i.e. $[\al]\leq \alpha<[\alpha]+1$. If $0\leq j\leq m-([\al]+1)$, then $j-m+\al\leq -([\al]+1)+\al<0$. Hence, $\max(0, \frac{1}{s}(j-(m-\al))=0$ so
$t^{m+k-p}\de^jf\in H^{s,0}$ for all $j\leq k\leq m$. If $m-[\al]\leq j\leq k\leq m$ then $(j-m+\al)/s\leq \al/s$. On the other hand, we have $m+k-p\geq k\geq 1$. Therefore, if $s:=\alpha$ then
$m+k-p\geq \max(0, \frac{1}{s}(j-(m-\al))$ for all $m-[\al]\leq j\leq k\leq m$.

As a consequence, we have~:
\begin{itemize}
\item if $p\leq m-[\al]-1$, then
\begin{equation}\label{cas1}
\frac{d^p (t^{m+k}f)}{dt^p}=t^{m+k-p}\underbrace{\sum_{j\leq p}c_{p,j}\de^jf}_{\in H^{\al,0}}=:t^{m+k-p}g_p;
\end{equation}
\item if $m-[\al]\leq p\leq m$, then
\begin{equation}\label{cas2}
\frac{d^p (t^{m+k}f)}{d t^p}=t^{m+k-1-p}\underbrace{\sum_{j\leq p}c_{p,j}t\de^jf}_{\in H^{\al,0}}=:t^{m+k-1-p}(tg_p).
\end{equation}
\end{itemize}

Let us consider the system of equations $(\ref{equ-f-gevrey})$~:
$$
{\mathcal L}_i(f):=L(t^{q+m+k}f_{i}) (t) - Ct^{m}CG\left(t,\frac{d^lt^{m+k}f_{j}}{dt^l}(t),\;1\leq j\leq r,\; 0\leq l\leq m\right)=0.
$$
According to definition of $p_{j,l}$ (the mapping $\cT$ is regular by assumption), we have $p_{j,l}+m+k-1-l\geq q+l$. On the other hand, if $k$ is large enough, then $m_{j'}+m_j+2k-l-l'-(q+k)\geq  2$. In that case, $G\left(t,\frac{d^lt^{m+k}f_{j}}{dt^l}(z),\;1\leq j\leq r,\; 0\leq l\leq m\right)$ is not only divisible by $t^{q+k}$ but also it can be written as
$$
G\left(t,\frac{d^lt^{m+k}f_{j}}{dt^l}(t),\;1\leq j\leq r,\; 0\leq l\leq m\right)=t^{q+k}\tilde G\left(t,tg_{j,p},\;1\leq j\leq r,\; 0\leq p\leq m\right)
$$
where the $g_{j,p}$'s are defined by $(\ref{cas1})$ and $(\ref{cas2})$ for $f=f_j$ and where $\tilde G$ is an analytic function of all its arguments.
Let us rescale this equation by the mean of the map $t\mapsto \la t$. Let us denote $g_{\lambda}(t):=g(\lambda t)$. Then we have $(\de g)_{\la}=\de (g_{\la})$. According to the definition of the $p_{j,l}$'s and the constraint on $k$ then, as in section \ref{sec-sol-maj}, we obtain after dividing by $\la^{k+m+q}$~:
\begin{equation}\label{gevrey-rescaled}
L_i(f_{\la}):=L\left(t^{m+q+k}(f_i)_{\la}\right) -\la t^{m+q+k}\tilde G\left(\la, t,t(g_{j,p})_{\la},\;1\leq j\leq r,\; 0\leq p\leq m\right)
\end{equation}
for some analytic $\tilde G$, in all its variables, at the origin.

According to $(\ref{cas1}), (\ref{cas2})$ then if $f_j\in H^{\al, m-\al}$ then $tg_{j,p}\in H^{\al, 0}$ for all $p\leq m$. Then according to the last property of lemma \ref{lem-gevrey}, 
$$
\tilde G\left(\la, t,t(g_{j,p})_{\la},\;1\leq j\leq r,\; 0\leq p\leq m\right)\in H^{\al, 0}.
$$
Let us consider the analytic mapping
$$
A(\la, f):=\frac{1}{t^{m+k+q}}L(t^{m+k+q}f_i) - \la\tilde G\left(\la, t,tg_{j,p},\;1\leq j\leq r,\; 0\leq p\leq m\right)_{i=1,\ldots, r}
$$
from $(\Bbb C,0)\times \left(H^{\al,m-\al}\right)^r$ into $(H^{\al,0})^r$. 
We have $A(0,0)=0$ and the differential of $A$ with respect tp $f$ at the point $0$,$D_f A(0,0)$, is the linear mapping $f\mapsto \frac{1}{t^{m+k+q}}(L(t^{m+k+q}f_i))_{i=1,\ldots, r}$. This map is invertible from the subspace of $(H^{\al,m-\al})^{r}$ vanishing at the origin into the subspace of $(H^{\al,0})^r$ vanishing at the origin. By the implicit function theorem, for $\la$ small enough, there exists a curve $\la\mapsto f_{\la}\in (H^{\al,0})^r$ such that $f_{0}=0$ and such that for $(\ref{gevrey-rescaled})$ holds. We are done in the case $\al\leq m$.

Let us now assume that $\al>m$. Let us set $\beta:= (m+k+q)s$ instead of $(\ref{beta})$ and let us set $s:=\al$. Then if $g=t^{m+k+q}f\in H^{\al,\beta}$ then $f\in H^{\al,0}$ and $\psi\in H^{\al,\al-m}$ where $L(g)=t^{m+q+k}\psi\in H^{\al,(m+k+q)\al-m+\al}$.
According to the fifth point of lemma \ref{lem-gevrey}, $t\delta^jf\in H^{\al,\al-m}$ whenever $j\leq m$, which is the case. According to the last point of lemma \ref{lem-gevrey}, the multiplication of $tg_p$ by any (nonnegative) power of $t$ also belong to $H^{\al,\al-m}$. Hence, if $f\in H^{\al, 0}$ then we have 
$$
\tilde G\left(\la, t,t(g_{j,p})_{\la},\;1\leq j\leq r,\; 0\leq p\leq m\right)\in H^{\al, \al-m}.
$$
As above, we consider the analytic mapping
$$
A(\la, f):=\frac{1}{t^{m+k+q}}L(t^{m+k+q}f_i) - \la\tilde G\left(\la, t,tg_{j,p},\;1\leq j\leq r,\; 0\leq p\leq m\right)_{i=1,\ldots, r}
$$
from $(\Bbb C,0)\times \left(H^{\al,0}\right)^r$ into $(H^{\al,\al-m})^r$. Its differential $D_f A(0,0)$, is the linear mapping $f\mapsto \frac{1}{t^{m+k+q}}(L(t^{m+k+q}f_i))_{i=1,\ldots, r}$. It is invertible from the subspace of $(H^{\al,0})^{r}$ vanishing at the origin into the subspace of $(H^{\al,\al-m})^r$ vanishing at the origin. We conclude as above by the analytic implicit function theorem.

\end{proof}
\section{Applications}
\label{sec-appl}
\subsection{Singular vector fields with a fixed linear approximation}
\label{vf}

We consider the classical problem of classification of germs of vector field at a singular point $0\in \mathbb R^n$, with a fixed linear part $\dot x = Ax$ at the origin, with respect to germs of diffeomorphisms fixing the origin.
Following the notations of section \ref{sec-big-denom-classif-problems}, we set
$$
r = n, \ m_i = 1,\ 1\leq i\leq n, \ s = n, \ q = 1
$$ 
so that $\mathcal F_{r,\bf m} = \left(\mathbb A_n^n\right)_{>1}$ and $P^{(1)}(x) = Ax$. 
The action of the group of local diffeomorphisms is as follows:
$$(id + F(x))_*\left(Ax + R(x)\right) = \left(I + DF(x)\right)^{-1} \left(Ax + AF(x) + R(x+ F(x))\right).$$
It is an analytic differential action of order $1$. The linear operator $\mathcal S$ is
$$\mathcal S: \ \left(\mathbb A^n_n\right)_{>1} \to \left(\mathbb A^n_n\right)_{>1}, \ \ \mathcal S(F)(x) = AF(x) - DF(x)Ax$$
which is, in fact, the Lie bracket of the vector fields $\dot x = Ax$ and $\dot x = F(x)$. It increases the order by $1$ (we have to keep in mind that the space ${\cF}_{r,\bf m}^{(i)}$ is the space of homogeneous vector fields of degree $\bf i-1$).
Let us define
\begin{eqnarray*}
\mathcal T(F)(x)&=&\left((I + DF(x))^{-1}-(I - DF(x))\right)\left(Ax + AF(x) + R(x+ F(x))\right)\\
& & +R(x+F(x))-DF(x)(AF(x)+R(x+F(x)).
\end{eqnarray*}
Let us show that it is regular. Indeed, let $F,G$ be formal vector field of order $\geq 2$. We have (with a clear abuse of notation)
\begin{eqnarray*}
\frac{\partial \cT}{\partial F}(F)G (x)&=& \left((I + DF(x))^{-1}-(I - DF(x))\right)\left(AG(x) + DR(x+ F(x))G\right)\\
& & +DR(x+F(x))G-DF(x)(AG(x)+DR(x+F(x))G)
\end{eqnarray*}
Since $R$ is of order $\geq 2$, then $DR(x+ F(x))$, $DF(x)$ and $((I + DF(x))^{-1}-(I - DF(x))$ have order $\geq 1=p_{j,0}$ for any $1\leq j\leq n$. On the other hand, we have
\begin{eqnarray*}
\frac{\partial \cT}{\partial F'}(F)DG (x)&=& \left(\sum_{k\geq 2} (-1)^kk(DF(x))^{k-1}DG(x)\right)\left(Ax + AF(x) + R(x+ F(x))\right)\\
& & -DG(x)(AF(x)+R(x+F(x))).
\end{eqnarray*}
Therefore, the coefficient in front of $DG(x)$ has order $\geq 2=p_{j,1}$, for any $1\leq j\leq n$. Therefore, the analytic differential action is regular.

If $A$ is a diagonal matrix, $A = diag(\lambda _1,..., \lambda _n)$ and $Q = (q_1,...,q_n)\in \Bbb N^n$ is a multiindex, one has
$$\mathcal S\left(x^Q\frac{\partial }{\partial x_j}\right) = (\lambda _j - (Q , \lambda ))x^Q\frac{\partial }{\partial x_j}.$$
Here we have written $(Q,\la)=\sum_{i=1}^nq_i\la_i$.
It follows that the simplest supplementary space to
the image of $\mathcal S^{(i)}$, $i\geq 2$ is the vector space $\cR^{(i)}$ of {\bf resonant vector fields}:
it is generated by
$$
x^Q\frac{\partial }{\partial x_j},\quad \lambda_j = (Q , \lambda ),\, 1\leq j\leq n,\,|Q|=i. 
$$
This supplementary space $\cR^{(i)}$ together with Proposition \ref{prop-formal-nf} give the classical formal
Poincar\'e-Dulac normal form. Let $\pi$ be the projection nonresonant terms (i.e the image of $\cS$)~: 
$$
\pi\left(\sum_{j=1}^n\sum_{Q,\in\Bbb N^n, |Q|\geq 2 }f_{Q,j}x^Q\frac{\partial }{\partial x_j}\right)= \sum_{j=1}^n\sum_{{\scriptstyle (Q,\lambda)\neq \lambda_j  \atop
\scriptstyle Q \in \Bbb N^n, |Q|\geq 2 }}f_{Q,j}x^Q\frac{\partial }{\partial x_j}.
$$
Assume that the tuple $(\lambda _1,...,\lambda _n)$ belongs to the
{\bf Poincar\'e domain}. It means that the eigenvalues $\la_i$ lie on one side of a line through the origin of the complex plane (the line is excluded). Then, it is classical that one has the following inequalities 
\begin{equation}
\label{big-denom-vf}
\vert \lambda _j - (Q , \lambda )\vert \ge C\vert Q \vert , \ \ j=1,...,n, \ \ \text{for sufficiently large} \ \ \vert Q \vert
\end{equation}
where $C>0$ is a constant which does not depend on $Q$ and $j$. In that case, the {\bf big denominators property of order $1$} holds. Indeed, we have
$$
\cS^{-1}\circ \pi\left(\sum_{j=1}^n\sum_{Q,\in\Bbb N^n, |Q|=i\geq 2 }f_{Q,j}x^Q\frac{\partial }{\partial x_j}\right)= \sum_{j=1}^n\sum_{{\scriptstyle (Q,\lambda)\neq \lambda_j  \atop
\scriptstyle Q \in \Bbb N^n, |Q|=i\geq 2 }}\frac{f_{Q,j}}{\lambda _j - (Q , \lambda )}x^Q\frac{\partial }{\partial x_j}.
$$
Hence, we have 
$$
\|\cS^{-1}\circ \pi(f^{(i)})\|\leq \frac{\|\pi(f^{(i)})\|}{Ci}\leq \frac{\|f^{(i)}\|}{Ci}
$$
Therefore Theorem \ref{main1} implies the following classical result.

\begin{thm}[Poincar\'e, see \cite{Arn2}]
\label{thm-Poincare}
If the eigenvalues of $A$ belong to the Poincar\'e domain then the Poincar\'e-Dulac normal form holds in the
analytic category.
\end{thm}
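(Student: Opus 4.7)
The plan is to recognize that almost all hypotheses of Theorem \ref{main1} have already been verified in the setup immediately preceding the theorem (filtering action, analytic differential of order $\mathbf{m}=(1,\ldots,1)$, regularity of $\mathcal{T}$), so the only things left to establish are the big denominators estimate (\ref{estimates-big-denom}) for $\mathcal{S}^{-1}\circ\pi$ and the uniform boundedness of the Poincaré--Dulac complement $\widehat{\mathcal{R}}=\bigoplus\mathcal{R}^{(i)}$. Once both are in hand, a direct appeal to Theorem \ref{main1} finishes the proof.

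First I would complexify and diagonalize $A$ (the statement is invariant under this passage, and reality can be restored by averaging at the end in the standard way). In the monomial basis $\{x^Q\partial/\partial x_j\}$ the operator $\mathcal{S}$ acts diagonally with eigenvalues $\lambda_j-(Q,\lambda)$, so $\mathcal{R}^{(i)}$ is exactly the span of basis vectors with vanishing eigenvalue and $\pi$ is the corresponding coordinate projection in the basis. In the monomial norm of Definition \ref{def-norms} a coordinate projection satisfies $\|\pi(A^{(q+i)})\|\leq\|A^{(q+i)}\|$ for every $i\geq 1$, which is exactly the uniform boundedness of the formal normal form.

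The heart of the argument is the big denominators estimate. Since $(\lambda_1,\ldots,\lambda_n)$ lies in the Poincaré domain, the classical convex-geometry lemma (the numerical range of $\lambda$ is separated from $0$ by a line) yields constants $C_0>0$ and $N\geq 2$ with
\begin{equation*}
|\lambda_j-(Q,\lambda)|\geq C_0|Q|\qquad\text{for all }j\text{ and all }Q\text{ with }|Q|\geq N.
\end{equation*}
For the finitely many nonresonant pairs $(Q,j)$ with $2\leq|Q|<N$, the divisors $\lambda_j-(Q,\lambda)$ are nonzero; enlarging the constant absorbs them, so there exists $C>0$ with $|\lambda_j-(Q,\lambda)|\geq C|Q|$ uniformly on nonresonant pairs with $|Q|\geq 2$. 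Expanding $\mathcal{S}^{-1}\circ\pi$ in the monomial basis and using $|Q|=1+i\geq i$ then gives
\begin{equation*}
\bigl\|\bigl(\mathcal{S}^{-1}\circ\pi(A^{(1+i)})\bigr)^{(1+i)}\bigr\|\leq \frac{\|A^{(1+i)}\|}{C\,i},
\end{equation*}
which is precisely (\ref{estimates-big-denom}) for $m_j=q=1$.

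The genuinely non-bookkeeping step is the convex-geometric lemma producing the linear lower bound $C|Q|$ from the Poincaré hypothesis; all the other verifications are immediate from the explicit monomial description of $\mathcal{S}$. With the big denominators estimate and uniform boundedness both established, Theorem \ref{main1} applies and produces an analytic transformation $\mathrm{id}+F\in\mathcal{G}$ bringing $Ax+R$ into $Ax+\widetilde{R}$ with $\widetilde{R}\in\widehat{\mathcal{R}}\cap(\mathbb{A}_n^n)_{>1}$ analytic, which is exactly the assertion that the Poincaré--Dulac normal form holds in the analytic category.
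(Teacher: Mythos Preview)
Your proposal is correct and follows essentially the same approach as the paper: both verify the hypotheses of Theorem \ref{main1} by using the diagonal action of $\mathcal S$ on monomials and the Poincar\'e domain lower bound $|\lambda_j-(Q,\lambda)|\geq C|Q|$ to obtain the big denominators estimate, then invoke Theorem \ref{main1} directly. Your version makes a few points explicit that the paper leaves implicit (the reduction to diagonal $A$, the absorption of the finitely many low-degree nonresonant divisors into the constant, and the uniform boundedness of the projection $\pi$), but the substance is identical.
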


Our results of section \ref{sec-Gevrey} imply a stronger theorem for the case that
instead of (\ref{big-denom-vf}) we have, for some $\alpha > 0$ the estimate
\begin{equation}
\label{Gevrey-big-denom-vf}
\vert \lambda _j - (Q, \lambda )\vert \ge C\vert Q \vert ^{1-\alpha}, \ \ j=1,...,n, \ \ \text{for sufficiently large} \ \ \vert Q \vert
\end{equation}
In this case our Theorem
\ref{thm-Gevrey} implies the following statement.

\begin{thm}
\label{thm-vf-Gevrey}
If the eigenvalues satisfy (\ref{Gevrey-big-denom-vf}) for some fixed $\alpha > 0$ (with a constant $C$
that does not depend on $Q$ and $j)$ then the Poincar\'e-Dulac normal form holds in the $\alpha$-Gevrey category~: for each analytic nonlinear perturbation of the linear vector field $S=\sum_{j=1}^n\la_ix_i\frac{\partial}{\partial x_i}$ is conjugate to a formal normal form by the mean of formal $\alpha$-Gevrey transformation.
\end{thm}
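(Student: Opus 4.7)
The plan is to apply the Gevrey version of the main theorem, Theorem \ref{thm-Gevrey}, to the setup already used in the proof of Theorem \ref{thm-Poincare}. All structural hypotheses have essentially been verified in the preceding discussion of singular vector fields, so only the relative big denominator estimate needs to be extracted from $(\ref{Gevrey-big-denom-vf})$.

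First, I would recall the data from the vector field application: we take $r=n$, $m_i=1$ for all $i$, $s=n$, $q=1$, $P^{(1)}(x)=Ax$ with $A=\mathrm{diag}(\lambda_1,\ldots,\lambda_n)$, the operator $\mathcal{S}(F)(x)=AF(x)-DF(x)Ax$, and the nonlinear piece $\mathcal{T}$ written explicitly in that subsection. It was shown there that the action is analytic differential of order $\mathbf{m}=(1,\ldots,1)$, filtering, and regular (via the order computation for $\partial\mathcal{T}/\partial F$ and $\partial\mathcal{T}/\partial F'$). The projection $\pi$ onto nonresonant monomials and the complement $\widehat{\mathcal{N}}=\bigoplus\mathcal{R}^{(i)}$ of resonant vector fields give a uniformly bounded formal normal form because $\pi$ simply deletes monomials, so $\|\pi(f^{(i)})\|\leq\|f^{(i)}\|$.

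The only item to verify is the relative big denominator property of order $m-\alpha = 1-\alpha$, i.e., estimate $(\ref{notsoBD-expand})$. Since $\mathcal{S}$ is diagonal in the monomial basis,
\begin{equation*}
\mathcal{S}^{-1}\circ\pi\Bigl(\sum_{j,Q}f_{Q,j}x^Q\tfrac{\partial}{\partial x_j}\Bigr)
=\sum_{j}\sum_{(Q,\lambda)\ne\lambda_j}\frac{f_{Q,j}}{\lambda_j-(Q,\lambda)}\,x^Q\tfrac{\partial}{\partial x_j},
\end{equation*}
and $(\ref{Gevrey-big-denom-vf})$ yields, for $|Q|=i+1$ large enough,
\begin{equation*}
\bigl\|\bigl(\mathcal{S}^{-1}\circ\pi(f)\bigr)^{(i+1)}\bigr\|
\le \frac{\|f^{(i+1)}\|}{C\,i^{1-\alpha}}.
\end{equation*}
This is exactly $(\ref{notsoBD-expand})$ with $m=1$, $q=1$, and exponent $m-\alpha=1-\alpha$. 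A finite number of low-degree resonance denominators that fail $(\ref{Gevrey-big-denom-vf})$ can be absorbed into the constant $C$, since there are only finitely many of them below any threshold $|Q|\le N_0$ fixed by the Poincaré-type lower bound.

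Having checked all hypotheses, Theorem \ref{thm-Gevrey} applies directly and produces an $\alpha$-Gevrey formal transformation $\Phi=\mathrm{id}+F$, with $F\in\widehat{\mathcal{F}}_{n,\mathbf{m}}^{>0}$ of $\alpha$-Gevrey type, conjugating $Ax+R(x)$ to an element of $Ax+\widehat{\mathcal{N}}$, i.e.\ to a (generally only formal) Poincaré-Dulac normal form. The main subtlety — the possible divergence caused by differentiation losing powers of $i$ not fully compensated by the denominators — is exactly what Theorem \ref{thm-Gevrey} is designed to quantify, so there is no additional obstacle beyond recasting the arithmetic hypothesis $(\ref{Gevrey-big-denom-vf})$ as the analytic condition $(\ref{notsobig})$. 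No further small-divisor argument is needed.
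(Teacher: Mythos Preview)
Your proposal is correct and follows exactly the paper's approach: the paper simply states that Theorem~\ref{thm-Gevrey} implies Theorem~\ref{thm-vf-Gevrey}, relying on the structural verifications (analytic differential, filtering, regular action; diagonal form of $\mathcal S$; monomial projection $\pi$) already carried out for Theorem~\ref{thm-Poincare}, and then reading off the relative big denominator estimate of order $1-\alpha$ from~(\ref{Gevrey-big-denom-vf}). Your write-up makes these steps explicit, including the harmless absorption of finitely many low-degree divisors into the constant, which is precisely what the paper leaves implicit.
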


The particular case for which $0<\alpha <1$, is a recent result of P. Bonckaert and P. De Measschalck in \cite{bonckaert-demaesschalck}. In this case, according to the definition of the present paper, we have
relatively big denominators of order $1-\alpha <1$. But our Theorem \ref{thm-Gevrey} holds for $\alpha \ge 1$ as well, i.e. in the case that we ``do not have denominators" ($\alpha =1)$ or have small denominators ($\alpha >1)$.
Note that condition (\ref{Gevrey-big-denom-vf}) with $\alpha = 1+\tau, \tau >0$ is exactly the
Siegel condition of type $\tau $. Therefore in the case $\alpha >1$ we obtain, as a direct corollary of
Theorem \ref{thm-vf-Gevrey}, that if the eigenvalues satisfy the
Siegel condition of type $\tau >0$~: there exists a constant $C>0$ such that for all $Q\in\Bbb N^n$, with $|Q|\geq 2$
$$
0\neq |(Q,\lambda)-\lambda_i|\geq \frac{C}{|Q|^{\tau}}
$$
then the resonant normal form holds in the $(1+\tau )$-Gevrey category.
The latter result is also known, it was obtained by G. Iooss, E. Lombardi and L.Stolovitch in the works \cite{lombardi-nf} and \cite{stolo-lombardi}. In a more restricted situation, it is known that one can find ``holomorphic sectorial normalization'' with Gevrey asymptotic expansion \cite{stolo-boele}.

\subsection{Normal form of (non isolated) singularities}

Let $\cO_n$ be the space of germs of holomorphic functions at the origin of $\Bbb C^n$.
It is well know that a germ of analytic function $f$ at the origin of $\Bbb C^n$, having an isolated singularity there (i.e. $Df(0)=0$ and $0$ is isolated among the $p$'s such that $Df(p)=0$) is analytically conjugated to a polynomial $P$. This means that there exists a germ of analytic diffeomorphism of $(\Bbb C^n,0)$ such that $f\circ \Phi=P$. This has been extensively studied by V.I. Arnold and his school \cite{arnold-sing1,arnold-sing, Encyclo1}. The usual proof goes as follows~: first of all, since the singularity is isolated then the vector space ${\cO}_n/J_f$ is a finite dimensional vector space. Here, $J_f=(\frac{\partial f}{\partial x_1},\ldots, \frac{\partial f}{\partial x_n})$ denotes the {\it Jacobian ideal}, the ideal in  ${\cO}_n$ generated by the partial derivative of $f$. Then, according to Tougeron's theorem \cite{Tougeron}, \cite{arnold-sing1}[section 6.3], the Jacobian ideal contains a certain power of the maximal ideal ${\cM}_n$ : ${\cM}_n^{k}\subset J_f$. Then, using an homotopy method, one shows that, for any $r_{k}\in {\cM}_n^{k}$ the exists a family $\{\Phi_t\}_{t\in [0,1]}$ of diffeomorphisms of $(\Bbb C^n,0)$ such that
$\Phi_t^*(f+tr_{k})=f$. Then, we obtain the desired result if we set $r_k:=-(f-j^{k-1}(f))$.

\medskip

What does happen when the singularity is {\it a priori} not isolated ? We shall consider the case where $f$ is a perturbation of a homogeneous polynomial $f_0$ of higher order. Both of them are supposed not to have an isolated singularity at the origin. In that case, the vector space ${\cO}_n/J_{f_0}$ is not finite dimensional. Nevertheless, we shall prove that $f=f_0+R$ is formally conjugate to a formal normal form. It's, {\it a priori}, a formal power series satisfying some specific conditions to be defined below. If this normal form was holomorphic in a neighborhood of the origin (that's the case for a polynomial normal form for instance), then we could conclude, by Artin's theorem, that there actually exists an holomorphic transformation to that normal form. The main problem in the general situation is that there no reason why $f$ should have {\it a priori} an holomorphic normal form (it is the case for isolated singularity).

\medskip

In the following, we shall first give a definition of a normal form of a perturbation of $f_0$ with respect to $f_0$. Then we shall prove that there {\it exists an analytic change of coordinates to such a normal form}.
Let us first recall the division theory developed by H. Grauert, H. Hauser and A. Galligo.

\subsubsection{Division theorem}
Let us consider $\al_0,\al_1,\ldots,\al_n$ be nonnegative real numbers which are linearly independent over $\Bbb Q$. Let us consider the affine
 linear form $L(v):=\al_0(v_0-1)+\sum_{i=1}^n\al_iv_i$ on $\Bbb R^{1+n}$. Let $f=\sum_{Q\in \Bbb N^n}f_Qx^Q\in \left(\Bbb C[[x_1,\ldots ,x_n]]\right)^q$ a formal map. We shall denote $x^Q\partial_i:= (0,\ldots,0,x^Q,0,\ldots, 0)$ where $x^Q$ is $i$th component of the vector. Hence, $f$ can be written as $f=\sum_{Q\in \Bbb N^n,i\in \{1,\ldots, q\}}f_{i,Q}x^Q\partial_i$. With this notation, the $i$th component reads $f_i:= \sum_{Q\in \Bbb N^n}f_{i,Q}x^Q$. The {\it initial part} of $f$ is defined to be $In(f):=f_{i_0,Q_0}x^{Q_0}\partial_{i_0}$ where $(Q_0,i_0)\in \Bbb N^n\times\{1,\ldots,q\}$ is the unique minimum $L((i_0,Q_0))=\min_{(i,Q)\in Supp(f)}L(Q)$ and $f_{i_0,Q_0}\neq 0$.

\medskip

Let us define for $s>0$ sufficiently small, $|f|_s:=\sum_{Q\in\Bbb N^n}|f_Q|s^{L(1,Q)}$ where $f=\sum_{Q\in\Bbb N^n}f_Qx^Q\in \cO_n$. If $f=\sum_{Q\in \Bbb N^n,i\in \{1,\ldots, q\}}f_{i,Q}x^Q\partial_i\in \cO_n^q$, then we shall write $|f|_s:=\sum_{i=1}^qs^{L(i,0)}|f_i|_s$. Consider the $\cO_n$-submodule $I$ of $\cO_n^q$ generated by the germs of holomorphic maps $f_1,\ldots, f_r$. Let us define the {\it initial module} $In(I)$ of $I$ to be $\cO_n$-submodule $I$ of $\cO_n^q$ generated by $In(f_1),\ldots, In(f_r)$. Let us define
$$
\De(I):=\{g\in\cO_n\,|\,\text{no monomial of the Taylor expansion at $0$ belongs to } In(I)\}.
$$
Let $m_1,\ldots, m_p$ be a standard basis of $I$ with initial terms $\mu_1,\ldots, \mu_p$ respectively. Let us split the support of $In(I)$ into a disjoint union $\bigcup_{i=1}^rM_i$ of sets $M_i$ such that $M_i\subset \text{supp }(\cO_n\mu_i)$. Let us define
$$
\nabla(I):=\{a=(a_1,\ldots,a_p)\in\cO_n^p\,|\,\text{supp }(a_i\mu_i) \subset M_i, \; i=1,\ldots, p\}.
$$

\begin{thm}\label{galligo-hauser}\cite{galligo-fourier,hauser-muller}
Let $l:\cO_n^p\rightarrow \cO_n^q$ be the $\cO_n$-linear map defined by $l(a)=\sum_{i=1}^pa_im_i$. Assume that the $m_i$'s form a standard basis of $I:=\text{Im }(l)$. Let $K:=\text{Ker }l$. Then the following holds:
\begin{enumerate}

\item \begin{equation}\label{decompo-fond}
\cO_n^q=I\oplus \De(I),\quad \cO_n^p=K\oplus \nabla(I);
\end{equation}
\item There is a constant $c>0$, such that for all small enough $s>0$ :  for each $e\in \cO_n^q$ with $|e|_s<+\infty$, then there exists a unique $a\in\nabla(I)$ and $b\in \De(I)$ with $|a|_s,|b|_s<+\infty$ such that $e=l(a)+b=\sum_{i=1}^p a_im_i +b$ and
$$
(\min_i|m_i|_s)|a|_s+|b|_s\leq c|e|_s.
$$
\end{enumerate}
\end{thm}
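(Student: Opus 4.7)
The plan is to establish both conclusions by means of a weighted division algorithm that, given an input $e$, produces the pair $(a,b)$ termwise in order of increasing $L$-value, and then to bootstrap the convergence of this algorithm from a geometric-series estimate built into the weighted norm $|\cdot|_s$. The entire argument rests on the fact that $L$ is injective on $\mathbb N^n\times\{1,\ldots,q\}$ (because $\alpha_0,\ldots,\alpha_n$ are $\mathbb Q$-linearly independent), so that ``the lowest $L$-monomial'' of any nonzero element of $\mathcal O_n^q$ is unambiguously defined, and on the disjointness of the decomposition $\mathrm{supp}\,In(I)=\bigsqcup M_i$, which allows an unambiguous assignment of each monomial of $In(I)$ to exactly one generator $\mu_i$.

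The algebraic decomposition (1) is produced as follows. Given $e\in\mathcal O_n^q$, I would construct $a$ and $b$ inductively: find the $L$-lowest monomial $e_{i_0,Q_0}x^{Q_0}\partial_{i_0}$ of $e$; if $(i_0,Q_0)\in M_j$, then $\mu_j$ divides $x^{Q_0}\partial_{i_0}$ as a monomial of $\mathcal O_n^q$, so write $x^{Q_0}\partial_{i_0}=\tau\cdot\mu_j$ with $\tau$ a monomial in $\mathcal O_n$, put the corresponding coefficient of $\tau$ into $a_j$, and replace $e$ by $e-(\text{coef})\tau\, m_j$; otherwise, transfer the monomial into $b$ and delete it from $e$. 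Because $In(m_j)=\mu_j$ is the $L$-minimum of $m_j$, subtracting $(\text{coef})\tau\, m_j$ strictly increases the $L$-value of the lowest surviving monomial of $e$; iterating produces formal series $a\in\nabla(I)$ and $b\in\Delta(I)$ with $e=l(a)+b$. Uniqueness follows by the same token: if $l(a)+b=0$ with $a\in\nabla(I), b\in\Delta(I)$, the $L$-lowest monomial of $l(a)$ necessarily lies in some $M_j$ (since every $\mathrm{supp}(a_i\mu_i)\subset M_i$), so it cannot be cancelled by any monomial of $b$, forcing that monomial, and then by induction all of $a$ and $b$, to vanish. This yields both direct sums, as any $a\in\mathcal O_n^p$ can first be mapped to $l(a)\in\mathcal O_n^q$, decomposed, and the $\nabla(I)$-piece lifted back, the difference lying in $K$.

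For the analytic estimate (2), the weighted norm $|\cdot|_s$ is tailored so that multiplication by a monomial $\tau$ satisfies $|\tau g|_s\le s^{L(\tau)}|g|_s$ (here $L(\tau)$ is the $L$-degree of the shifted monomial), and so that $|f+g|_s\le|f|_s+|g|_s$. The crucial observation is that in each division step one has $L(\tau m_j)>L$-value of previously processed monomials, and in fact $L(\tau\mu_j)=L((i_0,Q_0))$ equals the $L$-value of the monomial being cancelled. Writing $c_0:=\min_i|m_i|_s$, one obtains $|\tau|_s\cdot|m_j|_s\le(|e|_s/c_0)\cdot (\text{factor counting multiplicity})$, and a careful bookkeeping over the iteration, organized by the strictly increasing $L$-values of the monomials successively treated, yields a geometric majorant of the form $\sum_k C_0^k s^{L_k-L_0}$ with $L_k\to\infty$. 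For $s$ small enough this series converges and yields the bound $c_0|a|_s+|b|_s\le c|e|_s$ with $c$ independent of $e$.

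The main obstacle is the convergence estimate, not the formal construction: one must verify that the weighted norms behave multiplicatively with the shift operator $\tau\mapsto\tau\cdot(-)$ and that the cumulative error introduced at each step is controllable by $|e|_s$ rather than by a norm that grows along the iteration. Equivalently, one has to show that although infinitely many monomials of $e$ may be treated, each such treatment adds to $|a|_s$ and $|b|_s$ an amount bounded by the contribution of that single monomial to $|e|_s$ (up to a factor absorbed into $c$). This is where the linear independence of $\alpha_0,\ldots,\alpha_n$ over $\mathbb Q$ enters in an essential way: it guarantees that the successive $L$-values form a discrete, well-ordered sequence, so that the division algorithm proceeds in a well-defined order and the geometric series in $s$ is genuinely convergent for $s$ sufficiently small. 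Once this estimate is in place, the convergent version of the decomposition follows immediately by passing to the limit.
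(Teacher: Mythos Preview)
The paper does not prove this theorem at all: it is stated with citations to Galligo \cite{galligo-fourier} and Hauser--M\"uller \cite{hauser-muller} and then used as a black box. So there is no proof in the paper to compare yours against; your sketch is in fact the standard Grauert--Hironaka--Galligo weighted division argument that those references carry out.

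That said, your convergence estimate for part (2) has a genuine gap. The formal algorithm is fine, but the sentence ``each such treatment adds to $|a|_s$ and $|b|_s$ an amount bounded by the contribution of that single monomial to $|e|_s$'' is not correct as stated: when you subtract $(\text{coef})\,\tau m_j$ to kill a monomial of the residual, the tail $\tau(m_j-\mu_j)$ injects \emph{new} monomials into the residual, each of which must later be processed and may itself spawn further tails. So you are not summing over the monomials of the original $e$, and your ``geometric majorant of the form $\sum_k C_0^k s^{L_k-L_0}$'' does not obviously converge (nothing ties $C_0$ to the gaps $L_{k+1}-L_k$). The clean way to close this is to iterate in \emph{levels}: at each level decompose the entire current residual $e^{(k)}$ monomial-by-monomial into $\nabla(I)$ and $\Delta(I)$ pieces, producing $a^{(k)}$, $b^{(k)}$ and a new residual $e^{(k+1)}=-\sum_j a_j^{(k)}(m_j-\mu_j)$. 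Since every monomial of $m_j-\mu_j$ has strictly larger $L$-value than $\mu_j$, one gets $|e^{(k+1)}|_s\le \rho(s)\,|e^{(k)}|_s$ with $\rho(s)=\sum_j |m_j-\mu_j|_s/|\mu_j|_s\to 0$ as $s\to 0$. For $s$ small enough $\rho(s)<1$, and summing the genuine geometric series $\sum_k\rho(s)^k$ gives the bound with $c$ independent of $e$. This is the contraction that your argument is reaching for but does not make explicit.
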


For $q=1$, $I$ is just an ideal on $\cO_n$ and the decomposition $(\ref{decompo-fond})$ reads
$
\cO_n=I\oplus \De(I).
$
The previous theorem asserts that, for any $g\in \cO_n$, there exists $a:=(a_1,\ldots,a_p)\in \nabla(I)$ and $h\in \De(I)$, such that
$
g=\sum_{i=1}^pa_im_i +h
$
and there exists constant $c_r$ (independent of $g$) such that
$
|h|_r<c_r|g|_r,\quad |a|_r<c_r|g|_r.
$.

\begin{remark}
If $e$ vanishes up to order $k$ at the origin then each $a_i$ vanishes up to order $k-k_i$, where $k_i$ is the order of $m_i$ at the origin. The remainder $b$ vanishes up to order $k$.
\end{remark}
\subsubsection{Normal form of deformations of a homogeneous polynomial}


Let $f_0$ be a homogeneous polynomial of degree $q\geq 2$. Let us consider an holomorphic perturbation $f$ of $f_0$ of higher order~: $R=f-f_0$ is a germ of holomorphic function of order $\geq s+1$ at the origin. Let $I:=J_{\nabla f_0}$ be the Jacobian ideal of $f_0$. With the notation above, we set $r:=n$ and $f_i:=\frac{\partial f_0}{\partial x_i}$. Let $m_1,\ldots, m_p$ be a standard basis of $I$. We can write $m_j=\sum_{k=1}^n m_{j,k}f_k$ for some analytic germs $m_{j,k}$ at the origin.

\medskip

Let us define the {\it cohomological operator} ${\mathcal S}:\cO_n^n\rightarrow \cO_n$ to be $${\mathcal S}(U):=Df_0.U.$$
Let $U$ be a germ of holomorphic vector field of positive order $k\geq 2$. Let us consider conjugacy of $f$ with respect to the diffeomorphism $\Phi=id+U$~: we have
\begin{eqnarray}
f(id+U)&= &f_0+ Df_0.U+R+ \underbrace{f_0(id+U)-f_0-Df_0.U}_{=:\Sig_1}\label{conj-equ}\\
&& + \underbrace{R(id+U)-R}_{=:\Sig_2}.\nonumber
\end{eqnarray}
The orders of $Df_0.U$ and $R$ are are greater or equal than $k+s-1$ and $q+1$ respectively.
On the other hand, the orders of $\Sig_1$ and $\Sig_2$ are greater or equal than $2k+q-2$ and $k+q$ respectively. Both are $\geq q+2$.
\begin{prop}[Formal normal form of singularity]
There exists a formal change of coordinates $\widehat \Phi$ such that
$$
f\circ\widehat \Phi-f_0\in \De(I)\otimes \widehat\cO_n
$$
\end{prop}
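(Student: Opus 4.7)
My plan is to derive this proposition as a direct corollary of Proposition \ref{prop-formal-nf}. The first step is to cast the problem in the framework of Section \ref{sec-big-denom-classif-problems}: take $r = n$, $\mathbf{m} = (1,\ldots,1)$, $s = 1$, $P^{(q)} = f_0$, and let the pseudo-group $\mathcal{G} = id + (\mathbb{A}_n)^n_{>1}$ act on $\mathcal{A}_{f_0} = f_0 + (\mathbb{A}_n)_{>q}$ by pull-back $(id+U)^*f = f \circ (id+U)$. The expansion (\ref{conj-equ}) already furnishes the required decomposition, with linear part $\mathcal{S}_{f_0}(U) = Df_0 \cdot U$ and nonlinear remainder $\mathcal{T}(R;U) = \Sigma_1 + \Sigma_2$.

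Next I would verify the filtering property of Definition \ref{def-filtering}. Because $Df_0$ is homogeneous of degree $q-1$, the operator $\mathcal{S}_{f_0}$ sends $\mathcal{F}^{(i)} = ((\mathbb{A}_n)^{(1+i)})^n$ into $(\mathbb{A}_n)^{(q+i)}$, giving (\ref{eq-filtering1}). For condition (\ref{eq-filtering2}), the bounds $\text{ord}_0 \Sigma_1 \geq 2k + q - 2$ and $\text{ord}_0 \Sigma_2 \geq k + q$ already noted in the text for $U$ of order $k \geq 2$ both strictly exceed the threshold demanded by the filtering definition, so the needed strict inequality holds.

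The core of the argument is the construction of complementary subspaces $\mathcal{N}^{(q+i)}$ to $\text{Image}\,\mathcal{S}^{(i)}$ as required by Proposition \ref{prop-formal-nf}. Since the Jacobian ideal $I = (\partial_1 f_0, \ldots, \partial_n f_0)$ is generated by homogeneous polynomials of degree $q-1$, Theorem \ref{galligo-hauser} can be applied with a standard basis $m_1, \ldots, m_p$ of homogeneous elements, and the resulting $\Delta(I)$ is automatically a graded $\mathbb{C}$-subspace of $\mathcal{O}_n$. I would then identify $\text{Image}\,\mathcal{S}^{(i)}$ with the homogeneous piece $I^{(q+i)}$, since every element of $I^{(q+i)}$ has the form $\sum_j U_j \partial_j f_0$ with $U_j$ homogeneous of degree $1+i$. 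Restricting the splitting $\mathcal{O}_n = I \oplus \Delta(I)$ to degree $q+i$ then yields $(\mathbb{A}_n)^{(q+i)} = I^{(q+i)} \oplus \Delta(I)^{(q+i)}$, so I set $\mathcal{N}^{(q+i)} := \Delta(I)^{(q+i)}$.

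With these ingredients in hand, Proposition \ref{prop-formal-nf} produces a formal diffeomorphism $\widehat{\Phi}$ such that $f \circ \widehat{\Phi} - f_0 \in \widehat{\mathcal{N}} = \bigoplus_{i \geq 1} \Delta(I)^{(q+i)}$, which is precisely the formal completion $\Delta(I) \otimes \widehat{\mathcal{O}}_n$ appearing in the statement. The only subtle point I anticipate is the compatibility of the Grauert--Hauser--Galligo decomposition with the natural grading on $\mathcal{O}_n$; this is the step that requires a homogeneous standard basis, after which everything reduces to the degree-by-degree normalization already carried out in the proof of Proposition \ref{prop-formal-nf}.
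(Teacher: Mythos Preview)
Your proposal is correct and follows essentially the same approach as the paper. The paper carries out the degree-by-degree induction directly, using the splitting $\mathcal O_n = I \oplus \Delta(I)$ at each homogeneous degree to solve $Df_0\cdot U_{i-q+1} + R^{(i)} + \{\Sigma_1+\Sigma_2\}^{(i)} \in \Delta(I)$, whereas you package the same induction by invoking Proposition~\ref{prop-formal-nf} after identifying $\mathcal N^{(q+i)} = \Delta(I)^{(q+i)}$; the underlying mechanism is identical, and your observation that the monomial definition of $\Delta(I)$ makes the Grauert--Hauser--Galligo splitting automatically graded is exactly what the paper uses implicitly.
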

\begin{proof}
Indeed, we construct by induction on the order $l$, a formal vector field $U=\sum_{l \geq 2}U_l$ such that
$$
Df_0.U+R+\Sig_1+\Sig_2\in \De(I),
$$
Hence, the homogeneous part of degree $i\geq q+1$ of the Taylor expansion at $0$ reads
$$
Df_0.U_{i-q+1}+R^{(i)}+\{\Sig_1+\Sig_2\}^{(i)} \in \De(I)
$$
where $\{\Sig_1+\Sig_2\}^{(i)} $ denotes the homogeneous of degree $i$ of $\Sig_1+\Sig_2$. It is a polynomial in the $U_j$'s, $j<i-q+1$. Let us decompose $R_i+\{\Sig_1+\Sig_2\}_i$ along $(\ref{decompo-fond})$~: there exists $U_{i-q+1}$ and $h^{(i)}\in \De(I)$ such that $Df_0U_{i-q+1}-h^{(i)}= -R^{(i)}-\{\Sig_1+\Sig_2\}^{(i)} $, that is $Df_0U_{i-q+1}+R^{(i)}+\{\Sig_1+\Sig_2\}^{(i)} =h^{(i)}\in \De(I)$.
\end{proof}
Now we apply the variation of our main theorem, Theorem  \ref{var-main-thm} with $m=0$, $d=1$ to prove
\begin{thm}
\label{thm-functions}
Let $f_0$ be a homogeneous polynomial of degree $q$. Let $f=f_0+R_{>q}$ be an analytic deformation of $f_0$ by $R_{>q}$ which is a order greater than $q$. Then there exists an analytic change of coordinates $\Phi$ in a neighborhood of the origin of $\Bbb C^n$ such that $f\circ \Phi -f_0\in \De(I).$
\end{thm}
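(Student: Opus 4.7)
The plan is to apply the variation of the main theorem, Theorem \ref{var-main-thm}, with $r = s = n$, $\mathbf{m} = (0,\ldots,0)$, $d = 1$, and $q$ equal to the degree of $f_0$. The pseudo-group acting is that of germs of diffeomorphisms of the form $\Phi = id + U$ with $U \in (\mathbb{A}_n)^n_{>1}$; note that the action $f \mapsto f \circ \Phi$ depends only on the values of $U$ and not on its derivatives, which justifies $\mathbf{m} = 0$, while $d = 1$ accounts for $U$ having order $\geq 2$. Rewriting (\ref{conj-equ}), the condition $f \circ \Phi - f_0 \in \Delta(I)$ (with $I$ the Jacobian ideal of $f_0$) becomes an equation of the type (\ref{eq-main}), namely
\[
\mathcal{S}(U) \;=\; \pi\bigl(\mathcal{T}(U)\bigr), \qquad \mathcal{S}(U) := Df_0 \cdot U, \qquad \mathcal{T}(U) := -\bigl(R + \Sigma_1(U) + \Sigma_2(U)\bigr),
\]
where $\pi$ is the canonical projection onto $I$ along $\Delta(I)$ provided by Theorem \ref{galligo-hauser}, and $\Sigma_1, \Sigma_2$ are as in (\ref{conj-equ}).

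I would next verify the structural hypotheses 1--4 of Theorem \ref{var-main-thm}. The operator $\mathcal{S}$ is linear and raises the order by $q - 1 = q - d$ since $Df_0$ is homogeneous of degree $q-1$; the projection $\pi$ onto $I = \mathrm{Image}(\mathcal{S})$ is furnished by Theorem \ref{galligo-hauser}. The operator $\mathcal{T}$ is analytic differential of order $\mathbf{m} = 0$, of the form $\mathcal{T}(U)(x) = W(x, U(x))$ with $W(x, u) = -f(x+u) + f_0(x) + Df_0(x)u$; in particular $\mathcal{T}(0) = -R$ is of order $\geq q+1$. For $U_1, U_2$ of order $\geq 2$, the integral remainder gives
\[
\mathcal{T}(U_1) - \mathcal{T}(U_2) \;=\; -\int_0^1 \bigl[DR(x+U_t) + Df_0(x+U_t) - Df_0(x)\bigr]\cdot(U_1 - U_2)\,dt
\]
with $U_t := U_2 + t(U_1 - U_2)$; each factor in the bracket has order $\geq q$ at the origin, so $\mathcal{T}$ strictly raises the order by $q - d$. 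For the $d$-regularity condition (so $p_{j,0} = q$), the only partial derivatives of $W$ that intervene are $\partial W/\partial u_j$, and
\[
\tfrac{\partial W}{\partial u_j}(x, U(x)) \;=\; \tfrac{\partial f_0}{\partial x_j}(x) - \tfrac{\partial f_0}{\partial x_j}(x + U(x)) - \tfrac{\partial R}{\partial x_j}(x + U(x))
\]
has order $\geq q$: the first two terms combine into a leading contribution of order $\geq (q-2) + 2 = q$, and $\partial R/\partial x_j$ has order $\geq q$. This establishes items 1--4.

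The crux of the argument is the big denominators estimate (\ref{d-BD}), which with $\mathbf{m} = 0$ and $d=1$ reduces to constructing a right inverse $\mathcal{S}^{-1}\circ \pi$ such that, for every $G \in \mathbb{A}_n$ of order $\geq q+1$,
\[
\|U_i^{(j)}\| \;\leq\; C\,\|G^{(j+q-1)}\|, \qquad i = 1,\ldots,n,\;\; j \geq 2,
\]
with $C$ independent of $j$ and $G$. To build this inverse, fix a standard basis $m_1,\ldots,m_p$ of $I$ and write $m_l = \sum_k m_{l,k}\,\partial f_0/\partial x_k$. Given $G$ of order $\geq q+1$, Theorem \ref{galligo-hauser} yields a unique decomposition $G = \sum_l a_l m_l + b$ with $a \in \nabla(I)$ and $b \in \Delta(I)$; setting $U_k := \sum_l a_l m_{l,k}$ solves $\mathcal{S}(U) = \pi(G)$.

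The main obstacle will be to promote the continuity bound of Theorem \ref{galligo-hauser}, stated in terms of the weighted analytic norms $|\cdot|_s$, into the uniform degreewise estimate above. Since $f_0$ is homogeneous, $I$ and the $m_l$ may be chosen homogeneous, so the division respects the grading: a homogeneous $G$ of degree $j+q-1$ produces a homogeneous $U$ of degree $j$. Taking the weights $\alpha_0,\ldots,\alpha_n$ in the linear form $L$ of Theorem \ref{galligo-hauser} close enough to equal (while keeping $\mathbb{Q}$-linear independence), the norm $|\cdot|_s$ is essentially comparable to a weighted sum of the homogeneous-degree norms, and the inequality $(\min_l |m_l|_s)|a|_s + |b|_s \leq c|G|_s$ splits into a uniform estimate on each graded piece, providing the required constant $C$. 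Once the big denominators property is in place, Theorem \ref{var-main-thm} produces an analytic solution $U \in (\mathbb{A}_n)^n_{>1}$, and $\Phi := id + U$ is the sought-after analytic change of coordinates realizing $f\circ\Phi - f_0 \in \Delta(I)$.
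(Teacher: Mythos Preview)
Your proposal is correct and follows the same route as the paper: apply Theorem \ref{var-main-thm} with $\mathbf{m} = 0$, $d = 1$, build $\mathcal{S}^{-1}\circ\pi$ from the Grauert--Hauser--Galligo division (Theorem \ref{galligo-hauser}) by writing $U_k = \sum_l a_l m_{l,k}$, and check $1$-regularity via $\text{ord}_0(\partial R/\partial x_j) \geq q$. One small slip is that the target is scalar-valued, so $s = 1$, not $s = n$; on the other hand you are more careful than the paper in flagging the passage from the weighted $|\cdot|_s$-estimate of Theorem \ref{galligo-hauser} to the degreewise bound (\ref{d-BD}), which the paper leaves implicit.
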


\begin{proof}
Set $\cS U:=Df_0U$. We have
$$
Df_0U=\sum_{i=1}^pa_im_i=\sum_{i=1}^pa_i\left(\sum_{k=1}^nm_{i,k}f_k\right)=\sum_{k=1}^n\left(\sum_{i=1}^pa_im_{i,k}\right)f_k
$$
According to theorem \ref{galligo-hauser}, the problem to solve has the big denominator property of order $0$. Indeed, we obtain a good estimate of the $a_i$'s in term of $Df_0U$; so, we obtain good estimate of $U_i:=\sum_{i=1}^pa_im_{i,k}$ in term of $Df_0U$. Moreover, according to the second point of that theorem, the projection $\pi$ onto the image $J_{f_0}$ satisfies
$$
|\pi(f)|_r\leq  \left|\sum_{i=1}^n a_i\frac{\partial f_0}{\partial x_i}\right|_r\leq c_r |f|_r.
$$
where $f=\sum_{i=1}^n a_i\frac{\partial f_0}{\partial x_i}+h$ and $h\in \De(I)$. 
Furthermore, since $R$ has order $\geq q+1$ at the origin, then $\frac{\partial R}{\partial x_j}$ has order $\geq q$ at the origin. That is,the associated operator $\cT$ is $1$-regular. Therefore Theorem \ref{thm-functions}
is a direct corollary of Theorem \ref{galligo-hauser} and our Theorem \ref{var-main-thm} with $m=0$, $q=s$ and $d=1$.
\end{proof}

\begin{remark}
In the case of isolated singularity, our proof of conjugacy to a polynomial normal form is a direct one. Theorem \ref{galligo-hauser} shows that $\De(J_{f_0})$ contains only a finite number of monomials. This replaces Tougeron's theorem. Our method replaces the usual ``homotopic method''. Our proof is also quite different from Arnold's original one \cite{arnold-sing} which is a kind a Newton method.
\end{remark}
\begin{remark}
In the non-isolated case and if the perturbation is formally conjugated to an analytic normal form $g$ then, Artin's theorem \cite{artin} shows that there exists a germ of analytic diffeomorphism that conjugates $f$ to $g$. Our result shows that the same holds if $g$ is only a formal normal form (and such a formal conjugacy always exists) without using the difficult theorem of Artin.
 \end{remark}

\begin{appendices}
\section{Normal form for $n$-tuples of linearly independent vector fields on $\mathbb R^n$, Riemannian metrics and conformal structures by M. Zhitomirskii}\label{conformal-section}

\subsection{Analytic normal forms}

A Riemannian metrics on $\mathbb R^n$ can be treated as an $n$-tuple of pointwise linearly
independent vector fields on $\mathbb R^n$ defined up to multiplication by an $n\times n$ matrix
$T(x)\in SO(n)$.  A conformal structure on $\mathbb R^n$ can be treated as an $n$-tuple of pointwise linearly
independent vector fields on $\mathbb R^n$ defined up to multiplication by an $n\times n$ matrix
$T(x)\in SO(n)$ and by a non-vanishing function $H(x)$.

It is convenient to associate to each of the object in the title of this section an $n\times n$ matrix
whose entries are analytic function germs.

\begin{definition}
\label{associated-matrix}
Given an $n$-tuple of vector fields
$$V_i = f_{1i}(x)\frac{\partial}{\partial x_1} + \cdots +  f_{ni}(x)\frac{\partial}{\partial x_n}, \quad i=1,\ldots, n$$
we associate to it an $n\times n$ matrix $M(x)$ in which the $k$th {\it column} is the tuple $\left(f_{1k}(x), ..., f_{nk}(x)\right)^t$ of coefficients of the vector field $V_k$.
Given a Riemannian metrics, respectively a conformal structure on $\mathbb R^n$,
we treat it as a tuple of pointwise linearly
independent vector fields $V_1,..., V_n$ on $\mathbb R^n$ defined up to multiplication by an $n\times n$ matrix
$T(x)\in SO(n)$, respectively up to multiplication by $T(x)\in SO(n)$ and by a scalar function $H(x)$, and we associate to the Riemannian metrics, respectively the conformal structure, the matrix $M(x)$ associated with $(V_1,...,V_n)$.
\end{definition}

\begin{thm}
\label{thm-anal-nf-3-objects}
An $n$-tuple of pointwise linearly independent analytic vector field germs on $\mathbb R^n$ can be reduced by a local
analytic diffeomorphism to a normal form with the associated matrix $M(x)$ satisfying the equation
\begin{equation}
\label{eq-nf-tuple}
M(x)\cdot \begin{pmatrix}x_1\cr \vdots \cr x_n\end{pmatrix} \equiv 0.
\end{equation}
A germ of analytic Riemannian metrics on $\mathbb R^n$ can be reduced by a local analytic diffeomorphism to a normal form with the
associated matrix $M(x)$ satisfying  (\ref{eq-nf-tuple}) and the equation
\begin{equation}
\label{eq-nf-Riem}
M^t(x) \equiv M(x).
\end{equation}
A germ of analytic conformal structure on $\mathbb R^n$ can be reduced by a local analytic diffeomorphism to a normal form with
the associated matrix $M(x)$ satisfying (\ref{eq-nf-tuple}), (\ref{eq-nf-Riem})  and the equation
\begin{equation}
\label{eq-nf-conformal}
\text{trace }M(x) \equiv 0.
\end{equation}
\end{thm}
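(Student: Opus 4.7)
The plan is to derive all three assertions as applications of Theorem \ref{main1}. In each case I will first perform a preliminary linear change of coordinates (and, in the Riemannian and conformal cases, a pointwise gauge choice by $SO(n)$ and by a positive scalar) to reduce to the situation $M(0)=I$. Writing $M(x)=I+N(x)$ with $N(0)=0$, the normal form conditions in the theorem will be read as conditions on the higher-order part $N$. The leading term is $F_0=I$ of degree $q=0$ and the filtering multi-index is ${\bf m}=(1,\ldots,1)$ (extended by zeros for the gauge components in the second and third cases).

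For the $n$-tuple of vector fields, the pseudogroup consists of diffeomorphisms $\Phi=id+U$ with $\operatorname{ord}_0 U\ge 2$. A direct computation of the pushforward shows that the cohomological operator is $\mathcal{S}(U)=DU$ (the Jacobian), and I will take the complement $\mathcal{N}^{(i)}=\{N\text{ of degree }i:N(x)\cdot x\equiv 0\}$. For $N$ homogeneous of degree $i\ge 1$, setting $U:=\tfrac{1}{i+1}N(x)\cdot x$ and using Euler's identity $DU\cdot x=(i+1)U$ gives $(N-DU)\cdot x=0$; direct-sumness follows because $DU\cdot x=0$ forces $U=0$. The right-inverse estimate $\|U\|\le\tfrac{1}{i+1}\|N^{(i)}\|$ is precisely the big-denominator condition (\ref{estimates-big-denom}) with $C=1$ and $m_j=1$. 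The regularity, filtering and analytic differential properties are routine verifications in the spirit of Section \ref{vf}, after which Theorem \ref{main1} yields the first assertion.

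For the Riemannian case I will adjoin the gauge $M\mapsto MT$, $T(x)\in SO(n)$, $T(0)=I$; at the Lie-algebra level this adds an $\mathfrak{so}(n)$-valued algebraic summand $A$ to $\mathcal{S}$, which can be used to absorb the antisymmetric part of $DU$, so that restricting to symmetric $N$ the effective cohomological operator is $\widetilde{\mathcal{S}}(U)=DU+DU^t$. Using $(DU)\cdot x=(i+1)U$ and $(DU^t)\cdot x=\nabla(x\cdot U)-U$, the explicit right-inverse will be $h(x):=(x^tN(x)x)/(2(i+1))$ together with $U(x):=(N(x)\cdot x-\nabla h)/i$; a short computation shows that this satisfies both $(DU+DU^t)\cdot x=N\cdot x$ and the consistency $x\cdot U=h$, placing $N-\widetilde{\mathcal{S}}(U)$ in the required complement $\mathcal{N}^{(i)}$. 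Since $\|N(x)\cdot x\|\le\|N^{(i)}\|$ and $\|\nabla h\|\le\|N^{(i)}\|$ (the factor $i+2$ from differentiating $h$ cancels with $2(i+1)$ in its denominator), one gets $\|U\|\le C\|N^{(i)}\|/i$, again the big-denominator bound. The conformal case will be handled with a further scalar gauge $M\mapsto(1+\rho)M$; the trace condition fixes $\rho$ algebraically in terms of $\operatorname{tr} N$ and $\operatorname{div} U$, while the symmetric traceless part is treated exactly as in the Riemannian case.

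The hard part will be the bookkeeping for the combined diffeomorphism-plus-gauge action: verifying that it is analytic differential, filtering and regular in the sense of Definitions \ref{def-filtering} and \ref{def-regular}, and cleanly identifying the complementary subspace $\mathcal{N}^{(i)}$ when the normal form must be simultaneously symmetric (or symmetric-traceless) and annihilate $x$. All of the analytic content, however, is provided by Theorem \ref{main1}; the big denominators come for free from Euler's identity $DU\cdot x=(i+1)U$.
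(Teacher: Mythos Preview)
Your treatment of the first two cases is essentially the paper's proof: the same explicit right-inverse via Euler's identity for $n$-tuples of vector fields, and the same two-step computation (first solve for the scalar $h=x\cdot U$, then for $U$) in the Riemannian case. The paper obtains these complements as orthogonal complements for the Belitskii inner product rather than by guessing, but the resulting formulas and estimates are identical to yours.

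There is, however, a genuine gap in your conformal argument. You write that ``the trace condition fixes $\rho$ algebraically in terms of $\operatorname{tr} N$ and $\operatorname{div} U$, while the symmetric traceless part is treated exactly as in the Riemannian case.'' This is circular: once you impose $M\cdot x=0$ the diffeomorphism $U$ depends on the conformal factor $\rho$ (the condition becomes $iU+\nabla(x\cdot U)=N\cdot x+2\rho\, x$), so you cannot determine $\rho$ from $\operatorname{div} U$ without first knowing $\rho$. If instead you eliminate $U$ first, the trace condition becomes a genuine second-order equation for $\rho$,
\[
\frac{1}{i(i+1)}\,\|x\|^2\,\Delta \rho \;+\; \frac{(n-2)(i-1)}{i+1}\,\rho \;=\; z,
\]
with $z$ built from $A$. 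Proving that this operator on degree-$i$ polynomials is invertible with a bound on the inverse that is \emph{uniform in $i$} is not a consequence of Euler's identity; it requires the spectral analysis of Lemma~\ref{prop-CR-eq-h}, which shows that all eigenvalues are at least $\tfrac12$ provided $n\ge 3$. Note in particular that this is where the dimensional restriction enters (for $n=2$ the operator degenerates and one appeals instead to the classical conformal flatness of surfaces). So the claim that ``the big denominators come for free from Euler's identity'' is correct for the diffeomorphism component $U$ but not for the conformal gauge $\rho$; that estimate is the non-trivial heart of the conformal case and is missing from your outline.
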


For $n=2$ the given normal form for Riemannian metrics is close (though not the same) to the Gauss lemma on a certain property of geodesic coordinates, see \cite{Gauss}. The Gauss lemma can be generalized to any $n$. Its proof, in formal category and for any $n$, in terms of normal forms, can be found in \cite{Spain}.

\medskip

Note that any matrix $n\times n$ matrix $M(x)$ satisfying (\ref{eq-nf-tuple}) and
(\ref{eq-nf-Riem}) starts with terms of order $\ge 2$. Its quadratic part {\it can be identified with the space of
Riemannian curvature  tensors}. If $n=2$ then such a quadratic part be written as
\begin{equation*}
M^{(2)}(x) = K\cdot \begin{pmatrix}x_2^2 & -x_1x_2\cr -x_1x_2 & x_1^2\end{pmatrix}, \ \ K\in \mathbb R
\end{equation*}
and the parameter $K$ {\it can be identified with the curvature of Gauss.}

\medskip

Note also that any $2\times 2$ matrix $M(x)$ satisfying (\ref{eq-nf-tuple}),
(\ref{eq-nf-Riem}), (\ref{eq-nf-conformal}) is the zero matrix.
This matches the well
known theorem that for $n=2$ any conformal structure is locally conformally flat, see \cite{conf-str}.

\medskip

For $n=3$,
respectively $n\ge 4$, any $n\times n$ matrix $M(x)$  satisfying (\ref{eq-nf-tuple}),
(\ref{eq-nf-Riem}), (\ref{eq-nf-conformal}) starts with terms of order $\ge 3$, respectively of order $\ge 2$.  If $n=3$ then the terms of order $3$ can be identified with the Cotton tensor, and if $n\ge 4$ then the terms of order $4$ can be identified with the Weyl tensor.

\medskip

Certainly these relations between the normal form and the classical tensors do {\it not} require
analytic normal form. For these relations it is enough to have the given normal form in formal category
and moreover, a normal form for a jet of small order will be enough.

\medskip

In the following subsections we will explain:

\medskip

\noindent  (a) \ how the normal forms of Theorem \ref{thm-anal-nf-3-objects} were obtained in the
formal category;

\medskip

\noindent (b) \ how Theorem \ref{thm-anal-nf-3-objects} follows from the same results in the formal category
and our Theorem \ref{main1}, i.e. we will prove that in the three classification problems of this section
there are big denominators.

\subsection{Explanation of formal normal forms. Belitskii inner product}
\label{sec-Bel-inner-product}

We will use the following notations:

\medskip

\noindent $\bullet $ \ By $\an(\mathcal M_{r\times r})$
we denote the space of $r\times r$ matrices whose entries are germs at $0\in \mathbb R^n$ of analytic
functions of $n$ variables.

\medskip


\noindent $\bullet $ By $\an^{(i)}(\mathcal M_{r\times r})$ we denote the subspace of $\an(\mathcal M_{r\times s})$ consisting of
matrices whose entries are homogeneous functions of degree $i$.

\medskip

\noindent $\bullet $ By $\an(\mathcal M_{r\times r})_{>d}$ we denote the subspace of $\an(\mathcal M_{r\times r})$ consisting of
matrices whose entries have zero $d$-jet at $0$.


\medskip


\noindent $\bullet $ \ $\an(\text{so}(n)) = \left\{M\in \an(\mathcal M_{r\times r}): \ M(x) = - M^t(x)\right\}$,
i.e. the space of skew-symmetric matrices whose entries are analytic function germs.

\medskip

\noindent $\bullet $ \ Finally we will use notation  $\mathbf x = \begin{pmatrix}x_1\cr \vdots \cr x_n\end{pmatrix}.$

\medskip

For the problem of local classification of $n$-tuples of vector field germs on $\mathbb R^n$ of the form
\begin{equation}
\label{n-tuples-vf}
V_1 = \pxone + h.o.t. , \ \cdots , \ V_n = \frac{\partial}{\partial x_n} + h.o.t.
\end{equation}
with respect to local diffeomorphisms one has, in terms of notations of section \ref{sec-big-denom-classif-problems},
$$r = n, \  m_i = 1, \ 1\leq i\leq n, \ s = n^2, \ \ q = 0, \ \mathcal F_{1,\bf m} = \left(\mathbb A_n^n\right)_{>1}.$$
The group $\mathcal G$ acts on the affine space $I+ \an(\mathcal M_{n\times n})_{>0}$. Here, $I$ denotes the constant matrix $I=\text{diag}(1,\ldots, 1)$. The action is as follows $$(id + \phi (x))_*(I + M(x)) = \left(I + D\phi(x)\right)^{-1}\left(I + M(x+\phi (x)\right).$$
It is a differential action of order $1$.
The linear operator $\mathcal S$ is the operator

\begin{equation}
\label{S-for-tuples}
\mathcal S_1: \ \left(\mathbb A_n^n\right)_{>1} \ \to \ \an(\mathcal M_{n\times n})_{>0}, \ \
\mathcal S_1(\phi ) = - D\phi(x).
\end{equation}
Hence, we define
\begin{eqnarray*}
\cT_1(M;\phi) &:=&\left[\left(I + D\phi(x)\right)^{-1}-I+D\phi(x)\right]\left(I + M(x+\phi (x)\right)\\
&&-D\phi(x)M(x+\phi (x))+M(x+\phi (x)).
\end{eqnarray*}
Therefore, we have
$$
(id + \phi (x))_*(I + M(x))=I+\cS_1(\phi)+\cT_1(M;\phi)
$$
We have $\cT(M, 0)$ has order $\geq 1$ at the origin. Let us investigate the regularity of $\cT_1$. We have,
\begin{eqnarray*}
\frac{\partial \cT_1}{\partial \phi}(\phi)G (x) &=& \left[\left(I + D\phi(x)\right)^{-1}-I+D\phi(x)\right]DM(x+\phi(x))G\\
&&+D\phi(x)DM(x+\phi(x))G(x) +DM(x+\phi(x))G(x).
\end{eqnarray*}
Since $M$ vanishes at the origin, the coefficient in front of $G$ has order $\geq 0=p_{j,0}$. On the other hand, we have
\begin{eqnarray*}
\frac{\partial \cT_1}{\partial \phi'}(\phi)DG (x) &=& \left[\sum_{k\geq 2}(-1)^kk[D\phi(x)]^{k-1}DG\right]\left(I + M(x+\phi (x)\right)\\
&&+DG(x)M(x+\phi(x)).
\end{eqnarray*}
Since $\phi$ has order $\geq 2$ at the origin, then $D\phi(x)$ has order $\geq 1$. Moreover, $M$ has order $\geq 1$ at the origin. Hence the coefficient in front of $DG$ has order $\geq 1=p_{j,1}$, for all $1\leq j\leq n$. Therefore, the differential analytic map $\cT_1$ is regular.

To find a complementary space to the image of $\mathcal S^{(i)}_1$, the restriction of $\mathcal S_1$ to
$\left(\mathbb A_n^n\right)^{(i)}$ we use the following inner product introduced by G. Belitskii
(in fact, it goes back to \cite{fischer}) and used by G. Belitskii
him to construct a number of formal normal forms in various local classification problems
\cite{Bel-1}.

\begin{definition}
The Belitskii inner product of
two homogeneous functions of $n$ variables of the same degree $i$,  is defined as follows:
$$f = \sum_{\vert \alpha \vert =i}f_\alpha x^\alpha , \
g = \sum_{\vert \alpha \vert =i}g_\alpha x^\alpha  \ \implies
<f,g> = \sum _{\vert \alpha \vert = i}\alpha ! f_\alpha \bar g_\alpha .$$
Here $\alpha = (\alpha _1,..., \alpha _n)$ is a multiindex,
$x^\alpha = x_1^{\alpha _1}\cdots x_n^{\alpha _n}$ and $\alpha ! = \alpha _1!\cdots \alpha _n!$. The inner product of tuples of functions $f,g\in \left(\mathbb A_n^k\right) ^{(i)}, \ f = (f_1,...,f_k), g = (g
_1,...,g_k)$
is defined by $<f,g> = <f_1,g_1> + \cdots + <f_k,g_k>$.
\end{definition}

\ni This inner product is very convenient because of the following statement.

\begin{prop}[\cite{Bel-1}]
\label{prop-Bel-inner-product}
The operator $f\to \frac{\partial f}{\partial x_j}$ has conjugate operator, with respect to the Belitskii  inner product, $g\to x_jg$.
\end{prop}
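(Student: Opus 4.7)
The plan is a direct computation: expand both sides of the identity $\langle \partial f/\partial x_j, g\rangle = \langle f, x_j g\rangle$ on monomials and show that the multinomial factor $\alpha!$ in the Belitskii inner product exactly absorbs the factor $\alpha_j$ produced by the partial derivative.

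First I would reduce to the scalar case (the tuple case then follows componentwise from the definition $\langle f,g\rangle = \sum_k \langle f_k,g_k\rangle$), and fix a homogeneous polynomial $f=\sum_{|\alpha|=i} f_\alpha x^\alpha$ of degree $i$ and a homogeneous polynomial $g=\sum_{|\beta|=i-1} g_\beta x^\beta$ of degree $i-1$, so that $\partial f/\partial x_j$ and $g$ have the same degree and $f$ and $x_j g$ have the same degree, and both Belitskii inner products are well defined.

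Next I would compute each side. On one hand, the coefficient of $x^\beta$ in $\partial f/\partial x_j$ is $(\beta_j+1) f_{\beta+e_j}$, where $e_j$ denotes the $j$-th standard multiindex, so
\[
\Bigl\langle \frac{\partial f}{\partial x_j},\, g\Bigr\rangle = \sum_{|\beta|=i-1} \beta!\,(\beta_j+1)\, f_{\beta+e_j}\, \overline{g_\beta}.
\]
On the other hand, the coefficient of $x^\alpha$ in $x_j g$ is $g_{\alpha-e_j}$ when $\alpha_j\geq 1$ (and $0$ otherwise), so after the substitution $\alpha=\beta+e_j$,
\[
\langle f,\, x_j g\rangle = \sum_{|\alpha|=i,\ \alpha_j\geq 1} \alpha!\, f_\alpha\, \overline{g_{\alpha-e_j}} = \sum_{|\beta|=i-1} (\beta+e_j)!\, f_{\beta+e_j}\, \overline{g_\beta}.
\]

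The only remaining step is the elementary factorial identity $(\beta+e_j)! = (\beta_j+1)\,\beta!$, which equates the two expressions termwise. There is no real obstacle here; the point of the proof is purely bookkeeping, and the statement is essentially the reason the Belitskii inner product (with its multinomial weight $\alpha!$) is the natural choice for turning differentiation and multiplication into formal adjoints.
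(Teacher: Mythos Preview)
Your proof is correct; the computation is clean and the key factorial identity $(\beta+e_j)! = (\beta_j+1)\,\beta!$ is exactly what makes the adjointness work. Note that the paper does not actually prove this proposition --- it is quoted from Belitskii~\cite{Bel-1} without argument --- so there is nothing to compare your approach against; your direct monomial computation is the standard (and essentially only) way to verify this fact.
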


As an immediate corollary of this proposition we obtain that the adjoint operator to
the operator (\ref{S-for-tuples}) with respect to Belitskii inner product has the form $M(x)\to -M(x)\cdot \mathbf x$. Therefore the
orthogonal complement to the image of $\mathcal S^{(i)}_1$ consists of matrices $M(x)$ such that
$M(x)\cdot \mathbf x = 0$. Now the first normal form of Theorem \ref{thm-anal-nf-3-objects} holds in the formal category by Proposition \ref{prop-formal-nf}.

\medskip

Now, for the problem of local classification of {\it Riemannian metrics} on $\mathbb R^n$, viewed
 as the problem as
classification the $n$-tuples of vector fields of form (\ref{n-tuples-vf}) with respect to local diffeomorphisms and multiplication by a matrix-function $exp Q(x), Q(x)\in \text{so}(n)(x)$, one has, in terms of notations of section \ref{sec-big-denom-classif-problems}
$$
r= n+n(n-1)/2,\ m_i=1,\ 1\leq i\leq n,\ \ m_i=0, \ n+1\leq i\leq r
$$
$$s = n^2, \ \ q = 0, \
\mathcal F = \left(\mathbb A_n^n\right)_{>1}\times \an(\text{so}(n))_{>0}.$$
The group $\mathcal G = id + \mathcal F$ acts
the affine space $I+ \an(\mathcal M_{n\times n})_{>0}$ as follows:
$$\left((id + \phi (x), exp Q(x)\right)_*(I + M(x)) = exp Q(x)\cdot \left(I + D\phi(x)\right)^{-1}\left(I + M(x+\phi (x)\right).$$
The linear operator $\mathcal S$ is the operator
\begin{equation}
\label{S-for-Riem}
\begin{split}
\mathcal S: \ \left(\mathbb A_n^n\right)_{>1}\times \an(\text{so}(n))_{>0} \ \to   \ \an(\mathcal M_{n\times n})_{>0}, \\
{\mathcal S}_2(\phi (x), Q(x)) = - D\phi(x) + Q(x)
\end{split}
\end{equation}
Let us define
$$
\cT_2(M;\phi,Q):= (I+Q)\cT_1(M;\phi)+Q\cS_1(\phi)+\left(\sum_{k\geq 2}\frac{Q^k}{k!}\right)(I+\cS_1(\phi)+\cT_1(M;\phi)).
$$
We have $\text{ord}_0(\cT_2(M;0))\geq 1$. From the previous computation and the fact that $Q$ has order $\geq 1$ at the origin, we obtain
\begin{eqnarray*}
\text{ord}_0\left(\frac{\partial \cT_2}{\partial \phi}\right) & \geq & 0=p_{j,0},\quad 1\leq j\leq n\\
\text{ord}_0\left(\frac{\partial \cT_2}{\partial \phi'}\right) & \geq & 1=p_{j,1},\quad 1\leq j\leq n\\
\text{ord}_0\left(\frac{\partial \cT_2}{\partial Q}\right) & \geq & 1=p_{j,0},\quad ,\quad n+1\leq j\leq r.
\end{eqnarray*}
Therefore, $\cT_2$ is regular.
Using Proposition \ref{prop-Bel-inner-product} it is easy to prove that the adjoint operator to
(\ref{S-for-Riem}) with respect to Belitskii inner product has the form
$$M(x) \ \to \left(-M(x)\cdot \mathbf x, \frac{1}{2}\left( M(x)-M^t(x)\right)\right).$$
Indeed, since we have $Q+Q^t=0$, then $\left<Q,M\right>=-\left<Q,M^t\right>$ for any matrix germ. As a consequence, we have
\begin{eqnarray*}
\left<{\mathcal S}_2(\phi (x), Q(x)), M\right> &=& \left<- D\phi(x) + Q(x), M\right>\\
& = & \left<\phi, -M.\bf x\right> +\left<Q,M\right>\\
& = & \left<\phi, -M.\bf x\right> +\left<Q,\frac{1}{2}(M-M^t)\right>
\end{eqnarray*}
As a consequence the orthogonal complementary space with respect to the Belitskii inner product gives
the normal form of Theorem \ref{thm-anal-nf-3-objects} for Riemannian metrics in the formal category.

\medskip

In the same way we obtain the normal form of Theorem \ref{thm-anal-nf-3-objects} for conformal structures.
In this case we have
$$
r=n+ n(n-1)/2+1,\ m_i=1,\ 1\leq i\leq n,\ m_i=0,\ n+1\leq i\leq r.
$$
$$s = n^2, \ q = 0, \
\mathcal F = \left(\mathbb A_n^n\right)_{>1}\times \an(\text{so}(n))_{>0}\times \left(\mathbb A_n^1\right)_{>0}.$$
The action is define to be
$$
\left((id + \phi (x), exp Q(x),1+h(x)\right)_*(I + M(x)) = (1+h(x))exp Q(x)\cdot \left(I + D\phi (x)\right)^{-1}\left(I + M(x+\phi (x)\right).
$$
and the operator $\mathcal S_3$ has the form
\begin{equation}
\label{S-for-Riem}
\begin{split}
\mathcal S_3: \ \left(\mathbb A_n^n\right)_{>1}\times \an(\text{so}(n))_{>0}\times \left(\mathbb A_n^1\right)_{>0} \ \to   \ \an(\mathcal M_{n\times n})_{>0}, \\
{\mathcal S}(\phi (x), Q(x), h(x)) = - D\phi(x) + Q(x) + h(x)\cdot I.
\end{split}
\end{equation}
We define the associates differential analytic map of order $(1,\ldots,1,0,\ldots,0, 0)$~:
$$
\cT_3(M;\phi,Q,f):= \cT_2(M;\phi,Q)+h\cS_2(\phi,Q)+h\cT_2(M;\phi,Q). 
$$
We have $\text{ord}_0(\cT_3(M;0))\geq 1$. Moreover, we have
\begin{eqnarray*}
\text{ord}_0\left(\frac{\partial \cT_3}{\partial \phi}\right) & \geq & 0=p_{i,0},\quad 1\leq i\leq n\\
\text{ord}_0\left(\frac{\partial \cT_3}{\partial \phi'}\right) & \geq & 1=p_{i,1},\quad 1\leq i\leq n\\
\text{ord}_0\left(\frac{\partial \cT_3}{\partial Q}\right) & \geq & 1=p_{i,0},\quad n+1\leq i\leq r-1\\
\text{ord}_0\left(\frac{\partial \cT_3}{\partial h}\right) & \geq & 1=p_{r,0}.
\end{eqnarray*}
The conjugate linear operator with respect to the Belitskii inner product has the form
$$M(x) \ \to \left(-M(x)\cdot \mathbf x, \ \frac{1}{2}(M(x)-M^t(x)), \ \text{trace }M(x)\right)$$
which implies the last normal form of  Theorem \ref{thm-anal-nf-3-objects} in the formal category.

\subsection{Big denominators. Proof of Theorem \ref{thm-anal-nf-3-objects}}

The action of the group in each of the classification problems of this section is a differential
action of order

\noindent ${\bf m}=(1,\ldots,1)\in\Bbb N^n$ for tuples of vector fields;

\smallskip

\noindent ${\bf m} = (1,\ldots 1, 0,\ldots, 0)\in\Bbb N^{n+n(n-1)/2}$ for Riemannian metrics;

\smallskip

\noindent ${\bf m}= (1,\ldots 1, 0,\ldots, 0, 0)\in\Bbb N^{n+n(n-1)/2+1}$ for conformal structures.

\medskip

In this section we will prove that we have big denominators of order 1 and that the formal normal form
 in Theorem \ref{thm-anal-nf-3-objects} is uniformly bounded. In other words, we will prove that the assumptions
 of  Theorem \ref{main1} hold true and consequently Theorem \ref{thm-anal-nf-3-objects} holds not only in formal, but also in analytic category. Introduce
$$\mathcal N = \left\{A\in \an(\mathcal M_{n\times n}): \ \ A(x)\cdot \mathbf x \equiv 0\right\}$$
$$\mathcal N_{RM} = \left\{A\in \an(\mathcal M_{n\times n}): \ \ A(x)\cdot \mathbf x =0, \ A(x) \equiv  A^t(x)\right\}$$     $$\mathcal N_{CS} = \left\{A\in \an(\mathcal M_{n\times n}): \ \ A(x)\cdot \mathbf x\equiv 0, \ A(x)\equiv  A^t(x), \ \text{trace } A(x)\equiv 0\right\}.$$
To prove that that the assumptions
 of  Theorem \ref{main1} hold true we have to work with the equations
\begin{equation}
\label{eq-a-coh}
\begin{split}
A-
D\phi\in \mathcal N
\end{split}
\end{equation}
\begin{equation}
\label{eq-b-coh}
\begin{split}
A -D\phi + Q\in \mathcal N_{RM}, \ \ Q = -Q^t
\end{split}
\end{equation}
\begin{equation}
\label{eq-c-coh}
\begin{split}
A -D\phi + Q + h\cdot I\in \mathcal N_{CS}, \ Q = -Q^t
\end{split}
\end{equation}
with respect to
$\phi \in \an(\mathcal M_{n\times 1}), \ Q \in \an(\mathcal M_{n\times n}), \ h\in \an.$
The fact that the assumptions of Theorem \ref{main1} hold true follows from the following proposition.
\begin{prop}
\label{prop-estimates-three-objects}
Consider equations (\ref{eq-a-coh}), (\ref{eq-b-coh}), (\ref{eq-c-coh}) with $A\in \an^{(i)}(\mathcal M_{n\times n})$,
\ni \ $i\ge 2.$ In the case of equation (\ref{eq-c-coh}) assume that $n\ge 3$. Each of these equations has a unique
solution such that $D\phi\in  \an^{(i+1)}(\mathcal M_{n\times 1})$, \ $Q\in \an^{(i)}(\mathcal M_{n\times n})$, \
$h\in \an^{(i)}$
and for some $C>0$ which depends neither on $i$ nor on $A$  one has the estimates
$$\vert \vert \phi  \vert \vert < \frac{C}{i}\vert \vert A \vert \vert, \ \vert \vert R \vert \vert < C\vert \vert A \vert \vert , \ \vert \vert h \vert \vert < C\vert \vert A \vert \vert $$
where the norm $\vert \vert \cdot \vert \vert $ of a matrix whose entries are homogeneous functions is
the maximum of the norms of the entries and the norm of a homogeneous function is the sum of the absolute
values of its coefficients.
\end{prop}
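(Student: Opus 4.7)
The proof rests on Euler's identity: for $\phi$ a vector of homogeneous polynomials of degree $i+1$, one has $D\phi(x)\cdot\mathbf{x} = (i+1)\phi(x)$, and for a homogeneous polynomial $f$ of degree $d$, $\mathbf{x}\cdot\nabla f = df$. This will produce the critical factor $1/i$ in the estimate for $\phi$. In each case we use the algebraic conditions defining $\mathcal{N}$, $\mathcal{N}_{RM}$, $\mathcal{N}_{CS}$ other than $M\mathbf{x}\equiv 0$ (namely symmetry and, in (c), tracelessness) to eliminate the auxiliary unknowns $Q$ and $h$ in favor of $\phi$, then use $M\mathbf{x}\equiv 0$ together with Euler to solve for $\phi$. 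The explicit formulas give both existence and uniqueness.

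\textbf{Cases (a) and (b).} For (a) the equation reads $D\phi\cdot\mathbf{x} = A\mathbf{x}$, hence by Euler $\phi = A\mathbf{x}/(i+1)$ uniquely; since multiplication by a coordinate preserves the coefficient-sum norm, $\vert \vert (A\mathbf{x})_j\vert \vert \le n\vert \vert A\vert \vert$, and the estimate follows. For (b), the symmetry of $M = A - D\phi + Q$ with $Q = -Q^t$ yields $2Q = (A^t - A) + (D\phi - (D\phi)^t)$. Substituting into $M\mathbf{x} = 0$ and applying Euler to $D\phi\cdot\mathbf{x}$ gives $(i+1)\phi + (D\phi)^t\mathbf{x} = (A+A^t)\mathbf{x}$. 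With $g := \mathbf{x}\cdot\phi$ of degree $i+2$, the product rule gives $(D\phi)^t\mathbf{x} = \nabla g - \phi$, so $i\phi + \nabla g = (A+A^t)\mathbf{x}$. Dotting with $\mathbf{x}$ and applying Euler to $g$ yields $2(i+1)g = \mathbf{x}^t(A+A^t)\mathbf{x}$, which determines $g$; then $\phi$ is recovered from the vector equation and $Q$ from the symmetry formula. For the estimates we combine Euler with the elementary fact that differentiating a homogeneous polynomial of degree $d$ multiplies the norm by at most $d$: $\vert \vert g\vert \vert \le C\vert \vert A\vert \vert/i$ gives $\vert \vert \nabla g\vert \vert \le C\vert \vert A\vert \vert$, hence $\vert \vert \phi\vert \vert \le C\vert \vert A\vert \vert/i$; and $\vert \vert D\phi\vert \vert \le (i+1)\vert \vert \phi\vert \vert \le C\vert \vert A\vert \vert$ yields $\vert \vert Q\vert \vert \le C\vert \vert A\vert \vert$.

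\textbf{Case (c) and main obstacle.} The trace condition forces $h = (\operatorname{div}(\phi) - \operatorname{tr}(A))/n$ (since $\operatorname{tr}(Q) = 0$); the symmetry formula for $Q$ is the same as in (b). The equation $M\mathbf{x} = 0$ becomes $i\phi + \nabla g = (A+A^t)\mathbf{x} + 2h\mathbf{x}$. Dotting with $\mathbf{x}$ and separately taking divergence, then substituting for $\operatorname{div}(\phi)$ via the trace formula, leads to a single scalar equation for $h$ of the form $[-(n-2)i(i-1) - |\mathbf{x}|^2\Delta]h = R(A)$ with $\vert \vert R(A)\vert \vert = O(i^2\vert \vert A\vert \vert)$. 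The main obstacle is to invert this operator with norm $O(1/i^2)$. Using the orthogonal (with respect to the Fischer/Belitskii inner product) decomposition of homogeneous polynomials into spherical harmonic blocks $P_i = \bigoplus_k |\mathbf{x}|^{2k} H_{i-2k}$, the operator $|\mathbf{x}|^2\Delta$ acts as multiplication by $2k(n-2+2i-2k)$ on $|\mathbf{x}|^{2k} H_{i-2k}$, and for $n \ge 3$ every eigenvalue of the full operator has magnitude at least $(n-2)i(i-1)$, giving the desired inversion bound. Propagating back, $\vert \vert h\vert \vert \le C\vert \vert A\vert \vert$, $\vert \vert g\vert \vert \le C\vert \vert A\vert \vert/i$, $\vert \vert \phi\vert \vert \le C\vert \vert A\vert \vert/i$, and $\vert \vert Q\vert \vert \le C\vert \vert A\vert \vert$ as in (b). The restriction $n \ge 3$ reflects the classical fact that conformal structures on surfaces are locally flat: for $n = 2$ the kernel of the principal operator contains the infinite-dimensional space of homogeneous conformal Killing fields.
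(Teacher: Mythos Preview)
Your proof is correct and follows essentially the same route as the paper: Euler's identity gives the $1/i$ factor, the auxiliary unknowns $Q$ and $h$ are eliminated via the symmetry and trace conditions, the scalar function $g=\langle\phi,\mathbf{x}\rangle$ (the paper calls it $U$) is solved first by dotting with $\mathbf{x}$, and case~(c) reduces to inverting the operator $|\mathbf{x}|^2\Delta + (n-2)i(i-1)$ on degree-$i$ homogeneous polynomials. The only cosmetic difference is in the eigenvalue bound: you invoke the spherical-harmonic decomposition $P_i=\bigoplus_k |\mathbf{x}|^{2k}H_{i-2k}$ and read off the eigenvalues $2k(n-2+2i-2k)$ directly, whereas the paper peels off factors of $|\mathbf{x}|^2$ one at a time to reach the same conclusion that all eigenvalues dominate $(n-2)i(i-1)$ (after its normalization, $\ge 1/2$).
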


The rest of the section is devoted to the proof of this proposition. The proof is very simple for
equation (\ref{eq-a-coh}) and more involved for (\ref{eq-b-coh}) and (\ref{eq-c-coh}), especially for
(\ref{eq-c-coh}). Throughout the proof we will use the Euler vector field
$$E = x_1\pxone + \cdots + x_n\pxn .$$

\ni {\bf Proof of Proposition \ref{prop-estimates-three-objects} for equation (\ref{eq-a-coh})}.
 This equation can be written in the form
$A(x)\cdot \mathbf x  - \phi ^\prime (x) \cdot \mathbf x \equiv 0$
or equivalently $E(\phi (x)) = A(x)\cdot \mathbf x$ ($E(\phi (x))$ denotes the vector $(E(\phi_i(x))_{i=1,\ldots, n}$).
Since $A(x)\in \mathcal A_{n\times n}^{(i)}(x)$ this equation has unique solution
$\phi (x) = \frac{1}{i+1}A(x)\cdot \mathbf x \in \an^{(i+1)}(\mathcal M_{n\times 1})$ which
implies Proposition \ref{prop-estimates-three-objects} for equation (\ref{eq-a-coh}).

\medskip

\ni {\bf Proof of Proposition \ref{prop-estimates-three-objects} for equation (\ref{eq-b-coh})}.
This equation can be expressed in the form

\begin{equation*}
\begin{split}
\phi ^\prime (x) - \left(\phi ^\prime (x)\right)^t - 2Q(x) = A(x) - A^t(x)\\
\left(\phi ^\prime (x) - Q(x)\right)\cdot \mathbf x = A(x)\cdot \mathbf x .
\end{split}
\end{equation*}

\ni We can exclude $Q(x)$ from the first equation

\begin{equation}
\label{eq-for-Q}
Q(x) = \frac{1}{2}\left(\phi ^\prime (x) - \left(\phi ^\prime (x)\right)^t - A(x) + A^t(x)\right).
\end{equation}

\ni The second equation takes the form

\begin{equation}
\label{eq-for-phi-1}
\left(\phi ^\prime (x) + \left(\phi ^\prime (x)\right)^t\right)\cdot \mathbf x  =
\left(A(x) + A^t(x)\right)\cdot \mathbf x.
\end{equation}



\ni Introduce the function
\begin{equation}
\label{eq-U}
U(x) = \ <\phi (x), \mathbf x> \ \ \text{and} \ \ \nabla  U(x) = \begin{pmatrix}\frac{\partial U}{\partial x_1},& \cdots &, \frac{\partial U}{\partial x_n}\end{pmatrix}^t
\end{equation}
where $< \cdot >$ denoted the standard inner product of two $n$-tuples.
Note that
$$\phi ^\prime (x)\cdot \mathbf x = E(\phi (x)), \ \ \left(\phi ^\prime (x)\right)^t\cdot \mathbf x  = \nabla U(x) - \phi (x).$$
Therefore equation (\ref{eq-for-phi-1}) can be expressed in the form
\begin{equation}
\label{eq-for-phi-3}
\begin{split}
E(\phi ) - \phi  = - \nabla U(x) + \left(A(x) + A^t(x)\right)\cdot \mathbf x.
\end{split}
\end{equation}
Take the inner product of this equation with the vector $\mathbf x$. We obtain
\begin{equation}
\label{eq-phi-a}
\left<E(\phi (x))) , \mathbf x\right> - U(x) = - \left<\nabla U(x) , \mathbf x\right> +  \left<\left(A(x)+A^t(x)\right)\cdot \mathbf x , \mathbf x \right>
\end{equation}
Note that $\left<E(\phi ) , \mathbf x \right> = E(U(x)) - U(x)$ and  $\left<\nabla U(x) , \mathbf x\right> = E(U(x))$.  Therefore equation (\ref{eq-phi-a}) gives the following equation for $U(x)$:
\begin{equation}
\label{eq-for-U}
E(U(x)) - U(x) = \frac{1}{2}\left<\left(A(x)+A^t(x)\right)\cdot \mathbf x , \mathbf x\right>.
\end{equation}
\ni Since $A\in \an^{(i)}(\mathcal M_{n\times n})$ it follows $\left<\left(A(x)+A^t(x)\right)\cdot \mathbf x , \mathbf x\right> \in \an^{(i+2)}(\mathcal M_{1\times 1})$. Therefore equation (\ref{eq-for-U}) has unique solution
\begin{equation*}
U(x) = \frac{1}{2(i+1)}\left<\left(A(x)+A^t(x)\right)\cdot \mathbf x , \mathbf x\right>.
\end{equation*}
Returning to equations (\ref{eq-for-phi-3}) we obtain that $\phi (x)$ satisfies the equation
\begin{equation*}
\begin{split}
E(\phi (x) ) - \phi (x) = f(x)  -
\frac{1}{2(i+1)} \nabla \left<f(x) , \mathbf x \right>
\end{split}
\end{equation*}
where
\begin{equation}
\label{eq-f}
f(x) = \left(A(x) + A^t(x)\right)\cdot \mathbf x.
\end{equation}
\ni Since $f\in \an^{(i+1)}$ we see that this equations have unique solutions
\begin{equation}
\label{eq-final-phi}
\phi (x) = \frac{1}{i}f(x) -
\frac{1}{2\cdot i\cdot (i+1)} \nabla \left<f(x) , \mathbf x \right> .
\end{equation}
Thus equation (\ref{eq-b-coh}) has unique solution $\phi  \in \an^{(i+1)}(\mathcal M_{n\times 1}), \ \ Q\in \an^{(i)}(\mathcal M_{n\times n})$
where $\phi $ is defined by (\ref{eq-final-phi}) and $Q$ is defined by (\ref{eq-for-Q}).
The vector function $f$ is defined by (\ref{eq-f}). We see from (\ref{eq-f}) that
$\vert \vert f\vert \vert < C_1\vert \vert A\vert \vert $. Since  $\left<f(x) , \mathbf x \right>$
is a homogeneous degree $(i+2)$ function, it follows $\vert \vert \nabla \left<f(x) , \mathbf x \right> \vert \vert < C_2(i+2)\vert \vert A\vert \vert $. Here $C_1, C_2$ do not depend on $i$.
 Now we see from (\ref{eq-final-phi}) that
$\vert \vert \phi \vert \vert < \frac{C_3}{i} \vert \vert A\vert \vert $
and using this estimate we see from (\ref{eq-for-Q}) that
$\vert \vert Q\vert \vert < C_4 \vert \vert A\vert \vert $
for some $C_3, C_4$ which do not depend on $i$.

\medskip

{\bf Proof of Proposition \ref{prop-estimates-three-objects} for equation (\ref{eq-c-coh})}
This equation can be expressed in the form
\begin{equation*}
\begin{split}
\phi ^\prime (x) - \left(\phi ^\prime (x)\right)^t - 2Q(x) = A(x) - A^t(x)\\
\left(\phi ^\prime (x) - Q(x)\right)\cdot \mathbf x = A(x)\cdot \mathbf x + h(x)\cdot \mathbf x \\
\text{trace } \phi ^\prime (x) = \text{trace } A(x) + nh(x).
\end{split}
\end{equation*}
We can exclude $Q(x)$ from the first equation
\begin{equation}
\label{eq-for-Q-again}
Q(x) = \frac{1}{2}\left(\phi ^\prime (x) - \left(\phi ^\prime (x)\right)^t - A(x) + A^t(x)\right).
\end{equation}
Substituting to the second and the third equations we obtain the following system for $\phi (x)$ and $h(x)$:
\begin{equation}
\label{eq-CR-1}
\left(\phi ^\prime (x) + \left(\phi ^\prime (x)\right)^t\right)\cdot \mathbf x  =
\left(A(x) + A^t(x)\right)\cdot \mathbf x + 2h(x)\cdot \mathbf x.
\end{equation}
\begin{equation}
\label{eq-CR-2}
\text{trace } \phi ^\prime (x) = \text{trace } A(x) + nh(x).
\end{equation}
Our way of solving this system is as follows. We solve equation (\ref{eq-CR-1}) with respect to $\phi (x)$
in the same way as in the previous subsection. After that (\ref{eq-CR-2}) becomes an equation for
$h(x)$ only.
Thus we work with equation (\ref{eq-CR-1}). Introduce, as in the previous subsection, the function $U(x)$ by (\ref{eq-U}).
In the same way as in the previous section we obtain
\begin{equation}
\label{eq-CR-for-phi-3}
\begin{split}
E(\phi ) - \phi  = - \nabla U(x) + \left(A(x) + A^t(x)\right)\cdot \mathbf x + 2h(x)\cdot \mathbf x.
\end{split}
\end{equation}
Let
\begin{equation}
\label{eq-f-x}
f(x) = \left(A(x)+A^t(x)\right)\cdot \mathbf x.
\end{equation}
We have
\begin{equation}
\label{eq-norm-f}
\vert \vert f\vert \vert < C_1\vert \vert A\vert \vert
\end{equation}
for some $C_1$ which does not depend on $i$.
Taking the inner product of (\ref{eq-CR-for-phi-3}) with the vector $\mathbf x$ we obtain, like in the
previous subsection
\begin{equation*}
\label{eq-CR-for-U}
E(U(x)) - U(x) = \frac{1}{2}\left<f(x)\cdot \mathbf x , \mathbf x\right> + h(x)(x_1^2 + \cdots + x_n^2)
\end{equation*}
and it follows
$$U(x) = \frac{1}{2(i+1)}\left<f(x)\cdot \mathbf x\right> + \frac{1}{i+1}h(x)(x_1^2 + \cdots + x_n^2).$$
Introduce
\begin{equation}
\label{eq-g-x}
g(x) = \nabla \left(\left<f(x)\cdot \mathbf x\right>\right).
\end{equation}
From (\ref{eq-norm-f}) it follows
\begin{equation}
\label{eq-norm-g}
\vert \vert g\vert \vert < C_2\cdot i\cdot \vert \vert A\vert \vert
\end{equation}
for some $C_2$ which does not depend on $i$.
Returning to equation (\ref{eq-CR-for-phi-3}) we obtain
$$E(\phi ) - \phi = f(x) - \frac{1}{2(i+1)}g(x) - \frac{1}{i+1}\nabla \left(h(x)(x_1^2+\cdots + x_n^2)\right) + 2h(x)\cdot \mathbf x.$$
Since
$$\nabla \left(h(x)(x_1^2+\cdots + x_n^2)\right) = (x_1^2 + \cdots + x_n^2)\nabla h(x) + 2h(x)\cdot \mathbf x$$
\ni and $\phi \in \an^{(i+1)}(\mathcal M_{n\times 1})$ we obtain
\begin{equation}
\label{eq-CR-for-phi}
\phi (x) = \frac{1}{i}f(x) - \frac{1}{2i(i+1)}g(x) - \frac{1}{i(i+1)}(x_1^2 + \cdots + x_n^2)\nabla h(x) + \frac{2}{i+1}h\cdot \mathbf x.
\end{equation}
Let $\phi (x) = (\phi _1(x),..., \phi _n(x))^t$ and let
\begin{equation}
\label{eq-CR-s}
\begin{split}
s(x) =  \frac{1}{i}f(x) - \frac{1}{2i(i+1)}g(x) = (s_1(x), ..., s_n(x))^t\\
w(x) = \frac{\partial s_1(x)}{\partial x_1} + \cdots + \frac{\partial s_n(x)}{\partial x_n}.
\end{split}
\end{equation}
From (\ref{eq-norm-f}) and (\ref{eq-norm-g}) it follows
\begin{equation}
\label{eq-norm-w}
\vert \vert w\vert \vert < C_3\vert \vert A\vert \vert
\end{equation}
for some $C_3$ which does not depend on $i$.
We have, for each $k\in \{1,...,n\}$:
$$\frac{\partial \phi _k(x)}{\partial x_k} = \frac{\partial s_k(x)}{\partial x_k}- \frac{2}{i(i+1)}x_k\frac{\partial h}{\partial x_k} -
\frac{1}{i(i+1)}(x_1^2 + \cdots + x_n^2)\frac{\partial ^2h}{\partial x_k^2} + \frac{2}{i+1}x_k\frac{\partial h}{\partial x_k} + \frac{2}{i+1}h $$
and it follows
\begin{equation*}
\begin{split}
\text{trace } \phi ^\prime (x) = + w(x)-\frac{1}{i(i+1)}(x_1^2 + \cdots + x_n^2)\left(\frac{\partial ^2h}{\partial x_1^2} + \cdots + \frac{\partial ^2h}{\partial x_n^2}\right) + \\
+ \frac{2(i-1)}{i(i+1)}E(h(x)) + \frac{2n}{i+1}h(x).
\end{split}
\end{equation*}
Since $h(x)\in \an^{(i)}$ we have $E(h(x)) = ih(x)$ and consequently
$$\text{trace} \phi ^\prime (x) = -\frac{1}{i(i+1)}(x_1^2 + \cdots + x_n^2)\left(\frac{\partial ^2h}{\partial x_1^2} + \cdots + \frac{\partial ^2h}{\partial x_n^2}\right) + \frac{2(i+n-1)}{i+1}h(x) + w(x).$$
Now we return to equation (\ref{eq-CR-2}). Let
$$z :=  w - \text{trace } A \in \an^{(i)}.$$
\ni From (\ref{eq-norm-w}) we obtain
\begin{equation}
\label{eq-norm-z}
\vert \vert z\vert \vert < C_4\vert \vert A\vert \vert
\end{equation}
for some $C_4$ which does not depend on $i$.
Equation (\ref{eq-CR-2}) takes the form
\begin{equation}
\label{eq-CR-for-h}
\begin{split}
\frac{1}{i(i+1)}(x_1^2 + \cdots + x_n^2)\left(\frac{\partial ^2h(x)}{\partial x_1^2} + \cdots + \frac{\partial ^2h(x)}{\partial x_n^2}\right) + \frac{(n-2)(i-1)}{i+1}h(x) = z(x)
\end{split}
\end{equation}
Now we need the following statement.

\begin{lem}
\label{prop-CR-eq-h}
Let $n\ge 3$ and $i\ge 2$. For any given $z\in \an^{(i)}$, equation (\ref{eq-CR-for-h}) has a unique solution $h\in \an^{(i)}$ and one has $\vert \vert h\vert \vert < C\vert \vert z\vert \vert $ where the constant $C$ does not depend on $i$.
\end{lem}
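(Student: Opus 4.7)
The plan is to recognize the left-hand side of $(\ref{eq-CR-for-h})$ as coming from the $O(n)$-equivariant operator
\[
L := \frac{r^2 \Delta}{i(i+1)} + \frac{(n-2)(i-1)}{i+1}\, I
\]
acting on $\an^{(i)}$, where $r^2 := x_1^2+\cdots+x_n^2$ and $\Delta := \sum_{j=1}^n \partial^2/\partial x_j^2$. First I would diagonalize $L$ by the classical Fischer decomposition $\an^{(i)} = \bigoplus_{k=0}^{\lfloor i/2\rfloor} r^{2k}\, \cH_{i-2k}$, where $\cH_m$ denotes the space of homogeneous harmonic polynomials of degree $m$. Using the identity $\Delta(r^{2k}Y) = 2k(2k+2m+n-2)\, r^{2k-2}Y$ for $Y \in \cH_m$ and $m=i-2k$, a direct calculation shows that $L$ acts on $r^{2k}\cH_{i-2k}$ as the scalar
\[
\lambda_{i,k} := \frac{2k(2i-2k+n-2)}{i(i+1)} + \frac{(n-2)(i-1)}{i+1}.
\]
For $n\geq 3$, $i\geq 2$ and $0\leq k\leq \lfloor i/2\rfloor$ both summands are nonnegative, and the second is at least $(n-2)/3$, so $\lambda_{i,k}\geq (n-2)/3>0$ uniformly in $i,k$. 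Existence and uniqueness of $h\in \an^{(i)}$ solving $Lh=z$ follow immediately.

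For the norm bound, the key idea is self-adjointness of $L$. By Proposition \ref{prop-Bel-inner-product}, $\partial_j$ is adjoint to multiplication by $x_j$ in the Belitskii inner product of section \ref{sec-Bel-inner-product}, so $\Delta = \sum\partial_j^2$ and $r^2 = \sum x_j^2$ are mutual adjoints, and therefore $(r^2\Delta)^* = r^2\Delta$. Consequently, in the (modified) Belitskii norm, the operator norm of $L^{-1}$ equals $1/\min_k \lambda_{i,k} \leq 3/(n-2)$, independently of $i$. I would then appeal to Remark \ref{belit-lemma}: the main result, Theorem \ref{main1}, remains valid when the norm of Definition \ref{def-norms} is replaced by the modified Belitskii norm throughout. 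Hence the entire chain of estimates in Proposition \ref{prop-estimates-three-objects} and in the deduction of Theorem \ref{thm-anal-nf-3-objects} may be performed in the modified Belitskii norm, in which the displayed spectral bound is exactly what the lemma requires.

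The main obstacle is precisely this norm-matching. Self-adjointness and the spectral bound live naturally in the Belitskii inner product, but the Belitskii (or modified Belitskii) norm is not uniformly equivalent on $\an^{(i)}$ to the norm of Definition \ref{def-norms} as $i$ grows. A direct Neumann-series attempt in the latter norm, writing $L=\mu_i I+ r^2\Delta/(i(i+1))$ with $\mu_i = (n-2)(i-1)/(i+1)$, breaks down because the operator norm of the perturbation is of order $n/(n-2)>1$ for $n\geq 3$, even while its spectrum stays comfortably inside the convergence radius. The cleanest resolution is the one proposed above: pass to the modified Belitskii framework, as explicitly permitted by Remark \ref{belit-lemma}.
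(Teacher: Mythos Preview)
Your proposal is correct and follows essentially the same approach as the paper. The paper also observes that $L_i$ is self-adjoint in the Belitskii inner product and reduces the lemma to showing the eigenvalues are bounded below by a positive constant independent of $i$; it then computes the eigenvalues by an iterative peeling argument (if $\Delta h\neq 0$ then $h=r^2\tilde h$, etc.), which is exactly the Fischer decomposition you invoke, arriving at the same $\lambda\geq\lambda_1=\frac{(n-2)(i-1)}{i+1}$. Your treatment is in fact more careful on one point the paper leaves implicit: the spectral bound lives in the Belitskii norm, not the $\ell^1$-type norm of Definition~\ref{def-norms}, and you correctly resolve this by invoking the modified Belitskii framework of Remark~\ref{belit-lemma} (cf.\ also the remark following Definition~\ref{def-norms}), whereas the paper simply asserts that self-adjointness ``reduces the lemma'' to the eigenvalue bound.
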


\medskip

\ni This proposition is proved below. Proposition \ref{prop-estimates-three-objects} for equation (\ref{eq-c-coh}) and  $i\ge 2$ is a direct corollary of Proposition \ref{prop-CR-eq-h}, the formula (\ref{eq-CR-for-phi}) expressing $\phi $ in terns of $h$, the formula (\ref{eq-for-Q-again}) expressing $Q$ in terms of $\phi $, and the
estimates (\ref{eq-norm-z}), (\ref{eq-norm-f}), (\ref{eq-norm-g}). Indeed, $w$ is known from $A$, so is $z$ with estimate $(\ref{eq-norm-z})$. According to the previous proposition, $h$ solves $(\ref{eq-CR-for-h})$ with estimate $\|h\|\leq \tilde C \|A\|$. Since $s$ is known from $A$, so is $\phi$ ($\ref{eq-CR-for-phi}$) and it satisfies $\|\phi\|\leq \frac{C}{i}\|A\|$. Finally, $Q$ is known from $A$ and satisfies $\|Q\|\leq  \|A\|$ and so does $R$.
To complete the proof of Proposition \ref{prop-estimates-three-objects} it remains to prove
Lemma \ref{prop-CR-eq-h}.

\medskip

{\bf Proof of Lemma \ref{prop-CR-eq-h}}
Consider the linear operator
$L_i: \ \an^{(i)}\to \an^{(i)}$,
$$L_i(h) = \frac{1}{i(i+1)}(x_1^2 + \cdots + x_n^2)\left(\frac{\partial ^2h(x)}{\partial x_1^2} + \cdots + \frac{\partial ^2h(x)}{\partial x_n^2}\right) + \frac{(n-2)(i-1)}{i+1}h(x).$$
The key point is that the operator $L_i$ is selfadjoint with respect to Belitskii scalar product.
It is easy to see that this property of $L_i$ reduces the lemma to the following statement:
the eigenvalues of $L_i$
are bigger than a positive constant which does not depend on $i$.
We will prove that all eigenvalues of $L_i$ are bigger than $1/2$.

\medskip

Consider any eigenvector  $h(x)\in \mathcal M_{1\times 1}^{(i)}$ of $L_i$ corresponding to
eigenvalue $\lambda $:
$$\frac{1}{i(i+1)}(x_1^2 + \cdots + x_n^2)\left(\frac{\partial ^2h(x)}{\partial x_1^2} + \cdots + \frac{\partial ^2h(x)}{\partial x_n^2}\right) = \left(\lambda - \lambda _1\right)h(x).$$
$$\lambda _1 = \frac{(n-2)(i-1)}{i+1}.$$
If
$\frac{\partial ^2h(x)}{\partial x_1^2} + \cdots + \frac{\partial ^2h(x)}{\partial x_n^2} =0$
then
$\lambda = \lambda _1.$
If $\frac{\partial ^2h(x)}{\partial x_1^2} + \cdots + \frac{\partial ^2h(x)}{\partial x_n^2}\ne 0$
then $h(x)$ must have the form
$h(x) = (x_1^2 + \cdots + x_n^2)\widetilde h(x), \ \ \widetilde h(x)\in \mathcal M_{1\times 1}^{(i-2)}$ and it is easy
to compute that $\widetilde h(x)$ satisfies the equation
$$ \frac{1}{i(i+1)}(x_1^2 + \cdots + x_n^2)\left(\frac{\partial ^2\widetilde h(x)}{\partial x_1^2} + \cdots + \frac{\partial ^2\widetilde h(x)}{\partial x_n^2}\right) = \left(\lambda - \lambda _1 - \lambda _2\right)\widetilde h(x),$$
$$\lambda _2 = \frac{4(i-2)+2n}{i(i+1)}. $$
 If $\frac{\partial ^2\widetilde h(x)}{\partial x_1^2} + \cdots + \frac{\partial ^2\widetilde h(x)}{\partial x_n^2}= 0$ then
$\lambda = \lambda _1 + \lambda _2 > \lambda _1.$
 If $\frac{\partial ^2\widetilde h(x)}{\partial x_1^2} + \cdots + \frac{\partial ^2\widetilde h(x)}{\partial x_n^2}\ne 0$ then $\widetilde h(x)$ must have the form
$$\widetilde h(x) = (x_1^2 + \cdots + x_n^2)\widehat h(x), \ \ \widehat h(x)\in \mathcal M_{1\times 1}^{(i-4)}.$$
In this case $\widehat h(x)$ satisfy the equation
$$ \frac{1}{i(i+1)}(x_1^2 + \cdots + x_n^2)\left(\frac{\partial ^2\widehat h(x)}{\partial x_1^2} + \cdots + \frac{\partial ^2\widehat h(x)}{\partial x_n^2}\right) = \left(\lambda - \lambda _1 - \lambda _2-\lambda _3\right)\widehat h(x),$$
$$\lambda _3 = \frac{4(i-4)+2n}{i(i+1)}. $$
Continuing in the same way we come to conclusion
$\lambda \ge \lambda _1$
for any eigenvalue $\lambda $ of $L_i$. Since $n\ge 3$ and $i\ge 2$ we have
$\lambda \ge \frac{1}{2}$
for any eigenvalue $\lambda $ of $L_i$.

\end{appendices}

\bibliography{newsumbib}

\def\cprime{$'$} \def\cprime{$'$}

\end{document}